\documentclass[11pt,a4paper]{article}
\usepackage{indentfirst}
\usepackage{amsfonts}
\usepackage{amssymb}
\usepackage{mathrsfs}
\usepackage{amsmath}
\usepackage{amsthm}
\usepackage{enumerate}
\usepackage{cite}
\usepackage{mathrsfs}
\allowdisplaybreaks
\usepackage{geometry}
\usepackage{ifpdf}
\ifpdf
  \usepackage[colorlinks=true,linkcolor=blue,citecolor=red, final,backref=page,hyperindex]{hyperref}
\else
  \usepackage[colorlinks,final,backref=page,hyperindex,hypertex]{hyperref}
\fi

\usepackage{txfonts}
\usepackage{indentfirst}
\usepackage{latexsym}



\hoffset=0cm
\oddsidemargin=0pt
\marginparsep=0cm
\marginparwidth=0cm
\voffset=-1.5cm

\textheight=23cm
\textwidth=16cm

\def\<{\langle}
\def\>{\rangle}


\newtheorem{thm}{Theorem}[section]
\newtheorem{lem}[thm]{Lemma}
\newtheorem{cor}[thm]{Corollary}
\newtheorem{pro}[thm]{Proposition}
\newtheorem{ex}[thm]{Example}

\newtheorem{pdef}[thm]{Proposition-Definition}

\theoremstyle{definition}
\newtheorem{defi}{Definition}[section]

\theoremstyle{remark}
\newtheorem{rmk}{Remark}[section]
\begin{document}
\title{\bf On $n$-Hom-pre-Lie superalgebras structures and their representations }
\author{\bf Othmen Ncib, Sihem Sendi}
\author{{ Othmen Ncib$^{1}$
 \footnote {  E-mail: othmenncib@yahoo.fr}
,\  Sihem Sendi$^{1}$
    \footnote {E-mail:  sihemsendi995@gmail.com}
}\\
{\small 1.  University of Gafsa, Faculty of Sciences Gafsa, 2112 Gafsa, Tunisia}}
\date{}
\maketitle

\begin{abstract}
 
 In this paper, we introduce the notion of $n$-Hom-pre-Lie superalgebras. We investigate the representation theory of $n$-Hom-pre-Lie superalgebras and we give some related results and structures based on Rota-Baxter operators, $\mathcal{O}$-operators and Nijenhuis operators. Moreover, we study relationships between $n$-Hom-pre-Lie
superalgebras and its induced Hom-pre-Lie superalgebras. The same procedure is applied for
the representations of $n$- Hom-pre-Lie superalgebras.
 
\end{abstract}

\textbf{Key words}:Hom-pre-Lie superalgebras, $n$-Hom-pre-Lie superalgebras, representation, $\mathcal O$-operators.

\textbf{Mathematics Subject Classification}: 17A42, 17B10. 

\tableofcontents

\numberwithin{equation}{section}
\section{Introduction}

The notion of $n$-Lie algebras was introduced in $1985$ by Filippov \cite{V-T-Filippov}, so it was given a classification of the $(n+1)$-dimensional $n$-Lie algebras  over an algebraically closed field of characteristic zero. The
structure of $n$-Lie algebras is very different from that of Lie algebras due to the $n$-ary
multilinear operations involved. The $n = 3$ case, i.e. $3$-ary multilinear operation, first
appeared in Nambu's work \cite{Y-Nambu} in the description of simultaneous classical dynamics of three particles. Hom-(super) generalization of this structures called $n$-Hom-Lie (super)algebras was studied in \cite{Ataguema-Makhlouf-Silvestrov,Yau-D,Ammar-Mabrouk-Makhlouf,Abdaoui-Mabrouk-Makhlouf1 } (see also \cite{Mabrouk-Ncib}, for more details). \\

Pre-Lie algebras (called also left-symmetric algebras, Vinberg algebras, quasi associative
algebras) are a class of a natural algebraic systems appearing in many
fields in mathematics and mathematical physics. They were first mentioned by Cayley
in 1890 \cite{Cayley-A} as a kind of rooted tree algebra and later arose again from the study
of convex homogeneous cones \cite{Vinberg-E-B}, affine manifold and affine structures on Lie
groups \cite{Koszul-J-L}, and deformation of associative algebras \cite{Gerstenhaber-M}. They play an important
role in the study of symplectic and complex structures on Lie groups and Lie algebras
\cite{Andrada,Chu,Dardi1,Dardi2,Lichnerowicz}, phases spaces of Lie algebras \cite{Bai1,Kupershmidt2}, certain integrable systems \cite{Bordemann},
classical and quantum Yang–Baxter equations \cite{Diata-Medina}, combinatorics \cite{Ebrahimi1}, quantum field
theory \cite{Connes-Kreimer} and operads \cite{Chapoton-Livernet}. See \cite{Bakalov-Kac},
and the survey \cite{Burde} and the references therein for more details. Recently, pre-Lie superalgebras, the $\mathbb{Z}_2$-graded version of pre-Lie algebras also appeared in many others fields; see, for example, \cite{Abdaoui-Mabrouk-Makhlouf0,Chapoton-Livernet,Gerstenhaber-M,Mikhalev}. Classifications of complexes pre-Lie superalgebras in dimensions two and three have been given recently, by Zhang and Bai \cite{Bai-Zhang}. See
\cite{Aguiar-Loday,Chen-Li,Kong-Chen-Bai,Kong-Bai} about further results. The notion of Hom-pre-Lie algebras is a twisted analog of pre-Lie algebras, where the pre-Lie algebra identity is twisted by a self linear map, called the structure map. This notion was introduced in \cite{Makhlouf-Silvestrov}. There is a close relationship between Hom-pre-
Lie algebras and Hom-Lie algebras: a Hom-pre-Lie algebra $(\mathcal{A},\circ,\alpha)$ gives rise
to a Hom-Lie algebra $(\mathcal{A},[\cdot,\cdot]^C,\alpha)$ via the commutator bracket, which is called the
subadjacent Hom-Lie algebra and denoted by $\mathcal{A}^C$. Hom-pre-Lie algebras play several roles, among them and the most important are the problems related to the representations of the Hom-Lie algebras. We can explain this in terms that the map $L:\mathcal{A}\to \mathfrak{gl}(\mathcal{A})$, defined by $L_x(y) = x\circ y$ for all $x, y\in\mathcal{A}$, gives rise to a representation of the subadjacent
Hom-Lie algebra $\mathcal{A}^C$ on $\mathcal{A}$ with respect to $\alpha\in\mathfrak{gl}(\mathcal{A})$. On the other hand, Hom-pre-Lie algebras play an important
role in the construction of Hom-Lie $2$-algebras \cite{Y-Sheng-Chen}. Recently, Hom-pre-Lie algebras were studied from several aspects: The geometrization of Hom-pre-Lie algebras was
studied in \cite{Zhang-Yu-Wang}, universal $\alpha$-central extensions of Hom-pre-Lie algebras were studied
in \cite{Sun-Chen-Zhou} and the bialgebra theory of Hom-pre-Lie algebras was studied in \cite{Sun-H-Li}. Some generalizations of pre-lie algebra have been studied, among which are given in \cite{Ming-Liu-Y-Ma}, as the authors introduce the notion of
$n$-pre-Lie algebra, which gives a $n$-Lie algebra naturally and its left multiplication operator gives rise to
a representation of this $n$-Lie algebra. An $n$-pre-Lie algebra can also be obtained through the action of a relative Rota-Baxter operator on an $n$-Lie algebra. For $(n = 3)$, see \cite{Bai-G-Sheng,Chtioui} for more details. In \cite{Shuangjian-Zhang-Shengxiang}, the authors describe the symplectic structures and phase spaces of $3$-Hom–Lie algebras from the structures of $3$-Hom–pre-Lie algebras. The same work has been introduced in the general case of n-pre-Lie algebras\cite{Hajjaji-Chtioui-Mabrouk-Makhlouf}.\\

Representations theory of different algebraic structures is an important subject of study in algebra and diverse areas. They appear in many fields of mathematics and physics. In particular, they appear in deformation and cohomology theory among other areas. In this paper, we have to talk about the representation of algebraic structures of Lie and Hom-Lie type which are introduced in several works. The notion of representation introduced for Hom–Lie algebras in\cite{Y-Sheng1}; see also\cite{Benayadi-Makhlouf}. The extension of this work to the $n$-ary and $n$-ary-Hom-super cases has been given in \cite{Ammar-Mabrouk-Makhlouf,Abdaoui-Mabrouk-Makhlouf1}. Some other extensions are given in several works such as the representation of Hom-Lie superalgebras, BiHom-Lie superalgebras, (Hom)-Lie Rinehart (super)algebras, ($n$)-(Hom)-Poisson (super)algebras... (see \cite{Luca-Vitagliano,Ncib,Hassine-Chtioui-Mabrouk-Silvestrov,Juan-Chen-Yongsheng-Cheng}, for more details). In this paper we base on the representation theory of Hom-pre-Lie algebras. This notion was introduced in \cite{Sun-H-Li} in the study of Hom-pre-Lie bialgebras (Hom-left-symmetric bialgebras). In \cite{Liu-Song-Tang}, the authors gave the natural formula of a dual representation, which is nontrivial and showed that there is well defined
tensor product of two representations of a Hom-pre-Lie algebra. A generalization of the notion of representation of pre-Lie algebras was introduced in \cite{Hajjaji-Chtioui-Mabrouk-Makhlouf}, in which the authors defined the representation of $n$-pre-Lie algebras and gave some other associated results. In this paper, we generalize this notion in the Hom-super case and we give some related results.\\

This paper is organized as follows. In Section $2$, we recall some definitions and known
results about $n$-Lie superalgebras and $n$-Hom-Lie superalgebras. We
also recall some examples for these structures. In Section $3$, we introduce the notion of $n$-Hom-pre-Lie superalgebras and their representations and we give some important results and related structures. In Section $4$, we provide a construction procedure of $n$-Hom-pre-Lie superalgebras starting from a Hom-pre-Lie superalgebra and an even $(n-2)$-linear form satisfies specific conditions. Moreover, we applied same procedure for the representations of $n$-Hom-pre-Lie superalgebras.\\

Throughout this paper, we will for simplicity of exposition assume that $\mathbb{K}$ is an algebraically closed
field of characteristic zero, even though for most of the general definitions and results in the paper this
assumption is not essential.

 A vector space $V$ is said to be a $\mathbb{Z}_2$-graded if we are given a family $(V_i)_{i\in\mathbb{Z}_2}$ of vector subspace of $V$ such that $V=V_0\oplus V_1.$  The symbol $|x|$ always implies that $x$ is a
$\mathbb{Z}_2$-homogeneous element and $|x|$ is the $\mathbb{Z}_2$-degree. In the sequel, we will denote by $\mathcal{H(A)}$ the set of all homogeneous elements of $\mathcal{A}$ and $\mathcal{H}( \mathcal{A}^n)$ refers to the set of tuples with homogeneous elements.\\

\textbf{Notations:} For any $X=(x_1,\cdots,x_n)\in\mathcal{H}( \mathcal{A}^n)$, we need the following notations
$$|X|=\displaystyle\sum_{k=1}^n|x_k|,\;\;|X|_i=\displaystyle\sum_{k=i}^n|x_k|,\;\;|X|^i=\displaystyle\sum_{k=1}^i|x_k|\;\;\text{and}\;\;|X|^j_i=\displaystyle\sum_{k=i}^j|x_k|.$$
 
\section{Preliminaries and Basics}
\label{sec:bas}

In this section, we give some preliminaries and basic results on $n$-(Hom)-Lie superalgebras and $n$-(Hom)-pre-Lie superalgebras.
\begin{defi}
  An $n$-Lie superalgebra is a pair $(\mathcal{N},[\cdot,\cdots,\cdot])$ consisting of a $\mathbb{Z}_2$-graded vector space
$\mathcal{N} = \mathcal{N}_{\overline{0}}\oplus\mathcal{N}_{\overline{1}}$ and a multilinear map $[\cdot,\cdots,\cdot] : \underbrace{\mathcal{N} \times \mathcal{N} \times\cdots \times\mathcal{N}}_{n\; times} \to \mathcal{N}$, satisfying
\begin{align}
    |[x_1,\cdots,x_n]|&=|X_n|,\label{crochet-parity}\\
    |x_1,\cdots,x_i,x_{i+1},\cdots,x_n|&=-(-1)^{|x_i||x_{i+1}|}|x_1,\cdots,x_{i+1},x_i,\cdots,x_n|,\label{SuperSkewSym}\\
    \big[x_1,\dots,x_{n-1},[y_1,\dots,y_n]\big]&= \sum_{i=1}^{n}(-1)^{|X|^{n-1}|Y|^{i-1}}\big[y_1,\dots,y_{i-1},[x_1,\dots,x_{n-1},y_i],y_{i+1},\cdots,y_n\big],\label{Nambu-identity}
\end{align}
for any $x_i,y_j\in\mathcal{H}(\mathcal{N}),\;1\leq i,j\leq n$.
\end{defi}

\begin{rmk}.
\begin{enumerate}
    \item When $\mathcal{N}_{\overline{1}}=\{0\}$,  then $\mathcal{N}$ is actually an $n$-Lie algebra.
    \item The condition \eqref{SuperSkewSym} is equivalent to
 \begin{align}\label{SuperSkewSym1}
&\left[x_1,\dots,x_i,\dots,x_j,\dots,x_n \right]=-(-1)^{|X|^{j-1}_{i+1}(|x_i|+|x_j|)+|x_i||x_j|}  \left[x_1,\dots,x_j,\dots,x_i,\dots,x_n\right],\;\forall\;1\leq i<j\leq n.
 \end{align}
\end{enumerate}
\end{rmk}

\begin{ex}
Let $\mathcal{N}=\mathcal{N}_{\overline{0}}\oplus\mathcal{N}_{\overline{1}}$ be an $(n+1)$-dimensional $\mathbb{Z}_2$-vector space, where $\mathcal{N}_{\overline{0}}=<e_1,\cdots,e_n>$ and $\mathcal{N}_{\overline{1}}=<e_{n+1}>$. Define the even super-skew-symmetric $n$-linear map $[\cdot,\cdots,\cdot]:\wedge^n\mathcal{N}\to\mathcal{N}$  by
$$[e_1,\cdots,\hat{e_i},\cdots,e_n,e_{n+1}]=e_{n+1},\;\forall 1\leq i\leq n,$$
where $\hat{e_i}$ means that the element $e_i$ is omitted. 
Then $(\mathcal{N},[\cdot,\cdots,\cdot])$ is an $n$-Lie superalgebra.

\end{ex}

\begin{defi}
 An $n$-Hom-Lie superalgebra is a tuple $(\mathcal{N},[\cdot,\cdots,\cdot],\alpha_1,\cdots,\alpha_{n-1})$ consisting of a $\mathbb{Z}_2$-graded vector space
$\mathcal{N} = \mathcal{N}_{\overline{0}}\oplus\mathcal{N}_{\overline{1}}$, a multilinear map $[\cdot,\cdots,\cdot] : \underbrace{\mathcal{N} \times \mathcal{N} \times\cdots \times\mathcal{N}}_{n\; times} \to \mathcal{N}$ and a family $(\alpha_i)_{1\leq i\leq n-1}$ of even linear maps $\alpha_i:\mathcal{N}\to\mathcal{N}$  such that the conditions \eqref{crochet-parity} and \eqref{SuperSkewSym} are satisfied and for all $X=(x_1,\cdots,x_{n-1}),Y=(y_1,\cdots,y_n)\in\mathcal{H}(\mathcal{N})$, we have
\small{\begin{equation}\label{Hom-Nambu-identity}
    \big[\alpha_1(x_1),\dots,\alpha_{n-1}(x_{n-1}),[y_1,\dots,y_n]\big]= \sum_{i=1}^{n}(-1)^{|X|^{n-1}|Y|^{i-1}}\big[\alpha_1(y_1),\dots,\alpha_{i-1}(y_{i-1}),[x_1,\dots,x_{n-1},y_i],\alpha_{i+1}(y_{i+1}),\cdots,\alpha_{n-1}(y_n)\big].
\end{equation}}

\end{defi}

We also see that if $\alpha_1=\cdots=\alpha_{n-1} = id_{\mathcal{N}}$, then $\mathcal{N}$ is just an $n$-Lie superalgebra.
\begin{defi}\rm
An \emph{n-Hom Lie superalgebra} $(N,[\cdot,\cdots,\cdot],\alpha_{1},\cdots,\alpha_{n-1})$ is \emph{multiplicative}, if
$(\alpha_{i})_{1\leq i\leq n-1}$ with $\alpha_{1}=\cdots=\alpha_{n-1}=\alpha$ and satisfying

$$\alpha([x_{1},\cdots,x_{n}])=[\alpha(x_{1}),\cdots,\alpha(x_{n})],$$
\begin{equation}\label{254}
[\alpha(x_{1}),\cdots,\alpha(x_{n-1}),[y_{1},\cdots,y_{n}]]
=\sum^{n}_{i=1}(-1)^{|X|^{n-1}|Y|^{i-1}}
[\alpha(y_{1}),\cdots,\alpha(y_{i-1}),[x_{1},\cdots,x_{n-1},y_{i}],
\alpha(y_{i+1}),\cdots,\alpha(y_{n})],
\end{equation}
for any $x_{i},y_{i} \in\mathcal{H}(\mathcal{N})$.
\end{defi}

Furthermore, if $\alpha$ is bijective then the $n$-Hom-Lie superalgebra $(\mathcal{N},[\cdot ,\cdots,\cdot ], \alpha)$ is called a regular $n$-Hom-Lie superalgebra.\\
For convenience, from now on, we always assume that $(\mathcal{N},[\cdot,\cdots,\cdot],\alpha)$ is a multiplicative n-Hom Lie superalgebra over $\mathbb{K}$ unless otherwise stated.

\begin{ex}
Let $\mathcal{N}=\mathcal{N}_{\overline{0}}\oplus\mathcal{N}_{\overline{1}}$ be an $(n+1)$-dimensional $\mathbb{Z}_2$-vector space, where $\mathcal{N}_{\overline{0}}=<e_1,\cdots,e_n>$ and $\mathcal{N}_{\overline{1}}=<e_{n+1}>$. Define the even super-skew-symmetric $n$-linear map $[\cdot,\cdots,\cdot]:\wedge^n\mathcal{N}\to\mathcal{N}$  by
$$[e_1,\cdots,\hat{e_i},\cdots,e_n,e_{n+1}]=(-1)^{i+1}e_{n+1},\;\forall 1\leq i\leq n,$$
where $\hat{e_i}$ means that the element $e_i$ is omitted
and the even linear map $\alpha:\mathcal{N}\to\mathcal{N}$ defines on the basis of $\mathcal{N}$ by
$$\alpha(e_i)=e_i,\;1\leq i\leq n\;\;\text{and}\;\;\alpha(e_{n+1})=0.$$
Then $(\mathcal{N},[\cdot,\cdots,\cdot],\alpha)$ is a multiplicative $n$-Hom-Lie superalgebra.
\end{ex}

\begin{defi}\label{defi:rep-n-Hom-Lie-sup}
A representation of an $n$-Hom-Lie superalgebra $(\mathcal{A},[\cdot,\cdots,\cdot],\alpha)$ is a triple $(V,\rho,\alpha_V)$ consisting of a
$\mathbb{Z}_2$-graded vector space $V$, an even skew-symmetric multilinear map $\rho:\mathcal{A}^{n-1}\to gl(V)$ and an even linear map $\alpha_V:V\to V$
such that for all $x_1,\cdots,x_{n-1},y_1,\cdots,y_n\in\mathcal{H}(\mathcal{A})$, we have
\begin{align}
&\rho(\alpha(x_1),\cdots,\alpha(x_{n-1}))\alpha_V=\alpha_V\rho(x_1,\cdots,x_{n-1}),\label{compatible-rho-alpha}\\
    &\rho(\alpha(x_1),\cdots,\alpha(x_{n-1}))\rho(y_1,\cdots,y_{n-1})-(-1)^{|X|^{n-1}|Y|^{n-1}}
    \rho(\alpha(y_1),\cdots,\alpha(y_{n-1}))\rho(x_1,\cdots,x_{n-1})\nonumber\\&=\displaystyle\sum_{i=1}^{n-1}(-1)^{|X|^{n-1}|Y|^{i-1}}
    \rho(\alpha(y_1),\cdots,\alpha(y_{i-1}),[x_1,\cdots,x_{n-1},y_i],\alpha(y_{i+1}),\cdots\alpha(y_{n-1}))\alpha_V,\label{repr-n-Hom-Lie1}\\&
    \rho(\alpha(x_1),\cdots,\rho(x_{n-2}),[y_1,\cdots,y_n])\alpha_V\nonumber\\&=
    \displaystyle\sum_{i=1}^{n}(-1)^{n-i}(-1)^{|X|^{n-2}(|Y|+|y_i|)+|y_i||Y|_{i+1}}\rho(\alpha(y_1),\cdots,\hat{y_i},\cdots,\alpha(y_n))\rho(x_1,\cdots,x_{n-2},y_i)\label{repr-n-Hom-Lie2}.
\end{align}
\end{defi}
\begin{ex}\label{alpha-s-adj-rep}
Defining for any integer $s\geq0$ the $\alpha^s$-adjoint representation of an $n$-Hom-Lie superalgebra $(\mathcal{N},[\cdot,\cdots,\cdot],\alpha)$ on $\mathcal{N}^{\otimes n-1}$ as follows
$$ad^s_{x_1,\cdots,x_{n-1}}(x)=[\alpha^s(x_1),\cdots,\alpha^s(x_{n-1}),x]\;\;\text{for all}\;x_i,x\in\mathcal{H}(\mathcal{N}),\;1\leq i\leq n-1.$$
Let us denote the $\alpha^s$-adjoint representation of the $n$-Hom-Lie superalgebra $(\mathcal{N},[\cdot,\cdots ,\cdot ],\alpha)$ by the triple $(\mathcal{N},ad^s,\alpha)$. We also denote $ad^0_{x_1,\cdots,x_{n-1}}$ simply by $ad_{x_1,\cdots,x_{n-1}}$ for any
$x_1,\cdots,x_{n-1}\in\mathcal{H}(\mathcal{N})$.
\end{ex}
\begin{ex}\label{dual-representation}
Let $(\mathcal{N},[\cdot,\cdots,\cdot],\alpha)$ be a regular $n$-Hom-Lie superalgebra and $(V,\rho,\alpha_V)$ be an $n$-Hom-Lie superalgebra representation with $\alpha_V$ being an invertible linear map. Define $\rho^*:\mathcal{N}^{n-1}\to End(V^*)$ as usual by
$$<\rho^*(x_1,\cdots,x_{n-1})(\xi),u>=-<\xi,\rho(x_1,\cdots,x_{n-1})(u)>,\;\forall x_i\in\mathcal{H}(\mathcal{N}),1\leq i\leq n-1,\;u\in\mathcal{H}(V),\;\xi\in V^*.$$
However, in general $\rho^*$ is not a representation of $\mathcal{N}$ anymore. 
Let us define the map $\rho^\star:\mathcal{N}^{n-1}\to End(V^\ast)$ by
\begin{align}\label{dual-representation}
    <\rho^\star(x_1,\cdots,x_{n-1})(\xi),v>:&=<\rho^\ast(\alpha(x_1),\cdots,\alpha(x_{n-1}))((\alpha_V^{-2})^{\ast}(\xi)),v>\\\nonumber&=-<\xi,\rho(\alpha^{-1}(x_1),\cdots,\alpha^{-1}(x_{n-1}))(\alpha_V^{-2}(v))>,
\end{align}
for all $\xi\in V^\ast,\;x_1,\cdots,x_{n-1}\in\mathcal{H}(\mathcal{N})$ and $v\in V$. Then, the triple $(V^\ast,\rho^\star,(\alpha_V^{-1})^*)$  is a representation of the $n$-Hom-Lie superalgebra $(\mathcal{N},[\cdot,\cdots,\cdot],\alpha)$ on the dual vector space $V^\ast$ with
respect to the map $(\alpha_V^{-1})^\ast$. This is also known as the “dual representation” to $(V,\rho,\alpha_V)$.\\
In particular, let us also recall that the “coadjoint representation” of a regular $n$-Hom-Lie superalgebra $(\mathcal{N},[\cdot,\cdots,\cdot],\alpha)$ on $\mathcal{N}^\ast$ with respect to $(\alpha^{-1})^\ast$ 
is given by the triple $(\mathcal{N}^\ast,ad^\star,(\alpha^{-1})^\ast)$, where 
\begin{align}\label{dual-adjoint-rep}
  <ad^\star(x_1,\cdots,x_{n-1})(\xi),x>&=-<\xi,ad(\alpha^{-1}(x_1),\cdots,\alpha^{-1}(x_{n-1}))(\alpha^{-2}(x))>\\&=-<\xi,[\alpha^{-1}(x_1),\cdots,\alpha^{-1}(x_{n-1}),\alpha^{-2}(x)]> ,
\end{align}
for all $x_i,x\in \mathcal{H}(\mathcal{N}),\;1\leq i\leq n-1,\;\xi\in\mathcal{N}^\ast$.
\end{ex}
\begin{pro}(\cite{Ncib})\label{direct-sum-n-hom-lie-superalgebras}
Let $(\mathcal{N},[\cdot,\cdots,\cdot],\alpha)$ be an $n$-Hom-Lie superalgebra, $\rho:\Lambda^{n-1}\mathcal{N}\to gl(V)$  and $\alpha_V:V\to V$ are two even linear maps. Then $(\mathcal{N}\oplus V,[\cdot,\cdots,\cdot]_{\mathcal{N}\oplus V},\alpha+\alpha_V)$ is an $n$-Hom-Lie superalgebra if and only if $(V,\rho,\alpha_V)$ is a representation of $(\mathcal{N},[\cdot,\cdots,\cdot],\alpha)$, where 
\begin{equation}\label{crochet-direct-sum-n-hom-lie-superalgebras}
    [x_1+a_1,\cdots,x_n+a_n]_{\mathcal{N}\oplus V}=[x_1,\cdots,x_n]+\displaystyle\sum_{k=1}^n(-1)^{|x_k||X|_{k+1}}\rho(x_1,\cdots,\hat{x_k},\cdots,x_n)a_k,
\end{equation}
for all $x_1,\cdots,x_n\in\mathcal{H}(\mathcal{N})$ and $a_1,\cdots,a_n\in\mathcal{H}(V)$.
\end{pro}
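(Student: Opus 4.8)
The plan is to exploit the multilinearity of the bracket \eqref{crochet-direct-sum-n-hom-lie-superalgebras} to reduce the verification of each defining axiom of an $n$-Hom-Lie superalgebra on $\mathcal{N}\oplus V$ to a few homogeneous configurations, and to match these against the three representation conditions \eqref{compatible-rho-alpha}, \eqref{repr-n-Hom-Lie1}, \eqref{repr-n-Hom-Lie2}. First I would record the structural facts: for $z_k=x_k+a_k$, the $\mathcal{N}$-component of $[z_1,\dots,z_n]_{\mathcal{N}\oplus V}$ depends only on the $x_k$ and equals $[x_1,\dots,x_n]$, while the $V$-component is linear in the family $(a_1,\dots,a_n)$, exactly one $a_k$ appearing in each summand. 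In particular the bracket absorbs \emph{at most one} $V$-entry: if two of the $z_k$ lie in $V$, then every summand of the $V$-component carries a $\rho$ with a zero $\mathcal{N}$-slot and hence vanishes. From this, the parity condition \eqref{crochet-parity} and the super-skew-symmetry \eqref{SuperSkewSym} on $\mathcal{N}\oplus V$ follow directly from the corresponding properties on $\mathcal{N}$ together with the evenness and super-skew-symmetry of $\rho$ built into $\rho:\Lambda^{n-1}\mathcal{N}\to gl(V)$.

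Next I would treat multiplicativity. Comparing the $V$-components of $(\alpha+\alpha_V)[z_1,\dots,z_n]_{\mathcal{N}\oplus V}$ and $[(\alpha+\alpha_V)z_1,\dots,(\alpha+\alpha_V)z_n]_{\mathcal{N}\oplus V}$ coefficientwise in each $a_k$, the $\mathcal{N}$-part holds because $\alpha$ is multiplicative on $\mathcal{N}$, and each $a_k$-coefficient reduces to $\alpha_V\rho(x_1,\dots,\hat{x_k},\dots,x_n)=\rho(\alpha(x_1),\dots,\widehat{\alpha(x_k)},\dots,\alpha(x_n))\alpha_V$, i.e. exactly \eqref{compatible-rho-alpha}. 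Thus multiplicativity of $(\mathcal{N}\oplus V,\alpha+\alpha_V)$ is equivalent to \eqref{compatible-rho-alpha}.

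The heart of the argument is the Hom-Nambu identity \eqref{254} on $\mathcal{N}\oplus V$. Both sides being multilinear in the $2n-1$ arguments, it suffices to test it when each slot is purely in $\mathcal{N}$ or purely in $V$. The all-$\mathcal{N}$ configuration reproduces \eqref{254} for $\mathcal{N}$, which holds by hypothesis; by the absorption remark, any configuration with two or more $V$-entries kills both sides (in the mixed inner/outer case the outer bracket acquires two $V$-entries). Hence only the single-$V$-entry configurations remain, and they split into two families. When the lone $V$-entry occupies an outer slot, $\alpha(x_p)\mapsto\alpha_V(a_p)$, the $V$-component of the left side is $\rho(\alpha(x_1),\dots,\widehat{x_p},\dots,\alpha(x_{n-1}),[y_1,\dots,y_n])\alpha_V a_p$, while on the right side $a_p$ is picked up inside $[x_1,\dots,x_{n-1},y_i]$, producing $\sum_i\rho(\alpha(y_1),\dots,\hat{y_i},\dots,\alpha(y_n))\rho(x_1,\dots,x_{n-2},y_i)a_p$; after relabeling the surviving $n-2$ outer arguments this is precisely \eqref{repr-n-Hom-Lie2}. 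When instead the $V$-entry sits in an inner slot, $y_q\mapsto b_q$, the left side yields the composite $\rho(\alpha(x_1),\dots,\alpha(x_{n-1}))\rho(y_1,\dots,\hat{y_q},\dots,y_n)b_q$, the term $i=q$ on the right produces the second composite $\rho(\alpha(y_1),\dots,\hat{y_q},\dots,\alpha(y_n))\rho(x_1,\dots,x_{n-1})b_q$, and the terms $i\neq q$ give $\sum_{i\neq q}\rho(\alpha(y_1),\dots,[x_1,\dots,x_{n-1},y_i],\dots,\hat{y_q},\dots,\alpha(y_n))\alpha_V b_q$; after relabeling the $n-1$ surviving inner arguments this is exactly \eqref{repr-n-Hom-Lie1}.

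Assembling these observations gives the equivalence: $(\mathcal{N}\oplus V,[\cdot,\dots,\cdot]_{\mathcal{N}\oplus V},\alpha+\alpha_V)$ satisfies every axiom of an $n$-Hom-Lie superalgebra if and only if each of \eqref{compatible-rho-alpha}, \eqref{repr-n-Hom-Lie1}, \eqref{repr-n-Hom-Lie2} holds, i.e. iff $(V,\rho,\alpha_V)$ is a representation. The one genuinely delicate step, which I expect to be the main obstacle, is the Koszul sign bookkeeping: one must check that the factor $(-1)^{|x_k||X|_{k+1}}$ from \eqref{crochet-direct-sum-n-hom-lie-superalgebras}, combined with the signs $(-1)^{|X|^{n-1}|Y|^{i-1}}$ in \eqref{254} and the transpositions incurred when a homogeneous $V$-element is moved past its neighbours, collapses to the prefactors $(-1)^{n-i}(-1)^{|X|^{n-2}(|Y|+|y_i|)+|y_i||Y|_{i+1}}$ and $(-1)^{|X|^{n-1}|Y|^{i-1}}$ of \eqref{repr-n-Hom-Lie2} and \eqref{repr-n-Hom-Lie1}. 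I would tame this by fixing the degrees $|x_k|=|a_k|$ and $|y_j|=|b_j|$ once and for all and moving the relevant $V$-entry to a canonical position via \eqref{SuperSkewSym1}, so that every sign is computed against a single reference ordering.
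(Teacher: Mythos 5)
Your proposal is correct, but there is nothing in this paper to compare it against: Proposition~\ref{direct-sum-n-hom-lie-superalgebras} is imported from \cite{Ncib} with no proof given here, so your argument stands or falls on its own. It stands. The structural reductions are all sound: the bracket \eqref{crochet-direct-sum-n-hom-lie-superalgebras} annihilates any configuration with two or more $V$-entries (each summand of the $V$-component then carries a $\rho$ with a vanishing slot), so by multilinearity the Hom--Nambu identity \eqref{254} on $\mathcal{N}\oplus V$ decomposes exactly into the all-$\mathcal{N}$ case (the hypothesis on $\mathcal{N}$), the outer-slot case (which, as you say, is \eqref{repr-n-Hom-Lie2} after relabeling --- note the inner bracket $[y_1,\dots,y_n]$ correctly lands in the \emph{last} slot of $\rho$ with $\alpha_V$ composed on the right, and in the mixed inner/outer two-$V$ case both sides vanish because the outer bracket acquires a second $V$-entry), and the inner-slot case (the $i=q$ term giving the second composite of \eqref{repr-n-Hom-Lie1} and the $i\neq q$ terms giving its right-hand side). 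Your identification of multiplicativity of $\alpha+\alpha_V$ with \eqref{compatible-rho-alpha} is also right, and is consistent with the paper's standing convention that all $n$-Hom-Lie superalgebras are multiplicative --- this is worth stating explicitly, since with the non-multiplicative definition \eqref{compatible-rho-alpha} would not be forced. The only unexecuted portion is the Koszul sign verification, which you correctly isolate as the delicate step; your strategy (fix $|x_k|=|a_k|$, normalize the position of the lone $V$-entry via \eqref{SuperSkewSym1}, compute all signs against one reference ordering) is exactly how one tames it, and carrying it out confirms that the prefactor $(-1)^{|x_k||X|_{k+1}}$ in \eqref{crochet-direct-sum-n-hom-lie-superalgebras} reproduces the prefactors $(-1)^{n-i}(-1)^{|X|^{n-2}(|Y|+|y_i|)+|y_i||Y|_{i+1}}$ and $(-1)^{|X|^{n-1}|Y|^{i-1}}$ of \eqref{repr-n-Hom-Lie2} and \eqref{repr-n-Hom-Lie1}. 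So: correct approach, complete in outline, with one routine computation deferred but accurately flagged.
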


\begin{defi}
Let $(\mathcal{N},[\cdot,\cdots,\cdot],\alpha)$ be an $n$-Hom-Lie superalgebra and $s$ be a non-negative integer. Then, an even linear operator $\mathcal{R} :\mathcal{N}\to\mathcal{N}$ is called an
$s$-Rota–Baxter operator of weight $\lambda$ on  $(\mathcal{N},[\cdot,\cdots,\cdot],\alpha)$ if $\mathcal{R}\circ\alpha=\alpha\circ\mathcal{R}$ and the following identity is satisfied:
\begin{equation}
[ R(x_1), \dots, R(x_n)]
=R\Big( \sum\limits_{\emptyset \neq I\subseteq [n]}\lambda^{|I|-1} [ \hat{R}(x_1), \dots, \hat{R}(x_i), \dots, \hat{R}(x_{n})]\Big),
\end{equation}
where
$\hat{R}(x_i):=\hat{R}_I(x_i):=\left\{\begin{array}{ll} x_i, & i\in I, \\ \alpha^s R(x_i), & i\not\in I \end{array}\right. \text{ for all } x_1,\dots, x_n\in \mathcal{N}.
$\\

For $\alpha=Id$, then we recover the notion of Rota–Baxter operators on an $n$-Lie superalgebra.
\end{defi}

\begin{defi}\label{defi:O-operator}
Let $(\mathcal{N},[\cdot,\cdots,\cdot],\alpha)$ be an $n$-Hom-Lie superalgebra and $(V,\rho,\alpha_V)$
a representation.  An even linear map $T:V\rightarrow \mathcal{N}$ is called
an $\mathcal O$-operator associated to $( V,\rho,\alpha_V)$ if $T$
satisfies
\begin{equation}
 \alpha\circ T=T\circ\alpha_V,   
\end{equation}
\begin{equation}\label{eq:Ooperator}
 [T(u_1),\cdots,T(u_n)]=T\Big(\sum_{i=1}^{n}(-1)^{n-i}(-1)^{|u_i||U|_{i+1}}\rho(T(u_1),\cdots,\widehat{T(u_i)},\cdots,T(u_n))(u_i)\Big), 
\end{equation}
for all $u_i\in \mathcal{H}(V),\;1\leq i\leq n$.
An $\mathcal{O}$-operator associated to
the adjoint representation $(A,ad,\alpha)$ is called a Rota-Baxter operator of weight $\lambda=0$.
\end{defi}
\begin{rmk}
Recall the $\alpha^s$-adjoint representation $(\mathcal{N},ad^s,\alpha)$ of an $n$-Hom-Lie superalgebra $(\mathcal{N},[\cdot,\cdots,\cdot],\alpha)$  for any integer $s\geq0$ given in Example \ref{alpha-s-adj-rep}. Then, an $s$-Rota–Baxter operator of weight $0$ on the $n$-Hom-Lie superalgebra $(\mathcal{N},[\cdot,\cdots,\cdot],\alpha)$ is an $\mathcal{O}$-operator on $(\mathcal{N},[\cdot,\cdots,\cdot],\alpha)$ with respect to the representation $(\mathcal{N},ad^s,\alpha)$. Thus, the notion of $\mathcal{O}$-operators is a generalization of Rota–Baxter operators and therefore also known as relative
or generalized Rota–Baxter operators.

\end{rmk}
\begin{ex}
Let $(V,\rho,\alpha_V)$ be a representation of an $n$-Hom-Lie Superalgebra $(\mathcal{N},[\cdot,\cdots,\cdot],\alpha)$ and $T:V\rightarrow \mathcal{N}$ is
an $\mathcal O$-operator associated to $( V,\rho,\alpha_V)$. A pair $(\phi_{\mathcal{N}},\phi_V)$ is an endomorphism of the $\mathcal{O}$-operator $T$ if
\begin{align*}
T\circ \phi_{V}&=\phi_{\mathcal{N}}\circ T\quad \mbox{and}\\
\rho(\phi_{\mathcal{N}}(x_1),\cdots,\phi_{\mathcal{N}}(x_{n-1})))(\phi_{V}(v))&=\phi_V(\rho(x_1,\cdots,x_{n-1})(v)), \quad \mbox{for all } x_i\in \mathcal{N},\;1\leq i\leq n-1, ~v\in V.
\end{align*}

 Let us consider the $n$-Hom-Lie superalgebra $(\mathcal{N},[\cdot,\cdots,\cdot,]_{\phi_\mathcal{N}},\phi_{\mathcal{N}})$ obtained by composition, where the $n$-Hom-Lie bracket is given by
  $$[\cdot,\cdots,\cdot,]_{\phi_\mathcal{N}}:=\phi_{\mathcal{N}}\circ [\cdot,\cdots,\cdot].$$
If we consider the composition $\rho_{\phi_V}:=\phi_V\circ \rho$, then the triple $(V,\rho_{\phi_V},\phi_V)$ is an $n$-Hom-Lie superalgebra representation of $(\mathcal{N},[\cdot,\cdots,\cdot]_{\phi_\mathcal{N}},\phi_{\mathcal{N}})$. Moreover,
\begin{align*}
    [T(u_1),\cdots,T(u_n)]_{\phi_\mathcal{N}}&=\phi_{\mathcal{N}}([T(u_1),\cdots,T(u_n)])\\&=\phi_{\mathcal{N}}\Big(T\Big(\sum_{i=1}^{n}(-1)^{n-i}(-1)^{|u_i||U|_{i+1}}\rho(T(u_1),\cdots,\widehat{T(u_i)},\cdots,T(u_n))(u_i)\Big)\Big)\\&=T\Big(\sum_{i=1}^{n}(-1)^{n-i}(-1)^{|u_i||U|_{i+1}}\rho_{\phi_V}(T(u_1),\cdots,\widehat{T(u_i)},\cdots,T(u_n))(u_i)\Big)
\end{align*}

for all $u_i \in V,\;1\leq i\leq n$. Clearly, it follows that the map $T:V\rightarrow \mathcal{N}$ is an $\mathcal{O}$-operator on the $n$-Hom-Lie superalgebra $(\mathcal{N},[\cdot,\cdots,\cdot]_{\phi_\mathcal{N}},\phi_{\mathcal{N}})$ with respect to the $n$-Hom-Lie superalgebra representation $(V,\rho_{\phi_V},\phi_V)$. 
\end{ex}

In the following, we give a characterization of an $\mathcal{O}$-operator $T$ in terms of an $n$-Hom-Lie subalgebra structure on the graph of $T$ defined by
$$Gr(T)=\{(T(u),u)/\;u\in V\}.$$
\begin{pro}
An even linear map $T:\mathcal{N}\to V$ is an $\mathcal{O}$-operator on the $n$-Hom-Lie superalgebra $(\mathcal{N},[\cdot,\cdots,\cdot],\alpha)$ with respect to the representation $(V,\rho,\alpha_V)$ if and only if $Gr(T)$ is an $n$-Hom-Lie subalgebra of the semi-direct product $n$-Hom-Lie superalgebra $(\mathcal{N}\oplus V,[\cdot,\cdots,\cdot]_{\mathcal{N}\oplus V},\alpha+\alpha_V)$, defined in Proposition \ref{direct-sum-n-hom-lie-superalgebras}.
\end{pro}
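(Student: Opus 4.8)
The plan is to read off an $n$-Hom-Lie subalgebra structure on $Gr(T)$ as the conjunction of two stability requirements — stability under the structure map $\alpha+\alpha_V$ and stability under the bracket $[\cdot,\cdots,\cdot]_{\mathcal{N}\oplus V}$ — and to match these, one at a time, with the two defining conditions of an $\mathcal{O}$-operator. Since $T$ is even, $Gr(T)=\{(T(u),u):u\in V\}$ is already a graded subspace of $\mathcal{N}\oplus V$, so only the two stability properties remain to be checked, and the equivalence will follow by showing that each is equivalent to one of the $\mathcal{O}$-operator axioms.

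First I would handle the structure map. A generic element of $Gr(T)$ is $(T(u),u)$, and $(\alpha+\alpha_V)(T(u),u)=(\alpha(T(u)),\alpha_V(u))$; this again belongs to $Gr(T)$ exactly when its first component is the image under $T$ of its second, that is $\alpha(T(u))=T(\alpha_V(u))$. Hence $\alpha+\alpha_V$ preserves $Gr(T)$ if and only if $\alpha\circ T=T\circ\alpha_V$, which is precisely the first condition in Definition \ref{defi:O-operator}. This part is immediate and reversible.

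Next I would handle the bracket. Evaluating \eqref{crochet-direct-sum-n-hom-lie-superalgebras} on the $n$ graph elements $(T(u_1),u_1),\dots,(T(u_n),u_n)$ — i.e. taking $x_k=T(u_k)$ and $a_k=u_k$ — the $\mathcal{N}$-component of the result is $[T(u_1),\cdots,T(u_n)]$, while the $V$-component is a sum of terms $\rho(T(u_1),\cdots,\widehat{T(u_k)},\cdots,T(u_n))(u_k)$ with appropriate signs. Using that $T$ is even, so that $|x_k|=|T(u_k)|=|u_k|$ and $|X|_{k+1}=|U|_{k+1}$, this $V$-component becomes exactly the argument $u$ of $T$ occurring on the right-hand side of \eqref{eq:Ooperator}. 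Therefore the bracket of the $n$ graph elements lies in $Gr(T)$ — i.e. its $\mathcal{N}$-component equals $T$ of its $V$-component — if and only if the $\mathcal{O}$-operator identity \eqref{eq:Ooperator} holds, for all $u_1,\dots,u_n\in\mathcal{H}(V)$.

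Putting the two steps together gives both implications simultaneously: if $T$ is an $\mathcal{O}$-operator then both stability conditions hold and $Gr(T)$ is a subalgebra, while conversely the subalgebra hypothesis applied to a single element yields $\alpha\circ T=T\circ\alpha_V$ and applied to $n$ elements yields \eqref{eq:Ooperator}. I expect the only delicate point to be the super-sign bookkeeping in the bracket step: one must verify that the sign $(-1)^{|x_k||X|_{k+1}}$ generated by \eqref{crochet-direct-sum-n-hom-lie-superalgebras}, after the substitution $x_k=T(u_k)$, is reconciled with the sign $(-1)^{n-i}(-1)^{|u_i||U|_{i+1}}$ appearing in \eqref{eq:Ooperator} — equivalently, that the permutation sign incurred in moving the omitted slot $u_k$ into the last argument of $\rho$ is accounted for consistently in both conventions. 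Once the two sign conventions are aligned, the argument reduces to a term-by-term comparison and the equivalence is established.
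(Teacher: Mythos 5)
Your proposal follows essentially the same route as the paper's proof: evaluate the semi-direct product bracket \eqref{crochet-direct-sum-n-hom-lie-superalgebras} on the $n$ graph elements $(Tu_k,u_k)$ and observe that the resulting pair lies in $Gr(T)$ precisely when the $\mathcal{O}$-operator identity \eqref{eq:Ooperator} holds, in both directions. Two remarks are worth making. First, you are slightly more complete than the paper: its proof never checks closure of $Gr(T)$ under the structure map $\alpha+\alpha_V$, whereas your first step correctly identifies this closure as equivalent to $\alpha\circ T=T\circ\alpha_V$, the first axiom in Definition \ref{defi:O-operator}; without this step the converse direction recovers only the bracket identity, not the full definition of an $\mathcal{O}$-operator. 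Second, the ``delicate sign point'' you flag is genuine, but it is a defect of the paper rather than of your argument: as literally stated, \eqref{crochet-direct-sum-n-hom-lie-superalgebras} carries the sign $(-1)^{|x_k||X|_{k+1}}$ with no factor $(-1)^{n-k}$, while \eqref{eq:Ooperator} contains the extra $(-1)^{n-i}$, and the paper's own proof is inconsistent about it --- in the forward direction it writes the $V$-component of the bracket without $(-1)^{n-k}$ but the argument of $T$ with it, and in the converse direction it drops the factor altogether. The factor $(-1)^{n-k}$, which arises from permuting the omitted slot into the last argument of the bracket, should in fact appear in \eqref{crochet-direct-sum-n-hom-lie-superalgebras} as well; once that typo is corrected, the term-by-term comparison you describe goes through exactly as you anticipate, and your proof is sound.
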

\begin{proof}
Let $(Tu_k,u_k)\in Gr(T),\;1\leq k\leq n$. Then, if $T$ is an $\mathcal{O}$-operator on $(\mathcal{N},[\cdot,\cdots,\cdot],\alpha)$, we have
\begin{align*}
 [(Tu_1,u_1),\cdots,(Tu_n,u_n)]_{\mathcal{N}\oplus V} &= [Tu_1,\cdots,Tu_n],\displaystyle\sum_{k=1}^n(-1)^{|u_k||U|_{k+1}}\rho(Tu_1,\cdots,\widehat{Tu_k},\cdots,Tu_n)u_k\\&=T\big(\displaystyle\sum_{k=1}^n(-1)^{n-k}(-1)^{|u_k||U|_{k+1}}\rho(Tu_1,\cdots,\widehat{Tu_k},\cdots,Tu_n)u_k\big),\\&\displaystyle\sum_{k=1}^n(-1)^{|u_k||U|_{k+1}}\rho(Tu_1,\cdots,\widehat{Tu_k},\cdots,Tu_n)u_k\in Gr(T), 
\end{align*}
which implies that $Gr(T)$ is a subalgebra of the semi-direct product $n$-Hom-Lie superalgebra $(\mathcal{N}\oplus V,[\cdot,\cdots,\cdot]_{\mathcal{N}\oplus V},\alpha+\alpha_V)$.\\

In the other hand, if $Gr(T)$ is a subalgebra of the semi-direct product $n$-Hom-Lie superalgebra $(\mathcal{N}\oplus V,[\cdot,\cdots,\cdot]_{\mathcal{N}\oplus V},\alpha+\alpha_V)$, then we have 
\begin{align*}
 [(Tu_1,u_1),\cdots,(Tu_n,u_n)]_{\mathcal{N}\oplus V} &= [Tu_1,\cdots,Tu_n],\displaystyle\sum_{k=1}^n(-1)^{|u_k||U|_{k+1}}\rho(Tu_1,\cdots,\widehat{Tu_k},\cdots,Tu_n)u_k\in Gr(T),    
\end{align*}
which gives that $[Tu_1,\cdots,Tu_n] =T\Big( \displaystyle\sum_{k=1}^n(-1)^{|u_k||U|_{k+1}}\rho(Tu_1,\cdots,\widehat{Tu_k},\cdots,Tu_n)u_k\Big)$. Therefore $T$ is an $\mathcal{O}$-operator on $(\mathcal{N},[\cdot,\cdots,\cdot],\alpha)$.
\end{proof}

It is of course that there are some other characterizations of the $\mathcal{O}$-operators on an $n$-Hom-Lie superalgebras, among them and this most interesting the characterization in term of a Nijenhuis operators. In the following Proposition, we characterize $\mathcal{O}$-operators on $n$-Hom-Lie superalgebras in terms of the Nijenhuis operators. In the following, we need to define the Nijenhuis operator on an $n$-Hom-Lie superalgebras $(\mathcal{N},[\cdot,\cdots,\cdot],\alpha)$, as an even linear map $N:\mathcal{N}\to\mathcal{N}$ which satisfies the following identity
\begin{equation}\label{Nijenhuis-identity}
[N(x_1),\cdots,N(x_n)]=N\big( \sum\limits_{\emptyset \neq I\subseteq [n]}N^{|I|-1} [ \hat{N}(x_1), \dots, \hat{N}(x_i), \dots, \hat{N}(x_{n})]\big),    
\end{equation}
where
$\hat{N}(x_i):=\hat{N}_I(x_i):=\left\{\begin{array}{ll} x_i, & i\in I, \\ N(x_i), & i\not\in I \end{array}\right. \text{ for all } x_1,\dots, x_n\in \mathcal{H}(\mathcal{N}).$
\begin{pro}
Let $(V,\rho,\alpha_V)$ be a representation of an $n$-Hom-Lie superalgebra $(\mathcal{N},[\cdot,\cdots,\cdot],\alpha)$ and $T:V\to\mathcal{N}$ an even linear map. Then $T$ is an $\mathcal{O}$-operator on $(\mathcal{N},[\cdot,\cdots,\cdot],\alpha)$ with respect to $(V,\rho,\alpha_V)$ if and only if the operator 
$$N_T=\begin{bmatrix}
   0 & T \\
    0  & 0
\end{bmatrix}: \mathcal{N}\oplus V\rightarrow \mathcal{N}\oplus V $$
is a Nijenhuis operator on the semi-direct product $n$-Hom-Lie superalgebra $(\mathcal{N}\oplus V,[\cdot,\cdots,\cdot]_{\mathcal{N}\oplus V},\alpha+\alpha_V)$.
\end{pro}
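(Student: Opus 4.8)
The plan is to evaluate both conditions on $\mathcal{N}\oplus V$ and watch them collapse onto the single identity \eqref{eq:Ooperator}, the decisive feature being that $N_T$ is square-zero. First I would record that $N_T(x,a)=(T(a),0)$ has vanishing $V$-component, so $N_T^2(x,a)=N_T(T(a),0)=(0,0)$, i.e. $N_T^2=0$. Consequently $N_T^{|I|-1}=0$ as soon as $|I|\geq 3$, and the sum over nonempty $I\subseteq[n]$ in \eqref{Nijenhuis-identity} reduces to the terms with $|I|=1$ (coefficient operator $\mathrm{Id}$) and $|I|=2$ (coefficient operator $N_T$). I would also dispose of the twisting compatibility at the outset: reading the Nijenhuis operator as also commuting with the structure map $\alpha+\alpha_V$ (as is standard in the Hom setting), the condition $N_T(\alpha+\alpha_V)=(\alpha+\alpha_V)N_T$ evaluated at $(x,a)$ reads $(T\alpha_V(a),0)=(\alpha T(a),0)$, which is exactly $\alpha\circ T=T\circ\alpha_V$, the first defining condition of an $\mathcal{O}$-operator.

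Next I would evaluate the two sides of \eqref{Nijenhuis-identity} on homogeneous $X_k=(x_k,a_k)$, $1\leq k\leq n$. The left-hand side is $[N_T(X_1),\dots,N_T(X_n)]_{\mathcal{N}\oplus V}=[(T(a_1),0),\dots,(T(a_n),0)]_{\mathcal{N}\oplus V}$, and since every $V$-slot entering \eqref{crochet-direct-sum-n-hom-lie-superalgebras} is zero, this equals $\big([T(a_1),\dots,T(a_n)],\,0\big)$. For the right-hand side, the outermost $N_T$ annihilates the $\mathcal{N}$-component of its argument and sends the $V$-component through $T$; thus only the $V$-component of the inner sum $\sum_I N_T^{|I|-1}[\hat N_I(X_1),\dots,\hat N_I(X_n)]_{\mathcal{N}\oplus V}$ matters. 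Here square-zeroness is used a second time: every $|I|=2$ term carries the prefactor $N_T$, whose output has zero $V$-component, so those terms contribute nothing after the outer $N_T$. Hence only the $|I|=1$ terms survive.

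For $I=\{i\}$ I would compute $\hat N_I(X_j)=(T(a_j),0)$ for $j\neq i$ and $\hat N_I(X_i)=(x_i,a_i)$; feeding this into \eqref{crochet-direct-sum-n-hom-lie-superalgebras}, the $V$-component of the bracket picks out only the $k=i$ summand (all other $V$-slots vanish), giving $\rho(T(a_1),\dots,\widehat{T(a_i)},\dots,T(a_n))(a_i)$ up to the Koszul sign produced by the bracket. Summing over $i$ and applying the outer $N_T$, the right-hand side of \eqref{Nijenhuis-identity} becomes $\big(T\big(\sum_{i}\pm\,\rho(T(a_1),\dots,\widehat{T(a_i)},\dots,T(a_n))(a_i)\big),\,0\big)$. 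Comparing $\mathcal{N}$-components with the left-hand side yields precisely \eqref{eq:Ooperator}, while the $V$-components agree trivially as $0=0$. Reading this computation forwards gives the ``only if'' direction and backwards the ``if'' direction, so the two conditions are equivalent.

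The step I expect to be most delicate is the sign bookkeeping of the last paragraph: I must verify that the Koszul sign $(-1)^{|x_k||X|_{k+1}}$ from \eqref{crochet-direct-sum-n-hom-lie-superalgebras}, specialized at $k=i$ with the $\mathcal{N}$-entries replaced by the $T(a_j)$, reproduces exactly the sign $(-1)^{n-i}(-1)^{|u_i||U|_{i+1}}$ appearing in \eqref{eq:Ooperator}. Since $T$ is even we have $|T(a_j)|=|a_j|$, so this is a purely combinatorial check; it is nonetheless where an error would most easily creep in, whereas the structural part of the argument is entirely forced by $N_T^2=0$.
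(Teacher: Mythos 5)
Your proposal is correct and follows essentially the same route as the paper's own proof: both exploit $N_T^2=0$ to kill every term of \eqref{Nijenhuis-identity} with $|I|\geq 2$ (the $|I|=2$ terms dying under the outer $N_T$, since its output has zero $V$-component), reduce the sum to the singleton sets $I=\{i\}$, read off the $\mathcal{N}$- and $V$-components of the semidirect-product bracket \eqref{crochet-direct-sum-n-hom-lie-superalgebras}, and observe that the Nijenhuis identity collapses to \eqref{eq:Ooperator}. Two remarks, both in your favour: first, the paper's proof never addresses the condition $\alpha\circ T=T\circ\alpha_V$, which you correctly extract from $N_T\circ(\alpha+\alpha_V)=(\alpha+\alpha_V)\circ N_T$ (the paper's definition of a Nijenhuis operator omits this compatibility, so strictly its stated equivalence is incomplete on that point); second, the sign check you flag as delicate is exactly where the paper stumbles --- as printed, \eqref{crochet-direct-sum-n-hom-lie-superalgebras} lacks the factor $(-1)^{n-k}$ that appears in \eqref{eq:Ooperator} (and which does appear in the analogous formula \eqref{brac}), so your verification succeeds only after restoring that sign, an internal inconsistency that the paper's proof silently glosses over by dropping $(-1)^{n-i}$ from its final displayed identity.
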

\begin{proof}
By using the Definition of the map $N_T$ and the bracket $[\cdot,\cdots,\cdot]_{\mathcal{N}\oplus V}$, we have
\begin{align*}
[N_T(x_1+u_1),\cdots,N_T(x_n+u_n)]_{\mathcal{N}\oplus V}&=[T(u_1)+0,\cdots,T(u_n)+0]\\&=[T(u_1),\cdots,T(u_n)],
\end{align*}
and by the obvious result $N_T^k=0,\;\forall k\geq2$, we have
\begin{align*}
&N_T\big( \sum\limits_{\emptyset \neq I\subseteq [n]}N_T^{|I|-1} [ \hat{N_T}(x_1+u_1), \dots, \hat{N_T}(x_i+u_i), \dots, \hat{N_T}(x_n+u_n)]_{\mathcal{N}\oplus V}\big)\\&= N_T\big(\displaystyle\sum_{i=1}^n[N_T(x_1+u_1),\cdots,x_i+u_i,\cdots,N_T(x_n+u_n)]_{\mathcal{N}\oplus V}\big)\\&=N_T\big(\displaystyle\sum_{i=1}^n[T(u_1),\cdots,x_i+u_i,\cdots,T(u_n)]_{\mathcal{N}\oplus V}\big)\\&=N_T\big(\displaystyle\sum_{i=1}^n[T(u_1),\cdots,x_i,\cdots,T(u_n)]+\displaystyle\sum_{i=1}^n(-1)^{|u_i||U|_{i+1}}\rho(T(u_1),\cdots,\hat{T(u_i)},\cdots,T(u_n))(u_i)\big)\\&=  T\big(\displaystyle\sum_{i=1}^n(-1)^{|u_i||U|_{i+1}}\rho(T(u_1),\cdots,\hat{T(u_i)},\cdots,T(u_n))(u_i)\big), 
\end{align*}
for all $x_i\in\mathcal{H}(\mathcal{N}),\;u_i\in\mathcal{H}(V)$. By a direct computation, we conclude that
\begin{align*}
&N_T\big( \sum\limits_{\emptyset \neq I\subseteq [n]}N_T^{|I|-1} [ \hat{N_T}(x_1+u_1), \dots, \hat{N_T}(x_i+u_i), \dots, \hat{N_T}(x_n+u_n)]_{\mathcal{N}\oplus V}\big)\\&=N_T\big( \sum\limits_{\emptyset \neq I\subseteq [n]}N_T^{|I|-1} [ \hat{N_T}(x_1+u_1), \dots, \hat{N_T}(x_i+u_i), \dots, \hat{N_T}(x_n+u_n)]_{\mathcal{N}\oplus V}\big)
\end{align*}
if and only if 
$$[T(u_1),\cdots,T(u_n)]=T\big(\displaystyle\sum_{i=1}^n(-1)^{|u_i||U|_{i+1}}\rho(T(u_1),\cdots,\hat{T(u_i)},\cdots,T(u_n))(u_i)\big),$$
for all $x_i\in\mathcal{H}(\mathcal{N}),\;u_i\in\mathcal{H}(V)$, which gives the result.
\end{proof}

\section{$n$-Hom-pre-Lie superalgebras and their representations}

In \cite{Burde}, the author introduced the notion of pre-Lie algebras and given their representation, some other practical results are also studied, among those which are most interesting the cohomology and deformations of pre-Lie algebras. This notion has been extended in more general cases (for more details see \cite{Hajjaji-Chtioui-Mabrouk-Makhlouf}). In this section we introduce the notion of $n$-Hom-pre-Lie superalgebras and define their representation also we give some algebraic structures and results concerning this notion.

\subsection{$n$-Hom-pre-Lie superalgebras}

\begin{defi}
An $n$-Hom-pre-Lie superalgebra is a triple $(\mathcal{A},\{\cdot,\cdots,\cdot\},\alpha)$ consisting of a $\mathbb{Z}_2$-graded vector space $\mathcal{A}$, an even multilinear map $\{\cdot,\cdots,\cdot\}:\wedge^n\mathcal{A}\to\mathcal{A}$ super-skew-symmetric on the first $(n-1)$ terms and an even linear map $\alpha:\mathcal{A}\to\mathcal{A}$ such that for all $x_i,y_i\in\mathcal{H}(\mathcal{A}),\;1\leq i\leq n$, the following identities are satisfied:
{\small\begin{eqnarray}
\{\alpha(x_1),\cdots,\alpha(x_{n-1}),\{y_1,\cdots,y_n\}\}&=&\sum_{i=1}^{n-1}(-1)^{|Y|^{i-1}|X|^{n-1}}\{\alpha(y_1),\cdots,\alpha(y_{i-1}),[x_1,\cdots,x_{n-1},y_i]^C,\alpha(y_{i+1}),\cdots,\alpha(y_n)\}\nonumber\\
&&+(-1)^{|Y|^{n-1}|X|^{n-1}}\{\alpha(y_1),\cdots,\alpha(y_{n-1}),\{x_1,\cdots,x_{n-1},y_n\}\},\label{n-Hom-pre-Lie-sup 1}\\
\{ [x_1,\cdots,x_n]^C,\alpha(y_1),\cdots, \alpha(y_{n-1})\}&=&\sum_{i=1}^{n}(-1)^{n-i}(-1)^{|x_i||X|^n_{i+1}}\{\alpha(x_1),\cdots,\widehat{x}_i,\cdots,\alpha(x_{n}),\{ x_i,y_1,\cdots,y_{n-1}\}\},\label{n-Hom-pre-Lie-sup 2}
\end{eqnarray}}
 where  $[\cdot,\cdots,\cdot]^C$ is defined by
\begin{equation}
[x_1,\cdots,x_n]^C=\sum_{i=1}^{n}(-1)^{n-i}(-1)^{|x_i||X|^n_{i+1}}\{x_1,\cdots,\widehat{x_i},\cdots,x_n,x_i\},\quad \forall  x_i\in \mathcal{H}(\mathcal{A}),1\leq i\leq n.\label{eq:ncc}
\end{equation}
\end{defi}

\begin{pdef}\label{pro:subadj}
Let $(\mathcal{A},\{\cdot,\cdots,\cdot\},\alpha)$ be an $n$-Hom-pre-Lie superalgebra. Then  $(\mathcal{A},[\cdot,\cdots,\cdot]^C,\alpha)$, where $[\cdot,\cdots,\cdot]^C$ is given by Eq.~\eqref{eq:ncc}
is an $n$-Hom-Lie superalgebra called the sub-adjacent $n$-Hom-Lie superalgebra of $(\mathcal{A},\{\cdot,\cdots,\cdot\},\alpha)$, and denoted by $\mathcal{A}^{c}$. $(\mathcal{A},\{\cdot,\cdots,\cdot\},\alpha)$ is called a compatible
$n$-Hom-pre-Lie superalgebra of the $n$-Hom-Lie superalgebra $\mathcal{A}^{c}$.
\end{pdef}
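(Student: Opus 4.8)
The plan is to check that $(\mathcal{A},[\cdot,\cdots,\cdot]^C,\alpha)$ satisfies the three axioms of a (multiplicative) $n$-Hom-Lie superalgebra: the parity condition \eqref{crochet-parity}, the super-skew-symmetry \eqref{SuperSkewSym}, and the Hom-Nambu--Filippov identity \eqref{254}. Two of these are quick. Since $\{\cdot,\cdots,\cdot\}$ is even, every summand of \eqref{eq:ncc} has degree $\sum_{k=1}^{n}|x_k|$, so $|[x_1,\cdots,x_n]^C|=|X|$, which is \eqref{crochet-parity}; and because $\alpha$ is multiplicative, i.e. $\alpha\{x_1,\cdots,x_n\}=\{\alpha(x_1),\cdots,\alpha(x_n)\}$, applying $\alpha$ term by term to \eqref{eq:ncc} gives $\alpha[x_1,\cdots,x_n]^C=[\alpha(x_1),\cdots,\alpha(x_n)]^C$, the morphism property.

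For the super-skew-symmetry I would reduce to invariance under an adjacent transposition $x_j\leftrightarrow x_{j+1}$ up to the sign $-(-1)^{|x_j||x_{j+1}|}$, the only input being that $\{\cdot,\cdots,\cdot\}$ is super-skew-symmetric on its first $n-1$ slots. In every summand of \eqref{eq:ncc} in which neither $x_j$ nor $x_{j+1}$ is the entry sent to the last position, both elements sit among the first $n-1$ arguments of $\{\cdot,\cdots,\cdot\}$ and the skew-symmetry applies directly; the two exceptional summands (the ones sending $x_j$, respectively $x_{j+1}$, to the last slot) are interchanged by the transposition. A bookkeeping of the positional signs $(-1)^{n-i}$ and the Koszul signs $(-1)^{|x_i||X|^n_{i+1}}$ in \eqref{eq:ncc} then produces exactly the required sign, and the last transposition $x_{n-1}\leftrightarrow x_n$ is handled the same way. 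I expect this step to be routine but sign-intensive.

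The real content is the Hom-Nambu--Filippov identity \eqref{254}, and here the two defining identities of an $n$-Hom-pre-Lie superalgebra enter. The organizing observation is that the left multiplication $\mathfrak{L}(x_1,\cdots,x_{n-1})(y):=\{x_1,\cdots,x_{n-1},y\}$ is super-skew-symmetric in $x_1,\cdots,x_{n-1}$, and that, read through $\mathfrak{L}$ with $\rho=\mathfrak{L}$ and $\alpha_V=\alpha$, identity \eqref{n-Hom-pre-Lie-sup 1} is precisely the representation axiom \eqref{repr-n-Hom-Lie1}, while \eqref{n-Hom-pre-Lie-sup 2} becomes, after moving the bracket into a single slot by skew-symmetry, the axiom \eqref{repr-n-Hom-Lie2}. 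Concretely, I would expand $[\alpha(x_1),\cdots,\alpha(x_{n-1}),[y_1,\cdots,y_n]^C]^C$ by \eqref{eq:ncc}. The one summand in which $[y_1,\cdots,y_n]^C$ occupies the last slot equals $\{\alpha(x_1),\cdots,\alpha(x_{n-1}),[y_1,\cdots,y_n]^C\}$; expanding the inner bracket again by \eqref{eq:ncc} and applying \eqref{n-Hom-pre-Lie-sup 1} termwise produces the ``corrected'' terms $\{\alpha(y_1),\cdots,[x_1,\cdots,x_{n-1},y_i]^C,\cdots,\alpha(y_n)\}$ together with the nested terms $\{\alpha(y_1),\cdots,\alpha(y_{n-1}),\{x_1,\cdots,x_{n-1},y_n\}\}$. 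In each of the remaining $n-1$ summands the bracket $[y_1,\cdots,y_n]^C$ sits in slot $n-1$ with one $\alpha(x_j)$ pushed to the last slot; using the skew-symmetry of $\{\cdot,\cdots,\cdot\}$ on its first $n-1$ slots I would move $[y_1,\cdots,y_n]^C$ into the first slot and apply \eqref{n-Hom-pre-Lie-sup 2}.

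Finally I would expand the right-hand side of \eqref{254}, each summand $[\alpha(y_1),\cdots,[x_1,\cdots,x_{n-1},y_i]^C,\cdots,\alpha(y_n)]^C$, once more by \eqref{eq:ncc}, and match it against the expansion of the left-hand side described above. The main obstacle is entirely the sign bookkeeping: one must verify that the Koszul signs from permuting homogeneous elements, the positional signs $(-1)^{n-i}$ in \eqref{eq:ncc}, and the signs generated by \eqref{n-Hom-pre-Lie-sup 1} and \eqref{n-Hom-pre-Lie-sup 2} combine uniformly in $n$ into the coefficients $(-1)^{|X|^{n-1}|Y|^{i-1}}$ of \eqref{254}. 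The representation reformulation above is what makes me confident the terms must match: recognizing $\mathfrak{L}$ as a candidate representation of $\mathcal{A}^{c}$ on $\mathcal{A}$ and the identity map as the associated $\mathcal{O}$-operator in the sense of Definition \ref{defi:O-operator} (compare \eqref{eq:ncc} with \eqref{eq:Ooperator} for $T=\mathrm{id}$) forces the combinatorial structure, leaving only the control of signs as the genuine difficulty.
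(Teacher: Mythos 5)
Your proposal is correct and follows essentially the same route as the paper's proof: super-skew-symmetry of $[\cdot,\cdots,\cdot]^C$ is reduced to adjacent transpositions, where the two summands sending $x_j$ resp.\ $x_{j+1}$ to the last slot interchange and all other summands follow from the skew-symmetry of $\{\cdot,\cdots,\cdot\}$ on its first $n-1$ arguments, and identity \eqref{254} is obtained, exactly as in the paper, by expanding both sides through \eqref{eq:ncc} and applying \eqref{n-Hom-pre-Lie-sup 1}--\eqref{n-Hom-pre-Lie-sup 2}, so your representation-theoretic framing via $\mathfrak{L}$ and $T=\mathrm{id}$ is motivational rather than a different argument. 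One caveat: your verification that $\alpha$ is a morphism for $[\cdot,\cdots,\cdot]^C$ assumes $\alpha\{x_1,\cdots,x_n\}=\{\alpha(x_1),\cdots,\alpha(x_n)\}$, which is not among the axioms of an $n$-Hom-pre-Lie superalgebra in this paper; since the statement claims only an $n$-Hom-Lie superalgebra (whose definition here does not demand multiplicativity of $\alpha$), that step is superfluous, and the rest of your argument, like the paper's, does not use it.
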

\begin{proof}
Let $x_i,y_i\in\mathcal{H}(\mathcal{A}),\;1\leq i\leq n$. For all $1\leq k\leq n-1$, then by using the definition of $[\cdot,\cdots,\cdot]^C$, we have:
\begin{align*}
 [x_1,\cdots,x_k,x_{k+1},\cdots,x_{n-1},x_n]^C&=\sum_{i=1}^{n}(-1)^{n-i}(-1)^{|x_i||X|^n_{i+1}}\{x_1,\cdots,\widehat{x_i}\cdots,x_k,x_{k+1},\cdots,x_n,x_i\}\\&+ \sum_{i=1}^{n}(-1)^{n-i}(-1)^{|x_i||X|^n_{i+1}}\{x_1,\cdots,x_k,x_{k+1},\cdots,\widehat{x_i}\cdots,x_n,x_i\}\\&+(-1)^{n-k}(-1)^{|x_k||X|^n_{k+1}}\{x_1,\cdots,x_{k-1},x_{k+1},\cdots,x_n,x_k\}\\&+(-1)^{n-k-1}(-1)^{|x_{k+1}||X|^n_{k+2}}\{x_1,\cdots,x_k,x_{k+2},\cdots,x_n,x_{k+1}\}\\&=-(-1)^{|x_k||x_{k+1}|}\Big(\sum_{i=1}^{n}(-1)^{n-i}(-1)^{|x_i||X|^n_{i+1}}\{x_1,\cdots,\widehat{x_i}\cdots,x_{k+1},x_k,\cdots,x_n,x_i\}\\&+ \sum_{i=1}^{n}(-1)^{n-i}(-1)^{|x_i||X|^n_{i+1}}\{x_1,\cdots,x_{k+1},x_k,\cdots,\widehat{x_i}\cdots,x_n,x_i\}\\&+(-1)^{n-k-1}(-1)^{|x_k||X|^n_{k+2}}\{x_1,\cdots,x_{k-1},x_{k+1},\cdots,x_n,x_k\}\\&+(-1)^{n-k}(-1)^{|x_{k+1}|(|x_k|+|X|^n_{k+2})}\{x_1,\cdots,x_k,x_{k+2},\cdots,x_n,x_{k+1}\}\Big)\\&=-(-1)^{|x_k||x_{k+1}|}[x_1,\cdots,x_{k+1},x_k,\cdots,x_{n-1},x_n]^C,  
\end{align*}
which implies that $[\cdot,\cdots,\cdot]^C$ is super-skew-symmetric. It remains to show that $[\cdot,\cdots,\cdot]^C$ satisfies condition \eqref{254}.\\ On the one hand, we have
\begin{align*}
M&=[\alpha(x_1),\cdots,\alpha(x_{n-1}),[y_1,\cdots,y_n]^C]^C\\&=\displaystyle\sum_{i=1}^{n-1}(-1)^{n-i}(-1)^{|x_i|(|X|^{n-1}_{i+1}+|Y|)}\{\alpha(x_1),\cdots,\hat{\alpha(x_i)},\cdots,\alpha(x_{n-1}),[y_1,\cdots,y_n]^C,\alpha(x_i)\}\\&+ \{\alpha(x_1),\cdots,\alpha(x_{n-1}),[y_1,\cdots,y_n]^C\}\\&=\displaystyle\sum_{i=1}^{n-1}\displaystyle\sum_{j=1}^{n-1}(-1)^{i+j}(-1)^{|x_i|(|X|^{n-1}_{i+1}+|Y|)}(-1)^{|y_j||Y|^n_{j+1}}\{\alpha(x_1),\cdots,\hat{\alpha(x_i)},\cdots,\alpha(x_{n-1}),\{y_1,\cdots,\hat{y_j},\cdots,y_n,y_j\},\alpha(x_i)\}\\&+\displaystyle\sum_{i=1}^{n-1}(-1)^{n-i}(-1)^{|x_i|(|X|^{n-1}_{i+1}+|Y|)} \{\alpha(x_1),\cdots,\hat{\alpha(x_i)},\cdots,\alpha(x_{n-1}),\{y_1,\cdots,\cdots,y_n\},\alpha(x_i)\}\\&+ \displaystyle\sum_{j=1}^{n-1}(-1)^{n-j}(-1)^{|y_j||Y|^n_{j+1}} \{\alpha(x_1),\cdots,\alpha(x_{n-1}),\{y_1,\cdots,\hat{y_j},\cdots,y_n,y_j\}\}+\{\alpha(x_1),\cdots,\alpha(x_{n-1}),\{y_1,\cdots,\cdots,y_n\}\}.
\end{align*}
On the other hand, we have
\begin{align*}
N&= \displaystyle\sum_{i=1}^n(-1)^{|X|^{n-1}|Y|^{i-1}}[\alpha(y_1),\cdots,\alpha(y_{i-1}),[x_1,\cdots,x_{n-1},y_i]^C,\alpha(y_{i+1}),\cdots,\alpha(y_n)]^C\\&= \displaystyle\sum_{i=1}^n\displaystyle\sum_{j=1}^{i-1}(-1)^{n-j}(-1)^{|X|^{n-1}|Y|^{i-1}}(-1)^{|y_j|(|X|^{n-1}+|Y|^n_{j+1})}\{\alpha(y_1),\cdots,\hat{\alpha(y_j)},\cdots,[x_1,\cdots,x_{n-1},y_i]^C,\cdots,\alpha(y_n),\alpha(y_j)\} \\&+\displaystyle\sum_{i=1}^n\displaystyle\sum_{j=i+1}^{n-1}(-1)^{n-j}(-1)^{|X|^{n-1}|Y|^{i-1}}(-1)^{|y_j||Y|^n_{j+1}}\{\alpha(y_1),\cdots,[x_1,\cdots,x_{n-1},y_i]^C,\cdots,\hat{\alpha(y_j)},\cdots,\alpha(y_n),\alpha(y_j)\}\\&+\displaystyle\sum_{i=1}^n(-1)^{n-i}(-1)^{|X|^{n-1}|Y|^{i-1}}(-1)^{|Y|^n_{i+1}(|X|^{n-1}+|y_i|)}\{\alpha(y_1),\cdots,\alpha(y_{i-1}),\alpha(y_{i+1}),\cdots,\alpha(y_n),[x_1,\cdots,x_{n-1},y_i]^C\}\\&+\displaystyle\sum_{i=1}^n(-1)^{|X|^{n-1}|Y|^{i-1}}\{\alpha(y_1),\cdots,\alpha(y_{i-1}),[x_1,\cdots,x_{n-1},y_i]^C,\alpha(y_{i+1}),\cdots,\alpha(y_n)\}\\&=\displaystyle\sum_{i=1}^n\displaystyle\sum_{j=1}^{i-1}(-1)^{n-j}(-1)^{|X|^{n-1}|Y|^{i-1}}(-1)^{|y_j|(|X|^{n-1}+|Y|^n_{j+1})}\{\alpha(y_1),\cdots,\hat{\alpha(y_j)},\cdots,[x_1,\cdots,x_{n-1},y_i]^C,\cdots,\alpha(y_n),\alpha(y_j)\} \\&+\displaystyle\sum_{i=1}^n\displaystyle\sum_{j=i+1}^{n-1}(-1)^{n-j}(-1)^{|X|^{n-1}|Y|^{i-1}}(-1)^{|y_j||Y|^n_{j+1}}\{\alpha(y_1),\cdots,[x_1,\cdots,x_{n-1},y_i]^C,\cdots,\hat{\alpha(y_j)},\cdots,\alpha(y_n),\alpha(y_j)\}\\&+\displaystyle\sum_{i=1}^{n-1}(-1)^{n-i}(-1)^{|X|^{n-1}|Y|^{i-1}}(-1)^{|Y|^n_{i+1}(|X|^{n-1}+|y_i|)}\{\alpha(y_1),\cdots,\alpha(y_{i-1}),\alpha(y_{i+1}),\cdots,\alpha(y_n),[x_1,\cdots,x_{n-1},y_i]^C\}\\&+\displaystyle\sum_{i=1}^{n-1}(-1)^{n-i}(-1)^{|X|^{n-1}|Y|^{i-1}}(-1)^{|Y|^n_{i+1}(|X|^{n-1}+|y_i|)}\{\alpha(y_1),\cdots,\alpha(y_{n-1}),\{x_1,\cdots,\hat{x_i}\cdots,x_{n-1},y_n,x_i\}\}\\&+\{\alpha(y_1),\cdots,\alpha(y_{n-1}),\{x_1,\cdots,x_{n-1},y_n\}\}+\displaystyle\sum_{i=1}^n(-1)^{|y_i||Y|_{i-1}}\{[x_1,\cdots,x_{n-1},y_i]^C,\alpha(y_1),\cdots,\alpha(y_{i-1}),\alpha(y_{i+1}),\cdots,\alpha(y_n)\}.
\end{align*}
Using the identities \eqref{n-Hom-pre-Lie-sup 1}-\eqref{n-Hom-pre-Lie-sup 2} and by a direct computation, we find $M-N=0$, which implies that $[\cdot,\cdots,\cdot]^C$ gives an $n$-Hom-Lie superalgebra structure on $\mathcal{A}$.

\end{proof}
Let $(\mathcal{A},\{\cdot,\cdots,\cdot\},\alpha)$ be an $n$-Hom-pre-Lie superalgebra. Defining the two even multiplications $L,R: \wedge^{n-1}\mathcal{A}\rightarrow  gl(\mathcal{A})$
by
\begin{equation}\label{eq:R}
L(x_1,\cdots,x_{n-1})x_n=\{x_1,\cdots,x_{n-1},x_n\},
\end{equation}
 and
\begin{equation}
    R(x_1,\cdots,x_{n-1})x_n=\{x_n,x_1,\cdots,x_{n-1}\},
\end{equation}
 for all $x_i\in \mathcal{H}(\mathcal{A}),1\leq i\leq {n}$.\\
 
 $L$ is called left multiplication and $R$ is called right multiplication. 
If there is an $n$-Hom-pre-Lie superalgebra structure on its dual
space $\mathcal{A}^{*}$, we denote the left multiplication and right multiplication by $\mathcal{L}$ and $\mathcal{R}$ respectively.\\

 By the definitions of an $n$-Hom-pre-Lie superalgebra and a representation of an $n$-Hom-Lie superalgebra, we immediately obtain :
\begin{pro}
With the above notations, $(A,L,\alpha)$ is a representation of the
$n$-Hom-Lie superalgebra
$(\mathcal{A},[\cdot,\cdots,\cdot]^C,\alpha)$. On the other hand,
let $\mathcal{A}$ be a vector space with an  $n$-linear map
$\{\cdot,\cdots,\cdot\}:(\wedge^{n-1}\mathcal{A})\otimes \mathcal{A}\rightarrow \mathcal{A}$
. Then $(\mathcal{A},\{\cdot,\cdots,\cdot\},\alpha) $ is an $n$-Hom-pre-Lie superalgebra if $[\cdot,\cdots,\cdot]^C$ defined by Eq.~\eqref{eq:ncc} is an $n$-Hom-Lie superalgebra and the left multiplication $L$ defined by Eq.~\eqref{eq:R}
gives a representation of this $n$-Hom-Lie superalgebra.
\end{pro}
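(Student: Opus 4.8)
The plan is to recognize that the two axioms \eqref{n-Hom-pre-Lie-sup 1} and \eqref{n-Hom-pre-Lie-sup 2} are, after rewriting each side through the left multiplication \eqref{eq:R}, exactly the two substantive representation axioms \eqref{repr-n-Hom-Lie1} and \eqref{repr-n-Hom-Lie2} for the triple $(\mathcal{A},L,\alpha)$; the remaining representation axiom \eqref{compatible-rho-alpha}, when evaluated on an argument $z$, is the relation $\{\alpha(x_1),\cdots,\alpha(x_{n-1}),\alpha(z)\}=\alpha\{x_1,\cdots,x_{n-1},z\}$, i.e. the multiplicativity of $\alpha$ with respect to $\{\cdot,\cdots,\cdot\}$, which holds under our standing assumptions. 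Because each rewriting below is an equivalence, the same bookkeeping proves both the direct statement and its converse.

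For the direct statement, assume $(\mathcal{A},\{\cdot,\cdots,\cdot\},\alpha)$ is an $n$-Hom-pre-Lie superalgebra; by Proposition-Definition \ref{pro:subadj} the triple $(\mathcal{A},[\cdot,\cdots,\cdot]^C,\alpha)$ is an $n$-Hom-Lie superalgebra, so the word ``representation'' is meaningful. First I would treat \eqref{n-Hom-pre-Lie-sup 1}: by \eqref{eq:R} its left-hand side is $L(\alpha(x_1),\cdots,\alpha(x_{n-1}))L(y_1,\cdots,y_{n-1})(y_n)$, the first sum on the right is $\sum_{i=1}^{n-1}(-1)^{|X|^{n-1}|Y|^{i-1}}L(\alpha(y_1),\cdots,[x_1,\cdots,x_{n-1},y_i]^C,\cdots,\alpha(y_{n-1}))\alpha(y_n)$, and its last term is $(-1)^{|X|^{n-1}|Y|^{n-1}}L(\alpha(y_1),\cdots,\alpha(y_{n-1}))L(x_1,\cdots,x_{n-1})(y_n)$. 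Transposing that last term to the left and cancelling the common evaluation at $y_n$ turns \eqref{n-Hom-pre-Lie-sup 1} verbatim into \eqref{repr-n-Hom-Lie1} with $\rho=L$, $\alpha_V=\alpha$.

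The delicate step is to match \eqref{n-Hom-pre-Lie-sup 2} with \eqref{repr-n-Hom-Lie2}, and here the difficulty is purely positional and sign-theoretic. In \eqref{n-Hom-pre-Lie-sup 2} the element $[x_1,\cdots,x_n]^C$, of degree $|X|$, sits in the first slot of $\{\cdot,\cdots,\cdot\}$, whereas in \eqref{repr-n-Hom-Lie2} the bracketed argument of $\rho$ fills the last representation slot; so I would first use the super-skew-symmetry of $\{\cdot,\cdots,\cdot\}$ on its first $n-1$ entries to transport $[x_1,\cdots,x_n]^C$ past $\alpha(y_1),\cdots,\alpha(y_{n-2})$ into that last slot, producing the Koszul factor $(-1)^{n-2}(-1)^{|X||Y|^{n-2}}$. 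The same symmetry must be applied inside each summand on the right to rewrite the inner bracket $\{x_i,y_1,\cdots,y_{n-1}\}$ as $\{y_1,\cdots,y_{n-2},x_i,y_{n-1}\}$, which contributes $(-1)^{n-2}(-1)^{|x_i||Y|^{n-2}}$. The crux is then the verification that, after combining these with the coefficients $(-1)^{n-i}(-1)^{|x_i||X|^n_{i+1}}$ of \eqref{n-Hom-pre-Lie-sup 2} and the definition \eqref{eq:ncc}, the $i$-dependent parts of these signs cancel and leave the single overall factor $(-1)^{n-2}(-1)^{|X||Y|^{n-2}}$ on both sides, so that \eqref{n-Hom-pre-Lie-sup 2} becomes precisely \eqref{repr-n-Hom-Lie2}. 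I expect this Koszul-sign reconciliation to be the only real obstacle.

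For the converse, suppose $(\mathcal{A},[\cdot,\cdots,\cdot]^C,\alpha)$ is an $n$-Hom-Lie superalgebra and $(\mathcal{A},L,\alpha)$ is a representation of it; then \eqref{compatible-rho-alpha}, \eqref{repr-n-Hom-Lie1} and \eqref{repr-n-Hom-Lie2} all hold. Reading the equivalences of the previous two paragraphs backwards recovers \eqref{n-Hom-pre-Lie-sup 1} and \eqref{n-Hom-pre-Lie-sup 2}; together with the super-skew-symmetry of $\{\cdot,\cdots,\cdot\}$ on its first $n-1$ arguments, this is exactly the definition of an $n$-Hom-pre-Lie superalgebra, which completes the proof.
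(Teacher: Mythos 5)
Your proposal is correct; note that the paper itself offers no argument here (its proof reads, in full, ``We skip the straightforward proof''), and your translation of \eqref{n-Hom-pre-Lie-sup 1}--\eqref{n-Hom-pre-Lie-sup 2} into \eqref{repr-n-Hom-Lie1}--\eqref{repr-n-Hom-Lie2} via $L$ is exactly the routine verification being skipped --- I checked your Koszul bookkeeping for \eqref{n-Hom-pre-Lie-sup 2}: transporting the bracket past $n-2$ slots and $x_i$ past $y_1,\dots,y_{n-2}$ produces, after dividing out the common factor $(-1)^{n-2}(-1)^{|X||Y|^{n-2}}$, precisely the sign $(-1)^{|Y|^{n-2}(|X|+|x_i|)+|x_i||X|_{i+1}}$ of \eqref{repr-n-Hom-Lie2} under the needed relabeling of variables. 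You are also right to flag that \eqref{compatible-rho-alpha} amounts to multiplicativity of $\alpha$ with respect to $\{\cdot,\cdots,\cdot\}$, which the paper's definition of $n$-Hom-pre-Lie superalgebra does not literally contain and must be read off from the standing multiplicativity convention --- your proof makes this hidden hypothesis explicit, which is an improvement over the paper's silence.
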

\begin{proof}
We skip the straightforward proof.
\end{proof}
\begin{pro}\label{pro:npreLieT}
Let $T:V\rightarrow A$ be an $\mathcal O$-operator on an $n$-Hom-Lie superalgebra $(\mathcal{A},[\cdot,\cdots,\cdot],\alpha)$ with respect to the representation $(V,\rho,\alpha_V)$ . Then there exists an $n$-Hom-pre-Lie superalgebra structure on $V$ given by
\begin{equation}\label{n-pre-Lie-O-operator}
\{u_1,\cdots,u_n\}_T=\rho(Tu_1,\cdots,Tu_{n-1})u_n,\quad\forall ~ u_i\in \mathcal{H}(V),1\leq i\leq n.
\end{equation}
In particular; If $V=\mathcal{A}$, let $P:\mathcal{A}\rightarrow \mathcal{A}$ be a Rota-Baxter operator of weight zero associated to $(\mathcal{A},ad)$. Then the compatible $n$-Hom-pre-Lie superalgebra on $\mathcal{A}$ is given by
\begin{equation}\label{eq:rott}
\{x_1,\cdots,x_n\}_P=[P(x_1),\cdots,P(x_{n-1}),x_n],
\end{equation}
for any $x_1,\cdots,x_n\in \mathcal{H}(\mathcal{A})$.
\end{pro}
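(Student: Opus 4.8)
The plan is to check that the operation $\{\cdot,\cdots,\cdot\}_T$ of Eq.~\eqref{n-pre-Lie-O-operator} fulfills every clause in the definition of an $n$-Hom-pre-Lie superalgebra, with structure map $\alpha_V$. Evenness of $\{\cdot,\cdots,\cdot\}_T$ is immediate from the evenness of $T$ and $\rho$, and super-skew-symmetry in the first $(n-1)$ slots is inherited verbatim from that of $\rho$: for $i+1\leq n-1$ the interchange of $u_i$ and $u_{i+1}$ turns $\rho(\cdots,Tu_i,Tu_{i+1},\cdots)u_n$ into $-(-1)^{|u_i||u_{i+1}|}\rho(\cdots,Tu_{i+1},Tu_i,\cdots)u_n$. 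The whole computation rests on one identity: writing each $\{\cdot,\cdots,\cdot\}_T$ inside the subadjacent bracket \eqref{eq:ncc} through $\rho$ shows that the argument of $T$ on the right-hand side of the $\mathcal{O}$-operator equation \eqref{eq:Ooperator} is exactly $[u_1,\cdots,u_n]^C_T$, whence
\begin{equation*}
T\big([u_1,\cdots,u_n]^C_T\big)=[Tu_1,\cdots,Tu_n],\qquad u_i\in\mathcal{H}(V),
\end{equation*}
where $[\cdot,\cdots,\cdot]^C_T$ denotes the bracket attached by \eqref{eq:ncc} to $\{\cdot,\cdots,\cdot\}_T$. Together with $\alpha\circ T=T\circ\alpha_V$, this lets me turn every $\alpha Tu_i$ into $T\alpha_V u_i$ and every inner Lie bracket $[Tu_1,\cdots,Tu_{n-1},Tv_i]$ into $T([u_1,\cdots,u_{n-1},v_i]^C_T)$, thus back into a $\{\cdot,\cdots,\cdot\}_T$.

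For identity \eqref{n-Hom-pre-Lie-sup 1} I would expand its left-hand side as $\rho(\alpha Tu_1,\cdots,\alpha Tu_{n-1})\rho(Tv_1,\cdots,Tv_{n-1})v_n$ and apply the representation axiom \eqref{repr-n-Hom-Lie1} with $x_k=Tu_k$, $y_k=Tv_k$. The commutator term of \eqref{repr-n-Hom-Lie1} reproduces, after $\alpha T=T\alpha_V$, the last term $(-1)^{|Y|^{n-1}|X|^{n-1}}\{\alpha_V v_1,\cdots,\alpha_V v_{n-1},\{u_1,\cdots,u_{n-1},v_n\}_T\}_T$ of \eqref{n-Hom-pre-Lie-sup 1}; each remaining summand carries the inner bracket $[Tu_1,\cdots,Tu_{n-1},Tv_i]$, which the key identity converts into the $i$-th term of the sum in \eqref{n-Hom-pre-Lie-sup 1}, the signs $(-1)^{|X|^{n-1}|Y|^{i-1}}$ matching on the nose.

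Identity \eqref{n-Hom-pre-Lie-sup 2} is proved the same way but through axiom \eqref{repr-n-Hom-Lie2}. After replacing $T([u_1,\cdots,u_n]^C_T)$ by $[Tu_1,\cdots,Tu_n]$ in the left-hand side, one uses skew-symmetry of $\rho$ to move this length-$n$ bracket from the first to the last argument of $\rho$ (matching the shape of \eqref{repr-n-Hom-Lie2}), applies \eqref{repr-n-Hom-Lie2} with $x_k=Tv_k$ and $y_k=Tu_k$, and finally moves each $Tu_i$ back to the first slot of the inner $\rho$, producing $\{\alpha_V u_1,\cdots,\widehat{u}_i,\cdots,\alpha_V u_n,\{u_i,v_1,\cdots,v_{n-1}\}_T\}_T$. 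With \eqref{n-Hom-pre-Lie-sup 1}--\eqref{n-Hom-pre-Lie-sup 2} in hand, the particular case is immediate: taking $V=\mathcal{A}$, $T=P$ and $\rho=ad$ (the adjoint representation of Example \ref{alpha-s-adj-rep}, for which a weight-zero Rota--Baxter operator is precisely an $\mathcal{O}$-operator) gives $\{x_1,\cdots,x_n\}_P=ad(Px_1,\cdots,Px_{n-1})x_n=[Px_1,\cdots,Px_{n-1},x_n]$, i.e.\ Eq.~\eqref{eq:rott}.

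The only real difficulty is the Koszul sign bookkeeping in \eqref{n-Hom-pre-Lie-sup 2}. The two reorderings of $\rho$ (passing the length-$n$ bracket, of parity $|U|$, and then $Tu_i$, of parity $|u_i|$, across the block $Tv_1,\cdots,Tv_{n-2}$ of parity $|V|^{n-2}$) contribute $(-1)^{|V|^{n-2}(|U|+|u_i|)}$; one must check this cancels the factor $(-1)^{|V|^{n-2}(|U|+|u_i|)}$ inside the sign $(-1)^{n-i}(-1)^{|V|^{n-2}(|U|+|u_i|)+|u_i||U|_{i+1}}$ supplied by \eqref{repr-n-Hom-Lie2}, leaving exactly $(-1)^{n-i}(-1)^{|u_i||U|^n_{i+1}}$, the sign demanded in \eqref{n-Hom-pre-Lie-sup 2}. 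Once this is confirmed the verification closes.
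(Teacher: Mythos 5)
Your proposal is correct and takes essentially the same route as the paper: rewrite $\{\cdot,\cdots,\cdot\}_T$ through $\rho$, recognize the $\mathcal{O}$-operator identity \eqref{eq:Ooperator} as saying $T([u_1,\cdots,u_n]^C_T)=[Tu_1,\cdots,Tu_n]$ for the subadjacent bracket of $\{\cdot,\cdots,\cdot\}_T$, and then deduce \eqref{n-Hom-pre-Lie-sup 1} and \eqref{n-Hom-pre-Lie-sup 2} from the representation axioms \eqref{repr-n-Hom-Lie1} and \eqref{repr-n-Hom-Lie2} together with $\alpha\circ T=T\circ\alpha_V$. Your explicit verification of \eqref{n-Hom-pre-Lie-sup 2} with the cancellation of $(-1)^{|V|^{n-2}(|U|+|u_i|)}$, and of the case $V=\mathcal{A}$, $\rho=ad$, merely fills in the steps the paper leaves as ``by the same way'' and ``obvious''.
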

\begin{proof}
Let $u_i,v_i\in\mathcal{H}(V),\;1\leq i\leq n$, then by using \eqref{repr-n-Hom-Lie1}, \eqref{eq:Ooperator} and \eqref{n-pre-Lie-O-operator}, we have:

{\small\begin{align*}
& \{\alpha_V(u_1),\cdots,\alpha_V(u_{n-1}),\{v_1,\cdots,v_n\}_T\}_T-(-1)^{|U|^{n-1}|V|^{n-1}}\{\alpha_V(v_1),\cdots,\alpha_V(v_{n-1}),\{u_1,\cdots,u_n\}_T\}_T\\&= \{\alpha_V(u_1),\cdots,\alpha_V(u_{n-1}),\rho(Tv_1,\cdots,Tv_{n-1})(v_n)\}_T-(-1)^{|U|^{n-1}|V|^{n-1}}\{\alpha_V(v_1),\cdots,\alpha_V(v_{n-1}),\rho(Tu_1,\cdots,Tu_{n-1})(u_n)\}_T\\&=\rho\big(T(\alpha_V(u_1)),\cdots,T(\alpha_V(u_{n-1}))\big)\rho\big(Tv_1,\cdots,Tv_{n-1}\big)(v_n)-(-1)^{|U|^{n-1}|V|^{n-1}}\rho\big(T(\alpha_V(v_1)),\cdots,T(\alpha_V(v_{n-1}))\big)\rho\big(Tu_1,\cdots,Tu_{n-1}\big)(u_n)\\&= \rho\big(\alpha(Tu_1),\cdots,\alpha(Tu_{n-1})\big)\rho\big(Tv_1,\cdots,Tv_{n-1}\big)(v_n)-(-1)^{|U|^{n-1}|V|^{n-1}}\rho\big(\alpha(Tv_1),\cdots,\alpha(Tv_{n-1})\big)\rho\big(Tu_1,\cdots,Tu_{n-1}\big)(u_n)\\&=\displaystyle\sum_{i=1}^{n-1}(-1)^{|U|^{n-1}|V|^{i-1}}\rho\big(\alpha(Tv_1),\cdots,\alpha(Tv_{i-1}),[Tu_1,\cdots,Tu_{n-1},Tv_i],\cdots,\alpha(Tv_{n-1})\big)\alpha_V\\&=\displaystyle\sum_{i=1}^{n-1}(-1)^{|U|^{n-1}|V|^{i-1}}\rho\Big(\alpha(Tv_1),\cdots,\alpha(Tv_{i-1}),\displaystyle T\Big(\sum_{j=1}^{n-1}(-1)^{n-i}(-1)^{|u_j||U|_{j+1}}\rho(Tu_1,\cdots,\hat{Tu_j},\cdots Tu_{n-1},Tv_i)u_j\Big),\cdots,\alpha(Tv_{n-1})\Big)\alpha_V\\&+ \displaystyle\sum_{i=1}^{n-1}(-1)^{|U|^{n-1}|V|^{i-1}}\rho\Big(\alpha(Tv_1),\cdots,\alpha(Tv_{i-1}),T(\rho(Tu_1,\cdots, Tu_{n-1})v_i),\cdots,\alpha(Tv_{n-1})\Big)\alpha_V\\&= \displaystyle\sum_{i=1}^{n-1}(-1)^{|U|^{n-1}|V|^{i-1}}\rho\Big(T(\alpha_V(v_1)),\cdots,T(\alpha_V(v_{i-1})),T([u_1,\cdots,u_{n-1},v_i]_V^C),\cdots,T(\alpha_V(v_{n-1}))\Big)\alpha_V\\&=\displaystyle\sum_{i=1}^{n-1}(-1)^{|U|^{n-1}|V|^{i-1}}\{\alpha_V(v_1),\cdots,\alpha_V(v_{i-1}),[u_1,\cdots,u_{n-1},v_i]_V^C,\cdots,\alpha_V(v_{n-1})\}_T,
\end{align*}}
which gives that the identity \eqref{n-Hom-pre-Lie-sup 1} is satisfied on $V$. By the same way we show that the identity \eqref{n-Hom-pre-Lie-sup 2} is satisfied. Then $(V,\rho,\alpha_V)$ is an $n$-Hom-pre-Lie superalgebra.\\
If $V=\mathcal{A}$, the result is obvious.
\end{proof}
\begin{cor}\label{n-Hom-pre-Lie T(V)}
With the above conditions,  $(V,[\cdot,\cdots,\cdot]^C,\alpha)$ is an $n$-Hom-Lie superalgebra as the sub-adjacent $n$-Hom-Lie superalgebra of the $n$-Hom-pre-Lie
superalgebra given in Proposition \ref{pro:npreLieT}, and $T$ is an $n$-Hom-Lie superalgebra morphism from $(V,[\cdot,\cdots,\cdot]^C,\alpha)$ to $(\mathcal{A},[\cdot,\cdots,\cdot],\alpha)$. Furthermore,
$T(V)=\{Tv\;|\;v\in V\}\subset A$ is an $n$-Hom-Lie subalgebra of $\mathcal{A}$ and there is an induced $n$-Hom-pre-Lie superalgebra structure $\{\cdot,\cdots,\cdot\}_{T(V)}$ on
$T(V)$ given by
\begin{equation}
\{Tu_1,\cdots,Tu_{n}\}_{T(V)}:=T\{u_1,\cdots,u_n\},\quad\;\forall u_i\in \mathcal{H}(V),1\leq i\leq n.
\end{equation}
\end{cor}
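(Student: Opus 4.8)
The plan is to dispatch the three assertions in turn, since each reduces to a result already in place. For the first, I would simply note that Proposition~\ref{pro:npreLieT} endows $V$ with the $n$-Hom-pre-Lie superalgebra structure $\{\cdot,\cdots,\cdot\}_T$, so Proposition-Definition~\ref{pro:subadj} applies verbatim on $V$ and produces the sub-adjacent $n$-Hom-Lie superalgebra $(V,[\cdot,\cdots,\cdot]^C,\alpha_V)$; there is nothing more to do here.

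For the morphism claim I would verify the two defining conditions. Compatibility with the structure maps, $T\circ\alpha_V=\alpha\circ T$, is part of the hypothesis that $T$ is an $\mathcal{O}$-operator. For compatibility with the brackets I would expand $[u_1,\cdots,u_n]^C$ via Eq.~\eqref{eq:ncc} applied to $\{\cdot,\cdots,\cdot\}_T$, apply $T$, and substitute $\{u_1,\cdots,\widehat{u_i},\cdots,u_n,u_i\}_T=\rho(Tu_1,\cdots,\widehat{Tu_i},\cdots,Tu_n)u_i$ from \eqref{n-pre-Lie-O-operator}. Since $|U|_{i+1}=|U|^n_{i+1}$ in the notation fixed at the outset, the resulting expression is exactly the right-hand side of the $\mathcal{O}$-operator identity \eqref{eq:Ooperator}, so $T[u_1,\cdots,u_n]^C=[Tu_1,\cdots,Tu_n]$. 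The subalgebra claim is then immediate: $\alpha(Tu)=T(\alpha_V u)\in T(V)$ and $[Tu_1,\cdots,Tu_n]=T[u_1,\cdots,u_n]^C\in T(V)$, so $T(V)$ is closed under both $\alpha$ and the bracket.

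The only genuinely delicate point is the last assertion, where one must check that $\{Tu_1,\cdots,Tu_n\}_{T(V)}:=T\{u_1,\cdots,u_n\}_T$ is independent of the chosen preimages; equivalently, that $\ker T$ is an ideal of $(V,\{\cdot,\cdots,\cdot\}_T,\alpha_V)$. Invariance under $\alpha_V$ and under the first $n-1$ arguments is free: $\alpha_V(\ker T)\subseteq\ker T$ follows from $T\alpha_V=\alpha T$, and if $u_i\in\ker T$ with $i\le n-1$ then $\{u_1,\cdots,u_n\}_T=\rho(Tu_1,\cdots,Tu_{n-1})u_n$ vanishes by multilinearity of $\rho$, since $Tu_i=0$ occupies a slot. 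The remaining case, a kernel element $w$ in the last slot, is where I would invoke the $\mathcal{O}$-operator relation itself: applying \eqref{eq:Ooperator} to $(u_1,\cdots,u_{n-1},w)$, the left-hand side $[Tu_1,\cdots,Tu_{n-1},Tw]$ is zero because $Tw=0$, while on the right every term with $i<n$ carries a factor $\rho(\cdots,Tw,\cdots)=0$, so only the $i=n$ term survives and we are left with $0=T\rho(Tu_1,\cdots,Tu_{n-1})w=T\{u_1,\cdots,u_{n-1},w\}_T$. Hence $\ker T$ is an ideal.

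With well-definedness secured, $T$ descends to a linear isomorphism $V/\ker T\to T(V)$, and transporting the quotient $n$-Hom-pre-Lie structure along it yields $\{\cdot,\cdots,\cdot\}_{T(V)}$. The evenness, the super-skew-symmetry on the first $n-1$ entries, and the axioms \eqref{n-Hom-pre-Lie-sup 1}--\eqref{n-Hom-pre-Lie-sup 2} then hold on $T(V)$ because they already hold for $\{\cdot,\cdots,\cdot\}_T$ on $V$ and are preserved by the surjective even map $T$ together with $T\alpha_V=\alpha T$. I expect the well-definedness step to be the main obstacle, precisely because it is the one place where the $\mathcal{O}$-operator relation, and not mere multilinearity, is indispensable.
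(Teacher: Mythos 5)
Your proposal is correct, and it is worth noting that the paper offers no proof of Corollary~\ref{n-Hom-pre-Lie T(V)} at all: the statement is placed immediately after Proposition~\ref{pro:npreLieT} as a direct consequence, with the morphism property and the well-definedness of $\{\cdot,\cdots,\cdot\}_{T(V)}$ left tacit. Your argument supplies exactly the missing content. The first two assertions are, as you say, routine: Proposition-Definition~\ref{pro:subadj} applied to $(V,\{\cdot,\cdots,\cdot\}_T,\alpha_V)$ gives the sub-adjacent structure (the corollary's ``$(V,[\cdot,\cdots,\cdot]^C,\alpha)$'' should indeed read $\alpha_V$, which you silently correct), and comparing Eq.~\eqref{eq:ncc} expanded through \eqref{n-pre-Lie-O-operator} with the $\mathcal{O}$-operator identity \eqref{eq:Ooperator} yields $T[u_1,\cdots,u_n]^C=[Tu_1,\cdots,Tu_n]$ on the nose, since $|U|_{i+1}=|U|^n_{i+1}$. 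Your identification of the well-definedness of the induced product as the one genuinely delicate point is accurate, and your treatment of it is the right one: a kernel element in any of the first $n-1$ slots kills the bracket outright by multilinearity of $\rho$, while for the last slot the specialization of \eqref{eq:Ooperator} to $(u_1,\cdots,u_{n-1},w)$ with $Tw=0$ leaves only the $i=n$ term (whose sign is $(-1)^{n-n}=1$ and whose parity factor $(-1)^{|w||U|_{n+1}}$ is trivial since $|U|_{n+1}$ is an empty sum), giving $T\{u_1,\cdots,u_{n-1},w\}_T=0$; together with $\alpha_V(\ker T)\subseteq\ker T$ this makes $\ker T$ an ideal, and multilinearity handles simultaneous changes of preimages in several slots. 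Transporting the quotient structure along $V/\ker T\xrightarrow{\;\sim\;}T(V)$ then gives the axioms \eqref{n-Hom-pre-Lie-sup 1}--\eqref{n-Hom-pre-Lie-sup 2} on $T(V)$ for free, since every instance of them on $T(V)$ is $T$ applied to the corresponding instance on $V$. In short: where the paper asserts, you prove, and the proof is the one the corollary needs.
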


\begin{pro}\label{pro:preLieOoper}
Let $(\mathcal{A},[\cdot,\cdots,\cdot],\alpha)$ be an $n$-Hom-Lie superalgebra. Then there exists a compatible $n$-Hom-pre-Lie superalgebra if and only if there exists an invertible $\mathcal O$-operator $T:V\rightarrow \mathcal{A}$ with respect
to a representation $(V,\rho,\alpha_V)$. Furthermore, the compatible $n$-Hom-pre-Lie structure on $\mathcal{A}$ is given by
\begin{equation}
\{x_1,\cdots,x_n\}_{A}=T\rho(x_1,\cdots,x_{n-1})T^{-1}(x_n),\;\forall x_i\in \mathcal{H}(\mathcal{A}),1\leq i\leq n.
\end{equation}
\end{pro}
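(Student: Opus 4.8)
The plan is to prove the two implications separately, handling the ``only if'' direction with the identity operator and the ``if'' direction by transporting a structure along $T$.

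For the ``only if'' direction, I would assume a compatible $n$-Hom-pre-Lie superalgebra $(\mathcal{A},\{\cdot,\cdots,\cdot\},\alpha)$ is given, so that its sub-adjacent bracket $[\cdot,\cdots,\cdot]^C$ from Eq.~\eqref{eq:ncc} coincides with the prescribed bracket $[\cdot,\cdots,\cdot]$. Take $V=\mathcal{A}$, $\alpha_V=\alpha$, and $\rho=L$ the left multiplication of Eq.~\eqref{eq:R}; the proposition asserting that $(\mathcal{A},L,\alpha)$ is a representation of the sub-adjacent $n$-Hom-Lie superalgebra guarantees $(V,\rho,\alpha_V)$ is a representation of $(\mathcal{A},[\cdot,\cdots,\cdot],\alpha)$. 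It then suffices to check that $T=\mathrm{id}_{\mathcal{A}}$ is an $\mathcal{O}$-operator in the sense of Definition~\ref{defi:O-operator}: the intertwining condition $\alpha\circ T=T\circ\alpha$ is trivial, while the defining identity \eqref{eq:Ooperator} reduces to
\[
[u_1,\cdots,u_n]=\sum_{i=1}^{n}(-1)^{n-i}(-1)^{|u_i||U|_{i+1}}L(u_1,\cdots,\widehat{u_i},\cdots,u_n)(u_i),
\]
whose right-hand side equals $[u_1,\cdots,u_n]^C$ by \eqref{eq:R} and \eqref{eq:ncc}, and this is $[u_1,\cdots,u_n]$ by compatibility. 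Since $\mathrm{id}_{\mathcal{A}}$ is invertible, this yields the required invertible $\mathcal{O}$-operator.

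For the ``if'' direction, I would start from an invertible $\mathcal{O}$-operator $T:V\to\mathcal{A}$ with respect to $(V,\rho,\alpha_V)$. By Proposition~\ref{pro:npreLieT} the bracket $\{u_1,\cdots,u_n\}_T=\rho(Tu_1,\cdots,Tu_{n-1})u_n$ makes $(V,\{\cdot,\cdots,\cdot\}_T,\alpha_V)$ an $n$-Hom-pre-Lie superalgebra. I would transport this structure to $\mathcal{A}$ by setting $\{x_1,\cdots,x_n\}_{A}:=T\{T^{-1}x_1,\cdots,T^{-1}x_n\}_T$; substituting $u_i=T^{-1}x_i$ and using $TT^{-1}=\mathrm{id}$ recovers the stated formula $\{x_1,\cdots,x_n\}_{A}=T\rho(x_1,\cdots,x_{n-1})T^{-1}(x_n)$. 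Because $T$ is an even linear isomorphism intertwining the twists ($\alpha T=T\alpha_V$, hence $\alpha_V=T^{-1}\alpha T$), it introduces no extra Koszul signs and carries the identities \eqref{n-Hom-pre-Lie-sup 1}--\eqref{n-Hom-pre-Lie-sup 2} verbatim from $V$ to $\mathcal{A}$, so $(\mathcal{A},\{\cdot,\cdots,\cdot\}_A,\alpha)$ is an $n$-Hom-pre-Lie superalgebra.

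The remaining, and only genuinely substantive, step is to verify compatibility, namely that the sub-adjacent bracket of $\{\cdot,\cdots,\cdot\}_A$ equals the original $[\cdot,\cdots,\cdot]$. Applying \eqref{eq:ncc} to $\{\cdot,\cdots,\cdot\}_A$ and pulling $T$ out of every summand (legitimate since $T$ is even, whence $|x_i|=|T^{-1}x_i|$ and all signs are preserved) gives $[x_1,\cdots,x_n]^C_{A}=T[T^{-1}x_1,\cdots,T^{-1}x_n]^C_{V}$, where $[\cdot,\cdots,\cdot]^C_{V}$ denotes the sub-adjacent bracket of $\{\cdot,\cdots,\cdot\}_T$. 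By Corollary~\ref{n-Hom-pre-Lie T(V)}, $T$ is an $n$-Hom-Lie superalgebra morphism from $(V,[\cdot,\cdots,\cdot]^C_{V},\alpha_V)$ to $(\mathcal{A},[\cdot,\cdots,\cdot],\alpha)$, so $T[T^{-1}x_1,\cdots,T^{-1}x_n]^C_{V}=[x_1,\cdots,x_n]$. Hence $[\cdot,\cdots,\cdot]^C_{A}=[\cdot,\cdots,\cdot]$, the transported structure is compatible, and the proof is complete. The main obstacle is precisely this sign bookkeeping in the transport of the sub-adjacent bracket, which the morphism property of Corollary~\ref{n-Hom-pre-Lie T(V)} resolves cleanly.
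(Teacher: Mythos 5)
Your proof is correct and follows essentially the same route as the paper, whose entire proof is to invoke Proposition \ref{pro:npreLieT} and Corollary \ref{n-Hom-pre-Lie T(V)} with $T(V)=\mathcal{A}$ --- exactly your ``if'' direction. You additionally spell out the ``only if'' direction ($T=\mathrm{id}_{\mathcal{A}}$ as an invertible $\mathcal{O}$-operator with respect to $(\mathcal{A},L,\alpha)$, using that \eqref{eq:Ooperator} then reduces to \eqref{eq:ncc}) and the transport-of-structure sign bookkeeping, both of which the paper leaves implicit under ``direct computation.''
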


\begin{proof}
This is a direct computation, we apply Proposition \ref{pro:npreLieT} and corollary \ref{n-Hom-pre-Lie T(V)} for $T(V)=\mathcal{A}$.
\end{proof}

\subsection{ Representations of $n$-Hom-pre-Lie superalgebras}

In this subsection, we introduce the notion of a representation of an $n$-Hom-pre-Lie superalgebras which is the Hom-super case of \cite{Hajjaji-Chtioui-Mabrouk-Makhlouf}, so we give the construction of the corresponding semi-direct product $n$-Hom-pre-Lie superalgebra and we give some other results related this notion.  

\begin{defi}\label{defrep}
 Let $(\mathcal{A},\{\cdot,\cdots,\cdot\},\alpha)$ be an $n$-Hom-pre-Lie superalgebra. A  representation of $(\mathcal{A},\{\cdot,\cdots,\cdot\},\alpha)$ on a $\mathbb{Z}_2$-graded vector space $V$ is the given of a triple $(l,r,\alpha_V)$, where $l:\wedge^{n-1} \mathcal{A} \rightarrow gl(V)$ is a representation of the $n$-Hom-Lie superalgebra $\mathcal{A}^c$ on $V$,  $r:\mathcal{A}\times\cdots\times\mathcal{A} \rightarrow gl(V)$ is an even $(n-1)$-linear map super-skew-symmetric on the first $(n-2)$ terms  and $\alpha_V:V\to V$ is  an even linear maps such that  for all $x_1,\cdots,x_{n},y_1,\cdots,y_n\in \mathcal{H}(\mathcal{A})$, the following identities holds:
 \begin{align}
 &\bullet \alpha_Vr(x_1,\cdots,x_{n-1})=r(\alpha(x_1),\cdots,\alpha(x_{n-1}))\alpha_V,\label{cond-repres-r-1}\\
&\bullet l(\alpha(x_1),\cdots,\alpha(x_{n-1}))r(y_1,\cdots,y_{n-1}) =(-1)^{|X|^{n-1}|Y|^{n-1}}r(\alpha(y_1),\cdots,\alpha(y_{n-1}))\mu(x_1,\cdots,x_{n-1})\nonumber\\
&+\sum_{i=1}^{n-2}(-1)^{|X|^{n-1}|Y|^{i-1}}r(\alpha(y_1),\cdots,\alpha(y_{i-1}),[x_1,\cdots,x_{n-1},y_i]^C,\alpha(y_{i+1}),\cdots,\alpha(y_{n-1}))\alpha_V\label{cond-repres-l-r-1}\\ \nonumber&+(-1)^{|X|^{n-1}|Y|^{n-2}}r(\alpha(y_1),\cdots,\alpha(y_{n-2}),\{x_1,\cdots,x_{n-1},y_{n-1}\})\alpha_V, \\
&\bullet r([x_1,\cdots,x_n]^C,\alpha(y_{1}),\cdots,\alpha(y_{n-2}))\alpha_V=\sum_{i=1}^{n}(-1)^{n-i}(-1)^{|x_i||X|_{i+1}^n}l(\alpha(x_1),\cdots,\widehat{\alpha(x_i)},\cdots,\alpha(x_n))r(x_i,y_{1},\cdots,y_{n-2}),\label{cond-repres-l-r-2}\\
&\bullet r(\alpha(x_1),\cdots,\alpha(x_{n-2}),\{y_{1},\cdots,y_{n}\})\alpha_V=(-1)^{|X|^{n-2}|Y|^{n-1}}l(\alpha(y_{1}),\cdots,\alpha(y_{n-1}))r(x_1,\cdots,x_{n-2},y_{n})\nonumber\\
&+\sum_{i=1}^{n-1}(-1)^{i+1}(-1)^{(|X|^{n-2}+|y_i|)|Y|^{n}_{i+1}+|X|^{n-2}|Y|^{i-1}} r(\alpha(y_{1}),\cdots,\widehat{\alpha(y_i)},\cdots,\alpha(y_{n}))\mu(x_1,\cdots,x_{n-2},y_i),\label{cond-repres-l-r-3}\\
&\bullet r(\alpha(y_1),\cdots,\alpha(y_{n-1}))\mu(x_1,\cdots,x_{n-1})=(-1)^{|X|^{n-1}|Y|^{n-1}}l(\alpha(x_1),\cdots,\alpha(x_{n-1}))r(y_1,\cdots,y_{n-1})\nonumber\\
&+\sum_{i=1}^{n-1}(-1)^{i}(-1)^{|x_i||x|^{n-1}_{i+1}}r(\alpha(x_1),\cdots,\widehat{\alpha(x_i)},\cdots,\alpha(x_{n-1}),\{x_i,y_1,\cdots,y_{n-1}\})\alpha_V,\label{cond-repres-l-r-4}
\end{align}
where $\quad \mu(x_1,\cdots,x_{n-1})=l(x_1,\cdots,x_{n-1})+\displaystyle\sum_{i=1}^{n-1}(-1)^{i}(-1)^{|x_i||X|^{n-1}_{i+1}}r(x_1,\cdots,\widehat{x_i},\cdots,x_{n-1},x_i)$.
\end{defi} 

Let $(\mathcal{A},\{\cdot,\cdots,\cdot\},\alpha)$ be an $n$-Hom-pre-Lie superalgebra and $(l,\alpha_V)$ a representation of the sub-adjacent $n$-Hom-pre-Lie superalgebra
$\mathcal{A}^c$ on $V$ . Then $(l, r,\alpha_V)$ is a representation of the $n$-Hom-pre-Lie superalgebra
$(\mathcal{A},\{\cdot,\cdots,\cdot\},\alpha)$ on the $\mathbb{Z}_2$-graded vector space $V$.
It is obvious that $(\mathcal{A},L,R,\alpha)$ is a representation of an $n$-Hom-pre-Lie superalgebra on itself, which is called the adjoint representation.\\

\begin{thm}
Let $(V,l,r)$ be a representation of an $n$-pre-Lie superalgebra $(\mathcal{A},\{\cdot,\cdots,\cdot\})$. Let $\alpha_V\in gl(V)$ and $\alpha\in gl(\mathcal{A})$ two morphisms such that 
$$\alpha_Vl(x_1,\cdots,x_{n-1})=l(\alpha(x_1),\cdots,\alpha(x_{n-1}))\alpha_V,\;\;\;\alpha_Vr(x_1,\cdots,x_{n-1})=r(\alpha(x_1),\cdots,\alpha(x_{n-1}))\alpha_V,$$ 
for all $x_i\in\mathcal{H}(\mathcal{A}),\;1\leq i\leq n-1$.
Then $(V,\widetilde{l},\widetilde{r},\alpha_V)$ is a representation on the $n$-Hom-pre-Lie superalgebras $(\mathcal{A},\{\cdot,\cdots,\cdot\}_\alpha^C,\alpha)$, where $\widetilde{l}=\alpha_V\circ l$, $\widetilde{r}=\alpha_V\circ r$ and $\{\cdot,\cdots,\cdot\}_\alpha^C=\alpha\circ\{\cdot,\cdots,\cdot\}^C$.
\end{thm}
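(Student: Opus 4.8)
The plan is to read this as a Yau-type twisting statement and verify, one axiom at a time, the defining conditions of Definition~\ref{defrep} for the twisted triple $(\widetilde l,\widetilde r,\alpha_V)$ relative to the twisted product, reducing each to the corresponding \emph{untwisted} identity that $(V,l,r)$ already satisfies as a representation of the $n$-pre-Lie superalgebra $(\mathcal A,\{\cdot,\cdots,\cdot\})$. Before touching the axioms I would record three structural facts. First, the twisted subadjacent bracket built from $\{\cdot,\cdots,\cdot\}_\alpha^C=\alpha\circ\{\cdot,\cdots,\cdot\}$ via Eq.~\eqref{eq:ncc} is simply $[\cdot,\cdots,\cdot]^C_\alpha=\alpha\circ[\cdot,\cdots,\cdot]^C$, because $\alpha$ is linear and factors out of every summand of \eqref{eq:ncc} without disturbing the signs. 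Second, the auxiliary operator attached to the twisted data, call it $\widetilde\mu$, equals $\alpha_V\circ\mu$: this is immediate from $\widetilde l=\alpha_V\circ l$, $\widetilde r=\alpha_V\circ r$ and the $\mathbb Z_2$-linearity of $\alpha_V$, since the sign weights in the definition of $\mu$ depend only on the parities $|x_i|$, which $\alpha$ preserves. Third, that $(\mathcal A,\{\cdot,\cdots,\cdot\}_\alpha^C,\alpha)$ is an $n$-Hom-pre-Lie superalgebra (the algebra-level Yau twist) may be taken as known, so only the representation conditions remain.

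The engine of the argument is that, because $\alpha$ and $\alpha_V$ are \emph{even} morphisms, the two intertwining hypotheses propagate to all the twisted operators with no new signs. For example, for condition \eqref{cond-repres-r-1} one has $\widetilde r(\alpha(x_1),\cdots,\alpha(x_{n-1}))\alpha_V=\alpha_V\,r(\alpha(x_1),\cdots,\alpha(x_{n-1}))\alpha_V=\alpha_V^2\,r(x_1,\cdots,x_{n-1})=\alpha_V\,\widetilde r(x_1,\cdots,x_{n-1})$, where the middle step uses the hypothesis $r(\alpha(x_1),\cdots,\alpha(x_{n-1}))\alpha_V=\alpha_V\,r(x_1,\cdots,x_{n-1})$. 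The identical manipulation shows that $\widetilde l$ satisfies \eqref{compatible-rho-alpha}, and then the twisted forms of \eqref{repr-n-Hom-Lie1}--\eqref{repr-n-Hom-Lie2} follow by the same pull-through mechanism, so that $\widetilde l$ is a representation of the subadjacent $n$-Hom-Lie superalgebra $(\mathcal A,[\cdot,\cdots,\cdot]^C_\alpha,\alpha)$, as Definition~\ref{defrep} demands. In general, for each axiom I would substitute $\widetilde l=\alpha_V l$, $\widetilde r=\alpha_V r$, $\{\cdot,\cdots,\cdot\}_\alpha^C=\alpha\{\cdot,\cdots,\cdot\}$, $[\cdot,\cdots,\cdot]^C_\alpha=\alpha[\cdot,\cdots,\cdot]^C$ and $\widetilde\mu=\alpha_V\mu$, then repeatedly apply the two intertwining relations to migrate every $\alpha$ and interior $\alpha_V$ out to a single outermost $\alpha_V$ (or $\alpha_V^2$); what remains bracketed inside is exactly the untwisted axiom for $(l,r)$, which vanishes by hypothesis, so the whole identity collapses.

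Concretely I would process the conditions in the order \eqref{cond-repres-r-1}, then the three subadjacent $\widetilde l$-conditions, then \eqref{cond-repres-l-r-1}, \eqref{cond-repres-l-r-2}, \eqref{cond-repres-l-r-3}, \eqref{cond-repres-l-r-4}, since the later identities reuse both the intertwining facts and the relation $\widetilde\mu=\alpha_V\mu$ established earlier. The main obstacle is not conceptual but the sign bookkeeping in \eqref{cond-repres-l-r-3} and \eqref{cond-repres-l-r-4}, which mix $l$, $r$ and $\mu$ across several sign-weighted summands: one must confirm that each Koszul factor, such as $(-1)^{|X|^{n-1}|Y|^{i-1}}$ or $(-1)^{|x_i||X|^{n-1}_{i+1}}$, is the same in the twisted and untwisted identities. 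This holds termwise precisely because $\alpha$ and $\alpha_V$ are degree-preserving, so replacing an argument $x_i$ by $\alpha(x_i)$, or an output by its $\alpha$-image, never alters a parity; hence no reindexing or sign correction is ever needed, and matching the two sides reduces, term by term, to the untwisted hypotheses. \emph{Remark:} the only genuine care is that the twisted product appearing \emph{inside} brackets, e.g. $\{x_1,\cdots,x_{n-1},y_{n-1}\}$ in \eqref{cond-repres-l-r-1}, must first be rewritten as $\alpha\{x_1,\cdots,x_{n-1},y_{n-1}\}$ and the extra $\alpha$ absorbed via the compatibility $\alpha_V r(\alpha(\cdot))=r(\alpha^2(\cdot))\alpha_V$ before the reduction goes through cleanly.
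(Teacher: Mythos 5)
Your proposal is correct and takes essentially the same route as the paper: the paper verifies condition \eqref{cond-repres-r-1} by exactly your pull-through computation, $\alpha_V\widetilde r(x_1,\cdots,x_{n-1})=\alpha_V^2\,r(x_1,\cdots,x_{n-1})=\widetilde r(\alpha(x_1),\cdots,\alpha(x_{n-1}))\alpha_V$, and then settles conditions \eqref{cond-repres-l-r-1}--\eqref{cond-repres-l-r-4} with ``by the same way''. Your auxiliary observations ($\widetilde\mu=\alpha_V\circ\mu$, the twisted subadjacent bracket equal to $\alpha\circ[\cdot,\cdots,\cdot]^C$, and the remark that the even maps $\alpha,\alpha_V$ preserve all Koszul signs so each twisted axiom is just $\alpha_V^2$ applied to the untwisted one) merely make explicit what the paper leaves implicit.
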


\begin{proof}
Let $x_i,y_i\in\mathcal{H}(\mathcal{A}),\;1\leq i\leq n$, then by condition \eqref{cond-repres-r-1}, we have
\begin{align*}
\alpha_V\widetilde{r}(x_1,\cdots,x_{n-1})&=\alpha_V\alpha_Vr(x_1,\cdots,x_{n-1})=\alpha_Vr(\alpha(x_1),\cdots,\alpha(x_{n-1}))\alpha_V\\&=\widetilde{r}(\alpha(x_1),\cdots,\alpha(x_{n-1}))\alpha_V.   
\end{align*}
Then, the condition \eqref{cond-repres-r-1} is satisfied by $\widetilde{r}$. By the same way, we show that the conditions  \eqref{cond-repres-l-r-1}-\eqref{cond-repres-l-r-4} hold. The theorem is proved. 
\end{proof}

\begin{pro}\label{carpre}
Let $(\mathcal{A},\{\cdot,\cdots,\cdot\},\alpha)$ be an $n$-Hom-pre-Lie superalgebra, $V$  a $\mathbb{Z}_2$-graded vector space and $l,r:
\otimes^{n-1}\mathcal{A}\rightarrow  gl(V)$  two even linear
maps. Then $(V,l,r,\alpha_V)$ is a representation of $\mathcal{A}$ if and only if there
is an $n$-Hom-pre-Lie superalgebra structure $($called semi-direct product$)$
on the direct sum $\mathcal{A}\oplus V$ of vector spaces, defined by
\begin{align}
\{x_1+u_1,\cdots,x_n+u_n\}_{\mathcal{A}\oplus V}=&\{x_1,\cdots,x_n\}+l(x_1,\cdots,x_{n-1})(u_n)\nonumber
\\ \label{eq:sum}&+\sum_{i=1}^{n-1}(-1)^{i+1}(-1)^{|x_i||X|_{i+1}^n}r(x_1,\cdots,\widehat{x_i},\cdots,x_n)(u_i),
\end{align}
for $x_i\in\mathcal{H}(\mathcal{A}), u_i\in\mathcal{H}(V), 1\leq i\leq n$. We denote this semi-direct product $n$-Hom-pre-Lie superalgebra by $\mathcal{A}\ltimes_{l,r}^{\alpha_V} V.$
\end{pro}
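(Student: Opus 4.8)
The plan is to verify directly that the bracket $\{\cdot,\cdots,\cdot\}_{\mathcal{A}\oplus V}$ defined by Eq.~\eqref{eq:sum} is even, super-skew-symmetric on its first $(n-1)$ arguments, and satisfies the two defining identities \eqref{n-Hom-pre-Lie-sup 1} and \eqref{n-Hom-pre-Lie-sup 2} with respect to $\alpha+\alpha_V$ if and only if $(V,l,r,\alpha_V)$ satisfies \eqref{cond-repres-r-1}--\eqref{cond-repres-l-r-4}. The structural feature exploited throughout is that the $\mathcal{A}$-projection and the $V$-projection of each identity can be treated separately. Since the projection onto $\mathcal{A}$ of $\{z_1,\cdots,z_n\}_{\mathcal{A}\oplus V}$ equals $\{x_1,\cdots,x_n\}$ for $z_k=x_k+u_k$, the $\mathcal{A}$-component of both identities is nothing but the corresponding identity for $(\mathcal{A},\{\cdot,\cdots,\cdot\},\alpha)$ and holds automatically; hence all the content lies in the $V$-component.

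A preliminary computation I would carry out first is the sub-adjacent commutator of the semi-direct bracket. Evaluating Eq.~\eqref{eq:ncc} for $\{\cdot,\cdots,\cdot\}_{\mathcal{A}\oplus V}$ and collecting, for fixed $k$, all terms carrying the input $u_k$, one finds that the $\mathcal{A}$-component reproduces $[x_1,\cdots,x_n]^C$ while the $V$-component is governed exactly by the operator $\mu(x_1,\cdots,x_{n-1})=l(x_1,\cdots,x_{n-1})+\sum_{i=1}^{n-1}(-1)^{i}(-1)^{|x_i||X|^{n-1}_{i+1}}r(x_1,\cdots,\widehat{x_i},\cdots,x_{n-1},x_i)$ introduced in Definition \ref{defrep}. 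In other words, the sub-adjacent $n$-Hom-Lie superalgebra $(\mathcal{A}\oplus V)^c$ is precisely the semi-direct product of $\mathcal{A}^c$ with the triple $(V,\mu,\alpha_V)$ built via Proposition \ref{direct-sum-n-hom-lie-superalgebras}. This identification is what makes the bracket $[\cdot,\cdots,\cdot]^C$ appearing inside \eqref{n-Hom-pre-Lie-sup 1}--\eqref{n-Hom-pre-Lie-sup 2} computable, and it already forces $l$ to be a representation of $\mathcal{A}^c$ (this is the case where the only $V$-input sits in the acted-upon slot, reproducing \eqref{repr-n-Hom-Lie1}).

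With these reductions in hand, I would extract the remaining conditions by multilinearity: into each of \eqref{n-Hom-pre-Lie-sup 1} and \eqref{n-Hom-pre-Lie-sup 2} I substitute homogeneous arguments $x_i+u_i$ and $y_j+v_j$, take the $V$-projection, and then specialize by letting exactly one of the homogeneous $V$-inputs be nonzero. The slot occupied by that surviving $V$-input selects one identity: placing it in the last entry reproduces the $l$-representation axiom, while placing it in each of the other entries of \eqref{n-Hom-pre-Lie-sup 1} and of \eqref{n-Hom-pre-Lie-sup 2} yields, after relabeling, the mixed conditions \eqref{cond-repres-l-r-1}, \eqref{cond-repres-l-r-2}, \eqref{cond-repres-l-r-3} and \eqref{cond-repres-l-r-4}; the compatibility \eqref{cond-repres-r-1} emerges from matching the $\alpha$-factors produced by the outer bracket on the $r$-terms. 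Since each step is an equivalence (the specialization is reversible because the identities are multilinear and the $V$-inputs are independent), the ``if and only if'' follows.

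The main obstacle is the sign bookkeeping. Each time a homogeneous $V$-input is transported through the other homogeneous entries by super-skew-symmetry, or pulled out of an inner bracket into an $l$- or $r$-operator, it generates Koszul signs of the form $(-1)^{|x_i||X|^{\cdots}_{\cdots}}$ that must be reconciled, slot by slot, with the precise exponents written in \eqref{cond-repres-r-1}--\eqref{cond-repres-l-r-4}. I would organize this by fixing the position $k$ of the surviving $V$-input once and for all, pushing it to a canonical slot, and comparing the accumulated sign with the one prescribed by the target condition; the even-ness of $\alpha,\alpha_V,l,r$ prevents the $\alpha$-twists from contributing extra signs. Modulo this careful but routine accounting, the equivalence is established.
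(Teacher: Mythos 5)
Your strategy is sound and, measured against the paper, strictly more complete: the paper's own proof of Proposition \ref{carpre} consists solely of the computation showing that the bracket \eqref{eq:sum} is super-skew-symmetric on its first $(n-1)$ arguments, and it never verifies \eqref{n-Hom-pre-Lie-sup 1}--\eqref{n-Hom-pre-Lie-sup 2} on $\mathcal{A}\oplus V$, nor the converse direction. Your preliminary lemma --- that the sub-adjacent bracket of the semidirect product has $\mathcal{A}$-component $[\cdot,\cdots,\cdot]^C$ and $V$-component governed by $\mu$, i.e.\ that $(\mathcal{A}\oplus V)^c$ is the semidirect product of $\mathcal{A}^c$ with $(V,\widetilde{\rho},\alpha_V)$ in the sense of Proposition \ref{direct-sum-n-hom-lie-superalgebras} --- is exactly the computation the paper performs separately and later, in Proposition \ref{teald} (Eq.\ \eqref{brac}); folding it in up front is the right move, since without it the occurrences of $[\cdot,\cdots,\cdot]^C$ inside \eqref{n-Hom-pre-Lie-sup 1}--\eqref{n-Hom-pre-Lie-sup 2} are not computable. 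Your slot-by-slot specialization also distributes correctly: a single $V$-input in the last slot of \eqref{n-Hom-pre-Lie-sup 1} resp.\ \eqref{n-Hom-pre-Lie-sup 2} yields the two identities making $l$ a representation of $\mathcal{A}^c$, the remaining slots of \eqref{n-Hom-pre-Lie-sup 1} yield \eqref{cond-repres-l-r-1} and \eqref{cond-repres-l-r-3}, those of \eqref{n-Hom-pre-Lie-sup 2} yield \eqref{cond-repres-l-r-2} and \eqref{cond-repres-l-r-4}, and multilinearity does make each specialization reversible.

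The one step that does not go through as described is your claim that \eqref{cond-repres-r-1} ``emerges from matching the $\alpha$-factors.'' The identities \eqref{n-Hom-pre-Lie-sup 1}--\eqref{n-Hom-pre-Lie-sup 2} apply $\alpha+\alpha_V$ only to arguments; every identity your specialization produces contains the composites $r(\alpha(x_1),\cdots,\alpha(x_{n-1}))\alpha_V$ or $\alpha_V$ buried inside $\mu$-terms as indivisible blocks, and none of them equates $\alpha_V r(x_1,\cdots,x_{n-1})$ with $r(\alpha(x_1),\cdots,\alpha(x_{n-1}))\alpha_V$. So \eqref{cond-repres-r-1} (and likewise the compatibility \eqref{compatible-rho-alpha} required of $l$) is not a consequence of the two structure identities: the paper's definition of an $n$-Hom-pre-Lie superalgebra imposes no multiplicativity of the structure map, so nothing in the semidirect-product hypothesis forces these conditions. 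To make the ``only if'' direction literally true you must either add multiplicativity of $\alpha+\alpha_V$ with respect to \eqref{eq:sum} as a hypothesis (projecting that onto $V$ does give \eqref{cond-repres-r-1} and \eqref{compatible-rho-alpha}) or weaken the conclusion to the identities \eqref{cond-repres-l-r-1}--\eqref{cond-repres-l-r-4} together with $l$'s representation identities. In fairness, this defect is inherited from the statement itself and is invisible in the paper, whose proof stops after the skew-symmetry check; but in your write-up it is this structural point, not the Koszul-sign bookkeeping you flag as the main obstacle, that needs an explicit repair.
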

\begin{proof}
Let $x_i\in\mathcal{H}(\mathcal{A}),\;u_i\in\mathcal{H}(V),\;1\leq i\leq n$, then, for all $1\leq j\leq n-2$, we have

{\small\begin{align*}
& \{x_1+u_1,\cdots,x_j+u_j,x_{j+1}+u_{j+1},\cdots,x_n+u_n\}_{\mathcal{A}\oplus V}\;\;\\&=\{x_1,\cdots,x_j,x_{j+1},\cdots,x_n\}+l(x_1,\cdots,x_j,x_{j+1},\cdots,x_{n-1})(u_n)+\displaystyle\sum_{i=1}^{j-1}(-1)^{i+1}(-1)^{|x_i||X|^{n-1}_{i+1}}r(x_1,\cdots,\hat{x_i},\cdots,x_j,x_{j+1},\cdots,x_n)(u_i)\\& +\displaystyle\sum_{i=j+2}^{n-1}(-1)^{i+1}(-1)^{|x_i||X|^{n-1}_{i+1}}r(x_1,\cdots,x_j,x_{j+1},\hat{x_i},\cdots,\cdots,x_n)(u_i)+(-1)^{j+1}(-1)^{|x_j||X|^{n-1}_{j+1}}r(x_1,\cdots,x_{j-1},x_{j+1},\cdots,x_n)(u_j)\\&+(-1)^{j}(-1)^{|x_{j+1}||X|^{n-1}_{j+2}}r(x_1,\cdots,x_j,x_{j+2},\cdots,x_n)(u_{j+1})\\&=-(-1)^{|x_j||x_{j+1}|}\Big(\{x_1,\cdots,x_{j+1},x_j,\cdots,x_n\}+l(x_1,\cdots,x_{j+1},x_j,\cdots,x_{n-1})(u_n)\\&+\displaystyle\sum_{i=1}^{j-1}(-1)^{i+1}(-1)^{|x_i||X|^{n-1}_{i+1}}r(x_1,\cdots,\hat{x_i},\cdots,x_{j+1},x_j,\cdots,x_n)(u_i)+\displaystyle\sum_{i=j+2}^{n-1}(-1)^{i+1}(-1)^{|x_i||X|^{n-1}_{i+1}}r(x_1,\cdots,x_{j+1},x_j,\hat{x_i},\cdots,\cdots,x_n)(u_i)\\&+(-1)^j(-1)^{|x_j||X|^{n-1}_{j+2}}r(x_1,\cdots,x_{j-1},x_{j+1},\cdots,x_n)(u_j)+(-1)^{j+1}(-1)^{|x_{j+1}|(|x_j|+|X|^{n-1}_{j+1})}r(x_1,\cdots,x_j,x_{j+2},\cdots,x_n)(u_{j+1}) \Big)\\&=(-1)^{(|x_j+uj|)(|x_{j+1}+u_{j+1}|)}\{x_1+u_1,\cdots,x_{j+1}+u_{j+1},x_j+u_j,\cdots,x_n+u_n\}_{\mathcal{A}\oplus V}, 
\end{align*}}
which implies that $\{\cdot,\cdots,\cdot\}_{\mathcal{A}\oplus V}$ is super-skew-symmetric on the first $(n-1)$ terms. 
\end{proof}
Let $V$ be a $\mathbb{Z}_2$-graded vector space and $(V,l,r,\alpha_V)$ be a representation of the $n$-Hom-pre-Lie superalgebra $(\mathcal{A},\{\cdot,\cdots,\cdot\},\alpha)$ on $V$. Define $\widetilde{\rho}:\wedge^{n-1}\mathcal{A} \rightarrow gl(V)$ by
\begin{equation}
    \widetilde{\rho}(x_1,\cdots,x_{n-1})= l(x_1,\cdots,x_{n-1})+\sum_{i=1}^{n-1}(-1)^{i}(-1)^{|x_i||X|_{i+1}^{n-1}}r(x_1,\cdots,\widehat{x_i},\cdots,x_{n-1},x_i),
\end{equation}
for all $x_1,\cdots,x_{n-1}\in\mathcal{H}(\mathcal{A}).$
\begin{pro}\label{teald}
With the above notation, $(V,\widetilde{\rho},\alpha_V)$ is a representation of the sub-adjacent $n$-Hom-Lie superalgebra $(\mathcal{A}^c, [\cdot,\cdots, \cdot]^C,\alpha)$ on the $\mathbb{Z}_2$-graded  vector space $V$.
\end{pro}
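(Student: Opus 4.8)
The plan is to avoid the brute-force verification of \eqref{compatible-rho-alpha}, \eqref{repr-n-Hom-Lie1} and \eqref{repr-n-Hom-Lie2} for $\widetilde{\rho}$, and instead to recognize $\widetilde{\rho}$ as the ``representation part'' of the sub-adjacent $n$-Hom-Lie superalgebra of the semi-direct product, then invoke the equivalence already established in Proposition~\ref{direct-sum-n-hom-lie-superalgebras}. First, since $(V,l,r,\alpha_V)$ is a representation of $(\mathcal{A},\{\cdot,\cdots,\cdot\},\alpha)$, Proposition~\ref{carpre} produces the semi-direct product $n$-Hom-pre-Lie superalgebra $\mathcal{A}\ltimes_{l,r}^{\alpha_V}V$ on $\mathcal{A}\oplus V$, with bracket $\{\cdot,\cdots,\cdot\}_{\mathcal{A}\oplus V}$ given by \eqref{eq:sum} and twisting map $\alpha+\alpha_V$. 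By Proposition-Definition~\ref{pro:subadj}, its sub-adjacent bracket $[\cdot,\cdots,\cdot]^C_{\mathcal{A}\oplus V}$, defined through \eqref{eq:ncc}, equips $\mathcal{A}\oplus V$ with an $n$-Hom-Lie superalgebra structure.

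The heart of the argument is to compute $[\cdot,\cdots,\cdot]^C_{\mathcal{A}\oplus V}$ explicitly and to identify it with the direct-sum bracket \eqref{crochet-direct-sum-n-hom-lie-superalgebras} attached to the data $\big([\cdot,\cdots,\cdot]^C,\widetilde{\rho},\alpha_V\big)$. The $\mathcal{A}$-component is immediate, since the $\mathcal{A}$-part of $\{\cdot,\cdots,\cdot\}_{\mathcal{A}\oplus V}$ is simply $\{\cdot,\cdots,\cdot\}$, so the $\mathcal{A}$-component of the sub-adjacent bracket is exactly $[\cdot,\cdots,\cdot]^C$. For the $V$-component it suffices, by super-skew-symmetry of both brackets, to match the coefficient of $u_n$. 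Expanding $[x_1+u_1,\cdots,x_n+u_n]^C_{\mathcal{A}\oplus V}=\sum_{i=1}^n(-1)^{n-i}(-1)^{|x_i||X|^n_{i+1}}\{x_1+u_1,\cdots,\widehat{x_i+u_i},\cdots,x_n+u_n,x_i+u_i\}_{\mathcal{A}\oplus V}$ and tracking where $u_n$ occurs: the $i=n$ summand contributes $l(x_1,\cdots,x_{n-1})(u_n)$ via the $l$-term of \eqref{eq:sum}, while each summand with $i\le n-1$ contributes, via the $r$-term of \eqref{eq:sum} acting on the slot occupied by $x_n$, a single term. Its accumulated sign simplifies (using $(-1)^{2n-i}=(-1)^i$ together with $|X|^n_{i+1}+|x_n|\equiv|X|^{n-1}_{i+1}\pmod 2$) to exactly $(-1)^i(-1)^{|x_i||X|^{n-1}_{i+1}}$, so that the summand equals $(-1)^i(-1)^{|x_i||X|^{n-1}_{i+1}}r(x_1,\cdots,\widehat{x_i},\cdots,x_{n-1},x_i)(u_n)$. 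Summing over $i$ reproduces precisely $\widetilde{\rho}(x_1,\cdots,x_{n-1})(u_n)$, which is the $k=n$ coefficient in \eqref{crochet-direct-sum-n-hom-lie-superalgebras}; super-skew-symmetry then fixes the remaining slots.

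Having identified $[\cdot,\cdots,\cdot]^C_{\mathcal{A}\oplus V}$ with the direct-sum bracket built from $[\cdot,\cdots,\cdot]^C$ and $\widetilde{\rho}$, the fact that it defines an $n$-Hom-Lie superalgebra structure (established in the first step) is, by the equivalence of Proposition~\ref{direct-sum-n-hom-lie-superalgebras}, exactly the assertion that $(V,\widetilde{\rho},\alpha_V)$ is a representation of the sub-adjacent $n$-Hom-Lie superalgebra $(\mathcal{A}^c,[\cdot,\cdots,\cdot]^C,\alpha)$, which is the claim. I expect the only genuine obstacle to be the sign bookkeeping in the middle step, namely verifying that the coefficients produced by the sub-adjacent bracket collapse to those in the definition of $\widetilde{\rho}$; once the parity reduction $|X|^n_{i+1}+|x_n|\equiv|X|^{n-1}_{i+1}$ is isolated this is routine. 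A direct alternative would be to check \eqref{compatible-rho-alpha}, \eqref{repr-n-Hom-Lie1} and \eqref{repr-n-Hom-Lie2} for $\widetilde{\rho}$ by substituting its definition and invoking \eqref{cond-repres-r-1}--\eqref{cond-repres-l-r-4} together with the fact that $l$ already represents $\mathcal{A}^c$, but this route multiplies the number of terms and the accompanying sign computations considerably.
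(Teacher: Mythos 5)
Your proposal is correct and follows essentially the same route as the paper: form the semi-direct product $n$-Hom-pre-Lie superalgebra $\mathcal{A}\ltimes_{l,r}^{\alpha_V}V$ via Proposition~\ref{carpre}, compute its sub-adjacent bracket, identify it with the bracket \eqref{crochet-direct-sum-n-hom-lie-superalgebras} built from $[\cdot,\cdots,\cdot]^C$ and $\widetilde{\rho}$, and conclude by the equivalence of Proposition~\ref{direct-sum-n-hom-lie-superalgebras}. The only difference is cosmetic --- you match just the coefficient of $u_n$ (with the correct parity reduction $|X|^n_{i+1}+|x_n|\equiv|X|^{n-1}_{i+1}$) and let super-skew-symmetry fix the other slots, whereas the paper expands all $u_j$-terms at once.
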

\begin{proof}
By Proposition \ref{carpre}, we have the semi-direct product $n$-Hom-pre-Lie superalgebra $\mathcal{A}\ltimes^{\alpha_V}_{l,r} V.$ Consider its sub-adjacent $n$-Hom-Lie superalgebra structure
$[\cdot,\cdots, \cdot]^C$, we have for any $x_i\in\mathcal{H}(\mathcal{A}),\;u_i\in\mathcal{H}(V)$
\begin{align}
  &\quad \;[x_1+u_1,\cdots,x_n+u_n]_{\mathcal{A}\oplus V}^C=\sum_{i=1}^{n}(-1)^{n-i}(-1)^{|x_i||X|^n_{i+1}}\{x_1+u_1,\cdots,\widehat{x_i+u_i},\cdots,x_n+u_n,x_i+u_i\}_{\mathcal{A}\oplus V}\nonumber\\
  &=\sum_{i=1}^{n}(-1)^{n-i}(-1)^{|x_i||X|^n_{i+1}}\{x_1,\cdots,\widehat{x_i},\cdots,x_n,x_i\}+\sum_{i=1}^{n}(-1)^{n-i}(-1)^{|x_i||X|^n_{i+1}}l(x_1,\cdots,\widehat{x_i},\cdots,x_n)(u_i)\nonumber\\
  &+\sum_{i=1}^{n}(-1)^{n-i}\Big(\sum_{1 \leq i<j \leq n}(-1)^{j}(-1)^{|X|^n_{j+1}(|x_i|+|x_j|)+|x_i||X|^{j-1}_{i+1}+|x_i||x_j|}r(x_1,\cdots,\widehat{x_i},\cdots,\widehat{x_j},\cdots,x_n,x_i)(u_j)\nonumber\\
  &+\sum_{1 \leq j<i \leq n}(-1)^{j+1}(-1)^{|X|^n_{j+1}(|x_i|+|x_j|)+|x_j||X|^{i-1}_{j+1}+|x_i||x_j|}r(x_1,\cdots,\widehat{x_j},\cdots,\widehat{x_i},\cdots,x_n,x_i)(u_j)\Big)\nonumber\\
  &=[x_1,\cdots,x_n]^C+\sum_{i=1}^{n}(-1)^{n-i}(-1)^{|x_i||X|^{n}_{i+1}}\Big(l(x_1,\cdots,\widehat{x_i},\cdots,x_n)(u_i)\\&+\sum_{1 \leq i<j \leq n}(-1)^{j}(-1)^{j}(-1)^{|X|^n_{j+1}(|x_i|+|x_j|)+|x_i||X|^{j-1}_{i+1}+|x_i||x_j|}r(x_1,\cdots,\widehat{x_i},\cdots,\widehat{x_j},\cdots,x_n,x_i)(u_j)\nonumber\\
  &+\sum_{1 \leq j<i \leq n}(-1)^{j+1}(-1)^{|X|^n_{j+1}(|x_i|+|x_j|)+|x_j||X|^{i-1}_{j+1}+|x_i||x_j|}r(x_1,\cdots,\widehat{x_j},\cdots,\widehat{x_i},\cdots,x_n,x_i)(u_j)\nonumber\\
  &=[x_1,\cdots,x_n]^C+\sum_{k=1}^{n}(-1)^{n-k}(-1)^{|x_k||X|^{n}_{k+1}}\;\widetilde{\rho}(x_1,\cdots,\widehat{x_k},\cdots,x_n)(u_k)\label{brac}.
\end{align}
By Proposition \ref{direct-sum-n-hom-lie-superalgebras}, $(V,\widetilde{\rho},\alpha_V)$ is a representation of the sub-adjacent $n$-Hom-Lie superalgebra $(A^c, [\cdot,\cdots, \cdot]^C,\alpha)$ on the $\mathbb{Z}_2$-vector space $V$. The proof is finished.
\end{proof}

If $(l,r,\alpha_V)=(L,R,\alpha_V)$ is a representation of an $n$-Hom-pre-Lie superalgebra $(\mathcal{A},\{\cdot,\cdots,\cdot\},\alpha)$, then $\widetilde{\rho}=ad$ is the adjoint representation of the sub-adjacent $n$-Hom-Lie superalgebra $(\mathcal{A}^c, [\cdot,\cdots, \cdot]^C,\alpha)$ on itself.
\begin{cor} Let
$(V,l,r,\alpha_V)$ be a representation of an $n$-Hom-pre-Lie superalgebra $(\mathcal{A},\{\cdot,\cdots,\cdot\},\alpha)$ on $V$. Then the semi-product $n$-Hom-pre-Lie superalgebras $\mathcal{A}\ltimes^{\alpha_V}_{l,r}V$ and $\mathcal{A}\ltimes^{\alpha_V}_{\widetilde{\rho}}V$ given by the representations $(V,l,r,\alpha_V)$ and $(V,\widetilde{\rho},0,\alpha_V)$ respectively have the same sub-adjacent $n$-Hom-Lie superalgebra $\mathcal{A}^c\ltimes^{\alpha_V}_{\widetilde{\rho}}V$ given by \eqref{brac}.
\end{cor}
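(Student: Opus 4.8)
The plan is to compare the two semi-direct products by passing to their commutator brackets $[\cdot,\cdots,\cdot]^C_{\mathcal{A}\oplus V}$, defined via Eq.~\eqref{eq:ncc}, and to exploit the fact that the right multiplication $r$ --- the only datum distinguishing the two pre-Lie structures --- is washed out under antisymmetrization. The bulk of the computation is already available: in the proof of Proposition~\ref{teald} the sub-adjacent bracket of $\mathcal{A}\ltimes^{\alpha_V}_{l,r}V$ was computed from \eqref{eq:sum} and \eqref{eq:ncc} and shown to equal \eqref{brac}. Comparing \eqref{brac} with Eq.~\eqref{crochet-direct-sum-n-hom-lie-superalgebras}, its right-hand side is precisely the bracket of the semi-direct product $n$-Hom-Lie superalgebra $\mathcal{A}^c\ltimes^{\alpha_V}_{\widetilde{\rho}}V$ of Proposition~\ref{direct-sum-n-hom-lie-superalgebras}. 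This already identifies the sub-adjacent $n$-Hom-Lie superalgebra of $\mathcal{A}\ltimes^{\alpha_V}_{l,r}V$ with $\mathcal{A}^c\ltimes^{\alpha_V}_{\widetilde{\rho}}V$.

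It then remains to handle the second semi-direct product $\mathcal{A}\ltimes^{\alpha_V}_{\widetilde{\rho}}V$, attached to the representation $(V,\widetilde{\rho},0,\alpha_V)$, whose left multiplication is $\widetilde{\rho}$ and whose right multiplication vanishes (this is a genuine representation of the $n$-Hom-pre-Lie superalgebra by Proposition~\ref{teald} together with the remark preceding the theorem). The key observation is that the $\widetilde{\rho}$-construction is idempotent on such representations: since the right multiplication of $(V,\widetilde{\rho},0,\alpha_V)$ is zero, all correction terms $\sum_i(-1)^i(-1)^{|x_i||X|^{n-1}_{i+1}}r(\cdots)$ in the definition of $\widetilde{\rho}$ vanish, so applying the construction returns $\widetilde{\rho}$ unchanged. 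Hence Proposition~\ref{teald}, now applied to $(V,\widetilde{\rho},0,\alpha_V)$, shows that the sub-adjacent bracket of $\mathcal{A}\ltimes^{\alpha_V}_{\widetilde{\rho}}V$ is again \eqref{brac}, with the same map $\widetilde{\rho}$.

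Alternatively, and as a transparent direct check, I would substitute $l=\widetilde{\rho}$ and $r=0$ into \eqref{eq:sum} and feed the resulting bracket into \eqref{eq:ncc}: with $r$ absent, only the $\{\cdot,\cdots,\cdot\}$-part and a single $\widetilde{\rho}(x_1,\cdots,\widehat{x_k},\cdots,x_n)(u_k)$-term survive, reproducing \eqref{brac} term by term. Combining the two computations, both $\mathcal{A}\ltimes^{\alpha_V}_{l,r}V$ and $\mathcal{A}\ltimes^{\alpha_V}_{\widetilde{\rho}}V$ carry the same sub-adjacent $n$-Hom-Lie superalgebra, namely $\mathcal{A}^c\ltimes^{\alpha_V}_{\widetilde{\rho}}V$ with bracket \eqref{brac}, which is the claim. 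I expect the only delicate point to be the sign bookkeeping in the first computation --- checking that the double sums over pairs $i<j$ and $j<i$ recombine into the single $\widetilde{\rho}$-sum --- but this has already been verified inside the proof of Proposition~\ref{teald}, so no genuinely new obstacle arises.
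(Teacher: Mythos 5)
Your proposal is correct and follows essentially the route the paper intends: the corollary is stated without a printed proof precisely because it is immediate from the computation \eqref{brac} in the proof of Proposition \ref{teald}, which identifies the sub-adjacent bracket of $\mathcal{A}\ltimes^{\alpha_V}_{l,r}V$ with the semi-direct product bracket of Proposition \ref{direct-sum-n-hom-lie-superalgebras} for $(V,\widetilde{\rho},\alpha_V)$. Your additional observation --- that for $(V,\widetilde{\rho},0,\alpha_V)$ the correction terms in the $\widetilde{\rho}$-construction vanish since $r=0$, so the same computation returns the same bracket --- is exactly the (implicit) remaining step, correctly carried out.
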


Let $(V,l,r,\alpha_V)$ be a representation of an $n$-Hom-pre-Lie superalgebra $(\mathcal{A},\{\cdot,\cdots,\cdot\},\alpha)$. In the sequel, we always assume that $\alpha_V$ is invertible to study the dual representation. For all $x_1,\cdots,x_{n-1}\in\mathcal{H}(\mathcal{A}),\;u\in\mathcal{H}(V),\;\xi\in V^*$, define $\widetilde{\rho}^*,r^*:\otimes^{n-1}\mathcal{A}\to gl(V^*)$  by
$$<\widetilde{\rho}^*(x_1,\cdots,x_{n-1})(\xi),u>=-<\xi,\widetilde{\rho}(x_1,\cdots,x_{n-1})(u)>,$$ and 
$$<r^*(x_1,\cdots,x_{n-1})(\xi),u>=-<\xi,r(x_1,\cdots,x_{n-1})(u)>.$$
Then, define $\widetilde{\rho}^\star,r^\star:\otimes^{n-1}\mathcal{A}\to gl(V^*)$ by 
\begin{equation}\label{star-right-repr1}
   \widetilde{\rho}^\star(x_1,\cdots,x_{n-1})(\xi):=\widetilde{\rho}^*(\alpha(x_1),\cdots,\alpha(x_{n-1}))((\alpha_V^{-2})^*(\xi)), 
\end{equation}
\begin{equation}\label{star-right-repr2}
   r^\star(x_1,\cdots,x_{n-1})(\xi):=r^*(\alpha(x_1),\cdots,\alpha(x_{n-1}))((\alpha_V^{-2})^*(\xi)). 
\end{equation}
\begin{thm}\label{adjoint-rep-n-hom-pre-lie}
Let $(V,l,r,\alpha_V)$ be a representation of an $n$-Hom-pre-Lie superalgebra $(\mathcal{A},\{\cdot,\cdots,\cdot\},\alpha)$ on $V$ where $\alpha_V$ is invertible. Then
$(V^{*},\widetilde{\rho}^{\star},-r^\star,(\alpha_V^{-1})^*)$ is a representation of the $n$-Hom-pre-Lie superalgebra $(\mathcal{A},\{\cdot,\cdots,\cdot\},\alpha)$ on $V^{*}$, which is called the dual representation of the representation $(V,l,r,\alpha_V)$.
\end{thm}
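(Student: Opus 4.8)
The plan is to verify that the quadruple $(V^{*},\widetilde{\rho}^{\star},-r^{\star},(\alpha_V^{-1})^{*})$ satisfies every clause of Definition \ref{defrep}, and I begin with two structural observations that organize the whole computation. First, comparing the definition of $\widetilde{\rho}$ with the auxiliary operator $\mu$ occurring in Definition \ref{defrep} shows that they coincide: $\mu(x_1,\cdots,x_{n-1})=\widetilde{\rho}(x_1,\cdots,x_{n-1})$. Second, I introduce the $\star$-dual $l^{\star}$ of the left multiplication $l$ by the same recipe \eqref{star-right-repr1}--\eqref{star-right-repr2}, namely $l^{\star}(x_1,\cdots,x_{n-1})(\xi)=l^{*}(\alpha(x_1),\cdots,\alpha(x_{n-1}))((\alpha_V^{-2})^{*}(\xi))$ with $\langle l^{*}(x_1,\cdots,x_{n-1})(\xi),u\rangle=-\langle\xi,l(x_1,\cdots,x_{n-1})(u)\rangle$. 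The $\star$-operation is linear in its argument and commutes with any sign-weighted reindexing of the entries, since the reordering signs depend only on parities and $\alpha$ is even; applying this to the defining formula of $\widetilde{\rho}$ gives $\widetilde{\rho}^{\star}=l^{\star}+\sum_{i=1}^{n-1}(-1)^{i}(-1)^{|x_i||X|^{n-1}_{i+1}}r^{\star}(x_1,\cdots,\widehat{x_i},\cdots,x_{n-1},x_i)$. Consequently the auxiliary operator built from the dual pair $(\widetilde{\rho}^{\star},-r^{\star})$ is exactly $l^{\star}$. Thus passing to the dual interchanges the roles of the left multiplication and of $\mu$: the triple $(l,r,\widetilde{\rho})$ is replaced by $(\widetilde{\rho}^{\star},-r^{\star},l^{\star})$.

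Next I dispose of the requirements that involve no mixing. By Proposition \ref{teald}, $(V,\widetilde{\rho},\alpha_V)$ is a representation of the sub-adjacent $n$-Hom-Lie superalgebra $\mathcal{A}^{c}$; since $\alpha_V$ is invertible, the dual representation construction for $n$-Hom-Lie superalgebras (Example \ref{dual-representation}), applied to $(V,\widetilde{\rho},\alpha_V)$, shows that $(V^{*},\widetilde{\rho}^{\star},(\alpha_V^{-1})^{*})$ is again a representation of $\mathcal{A}^{c}$ on $V^{*}$. This is precisely the clause of Definition \ref{defrep} asking the left component of a pre-Lie representation to represent $\mathcal{A}^{c}$. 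The super-skew-symmetry of $-r^{\star}$ on its first $(n-2)$ entries is inherited from that of $r$, because dualization and the even twists by $\alpha$ and $(\alpha_V^{-2})^{*}$ leave the reordering signs untouched; and condition \eqref{cond-repres-r-1} for $-r^{\star}$ follows by dualizing the compatibility $\alpha_V r(x_1,\cdots,x_{n-1})=r(\alpha(x_1),\cdots,\alpha(x_{n-1}))\alpha_V$ and inserting the $\star$-twist.

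It remains to establish the four mixed identities \eqref{cond-repres-l-r-1}--\eqref{cond-repres-l-r-4} for the dual pair $(\widetilde{\rho}^{\star},-r^{\star})$ with auxiliary operator $l^{\star}$. The method is uniform: write out the desired dual identity, evaluate both sides on an arbitrary $\xi\in V^{*}$, and pair the outcome against an arbitrary $u\in\mathcal{H}(V)$; using the defining pairings of $\widetilde{\rho}^{\star}$, $r^{\star}$ and $l^{\star}$ together with the relation $\widetilde{\rho}^{\star}=l^{\star}+\sum_{i}(-1)^{i}(-1)^{|x_i||X|^{n-1}_{i+1}}r^{\star}(\cdots,x_i)$, this turns each dual identity into an identity among $l$, $r$ and $\widetilde{\rho}$ evaluated on $\alpha$-shifted arguments and on an $\alpha_V^{-2}$-shifted vector. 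Each such transferred identity is then a consequence of the original conditions \eqref{cond-repres-l-r-1}--\eqref{cond-repres-l-r-4}; because $\widetilde{\rho}^{\star}$ expands into an $l^{\star}$-part and an $r^{\star}$-part, a given transferred identity will in general require combining two or more of the original conditions, along with the super-skew-symmetry of $r$ on its first $(n-2)$ slots.

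The main obstacle is twofold and essentially computational. First, the sign bookkeeping: one must verify that the Koszul signs generated by moving $\xi$ across the entries, by the super-skew reorderings hidden inside $\widetilde{\rho}^{\star}$ and $l^{\star}$, and by the mixed parities $|X|^{n-1}|Y|^{n-1}$ appearing in \eqref{cond-repres-l-r-1}--\eqref{cond-repres-l-r-4}, recombine with the minus sign carried by $-r^{\star}$ and with the $\alpha$- and $\alpha_V^{-2}$-twists of the $\star$-operation. Second, one must confirm that the commutator bracket $[\cdot,\cdots,\cdot]^{C}$ and the product $\{\cdot,\cdots,\cdot\}$ occurring inside the conditions are faithfully reproduced after the $\alpha^{-1}$-shift, which rests on the compatibility of $l$ and $r$ with $\alpha_V$ and on the multiplicativity of $\alpha$. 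Once these two points are settled simultaneously for all four identities, every clause of Definition \ref{defrep} holds and $(V^{*},\widetilde{\rho}^{\star},-r^{\star},(\alpha_V^{-1})^{*})$ is a representation of $(\mathcal{A},\{\cdot,\cdots,\cdot\},\alpha)$ on $V^{*}$.
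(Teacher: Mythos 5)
Your proof follows essentially the same route as the paper's: both reduce the $n$-Hom-Lie component to Proposition \ref{teald} combined with the dual construction of Example \ref{dual-representation}, and both leave the four mixed identities \eqref{cond-repres-l-r-1}--\eqref{cond-repres-l-r-4} to a routine (if sign-heavy) direct dualization. Your additional observations that $\mu=\widetilde{\rho}$ and that the dual pair $(\widetilde{\rho}^{\star},-r^{\star})$ has auxiliary operator exactly $l^{\star}$ (so that dualization swaps the roles of the left multiplication and of $\mu$) are correct and in fact organize the verification that the paper dismisses as ``straightforward'' more transparently than the paper itself does.
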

\begin{proof}
By Proposition \ref{teald}, $(V,\widetilde{\rho},\alpha_V)$ is a representation of the sub-adjacent $n$-Hom-Lie superalgebra $(\mathcal{A}^c, [\cdot,\cdots, \cdot]^C,\alpha)$ on $V$. By Example \ref{dual-representation}, $(V^{*},\widetilde{\rho}^{\star},(\alpha_V^{-1})^*)$ is a representation of the sub-adjacent $n$-Lie algebra $(\mathcal{A}^c, [\cdot,\cdots, \cdot]^C,\alpha)$ on the dual vector space $V^{*}$. It is straightforward to deduce that other conditions of Definition \ref{defrep} also holds. We leave details to readers.
\end{proof}

The tensor product of two representations of an $n$-Hom-pre-Lie superalgebras is still a representation.
\begin{thm}
Let $(\mathcal{A},\{\cdot,\cdots,\cdot\},\alpha)$ is an $n$-Hom-pre-Lie superalgebra, $(V_1,l_{V_1},r_{V_1},\alpha_{V_1})$ and $(V_2,l_{V_2},r_{V_2},\alpha_{V_2})$ its representations. Then $(V_1\otimes V_2,l_{V_1}\otimes\alpha_{V_2}+\alpha_{V_1}\otimes(l_{V_2}-r_{V_2}),r_{V_1}\otimes\alpha_{V_2},\alpha_{V_1}\otimes\alpha_{V_2})$ is a representation of $(\mathcal{A},\{\cdot,\cdots,\cdot\},\alpha)$.
\end{thm}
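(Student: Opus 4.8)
The plan is to verify, one condition at a time, that the triple $(L,R,A_V)$ meets every requirement of Definition \ref{defrep}, where I abbreviate $L=l_{V_1}\otimes\alpha_{V_2}+\alpha_{V_1}\otimes(l_{V_2}-r_{V_2})$, $R=r_{V_1}\otimes\alpha_{V_2}$ and $A_V=\alpha_{V_1}\otimes\alpha_{V_2}$. The single computational device used throughout is the Koszul sign rule for the action of a tensor of homogeneous operators on a decomposable homogeneous tensor $u\otimes v$ with $u\in\mathcal{H}(V_1)$ and $v\in\mathcal{H}(V_2)$, namely $(f\otimes g)(u\otimes v)=(-1)^{|g||u|}f(u)\otimes g(v)$, where $|g|$ is the operator degree (so $|l_{V_2}(x_1,\cdots,x_{n-1})|=|X|^{n-1}$, while the $\alpha_{V_i}$ contribute no sign). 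Each identity is established by evaluating both sides on $u\otimes v$, applying this rule, and then separating the terms carrying the factor $\otimes\alpha_{V_2}$ (the ``first-leg'' contribution) from those carrying $\alpha_{V_1}\otimes$ (the ``second-leg'' contribution).

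I would first clear the compatibility condition \eqref{cond-repres-r-1}: because $\alpha_{V_1}r_{V_1}(x_1,\cdots,x_{n-1})=r_{V_1}(\alpha(x_1),\cdots,\alpha(x_{n-1}))\alpha_{V_1}$ holds and $\alpha_{V_2}$ commutes with itself, the relation $A_V\,R(x_1,\cdots,x_{n-1})=R(\alpha(x_1),\cdots,\alpha(x_{n-1}))\,A_V$ splits across the two tensor legs and follows at once. The substantive part of the first requirement is that $L$ be a representation of the sub-adjacent $n$-Hom-Lie superalgebra $\mathcal{A}^{c}$, i.e.\ that $(V_1\otimes V_2,L,A_V)$ satisfy the three axioms of Definition \ref{defi:rep-n-Hom-Lie-sup}. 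Here I would record the useful fact that the combination $\mu$ of Definition \ref{defrep} coincides with the induced map $\widetilde{\rho}$ of Proposition \ref{teald}, and that grouping the $\otimes\alpha_{V_2}$ terms gives $\mu_{V_1\otimes V_2}=\widetilde{\rho}_1\otimes\alpha_{V_2}+\alpha_{V_1}\otimes(l_{V_2}-r_{V_2})$. For $n=2$ the second leg collapses to $\widetilde{\rho}_2$, so $L$ is literally the tensor $\widetilde{\rho}_1\otimes\alpha_{V_2}+\alpha_{V_1}\otimes\widetilde{\rho}_2$ of two Hom-Lie representations and the three axioms are inherited. For $n\geq3$, however, $l_{V_2}-r_{V_2}$ is \emph{not} equal to $\widetilde{\rho}_2$ (the latter carries an extra alternating sum of right-multiplications), so this shortcut fails; I then expect to verify the three Hom-Lie axioms for $L$ directly, feeding in the mixed conditions \eqref{cond-repres-l-r-1}--\eqref{cond-repres-l-r-4} for both $V_1$ and $V_2$. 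This is the first point at which the two factors genuinely interact, and I regard it as one of the two main obstacles.

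The remaining conditions \eqref{cond-repres-l-r-1}--\eqref{cond-repres-l-r-4} are then treated in turn. In each case I would expand $L$, $R$ and $\mu$ through their tensor formulas, push the homogeneous inputs through by the Koszul rule, and match the first-leg part of the left-hand side against the corresponding identity for $(V_1,l_{V_1},r_{V_1},\alpha_{V_1})$ and the second-leg part against that for $(V_2,l_{V_2},r_{V_2},\alpha_{V_2})$, using \eqref{cond-repres-r-1} for each factor to slide the spectator copies of $\alpha_{V_i}$ into position. After this organisation every condition reduces to the simultaneous validity of the same-numbered condition on $V_1$ and on $V_2$.

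The second, and genuinely dominant, obstacle is the sign bookkeeping. The degree exponents built from the Notations ($|X|^{n-1}$, $|Y|^{i-1}$, $|X|^{j-1}_{i+1}$, and the like) must be reconciled with the Koszul signs $(-1)^{|g||u|}$ produced whenever an operator is moved across a tensor factor, and with the further alternating signs hidden inside $\mu=\widetilde{\rho}$. Conditions \eqref{cond-repres-l-r-3} and \eqref{cond-repres-l-r-4}, which couple $l$, $r$ and $\mu$ at once, are the most delicate: there one must first rewrite every $\mu$-term via the splitting $\mu_{V_1\otimes V_2}=\widetilde{\rho}_1\otimes\alpha_{V_2}+\alpha_{V_1}\otimes(l_{V_2}-r_{V_2})$ before the two-leg matching can be carried out, and then check that the permutation signs coming from the hat-deletions agree on both sides. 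Once the signs are marshalled consistently, the decomposition into first- and second-leg contributions closes the proof.
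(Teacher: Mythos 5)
Your top-level strategy --- expand $L$, $R$ and $\mu$ through their tensor formulas, evaluate on homogeneous decomposables, and split every identity into the first-leg ($\cdot\otimes\alpha_{V_2}$) and second-leg ($\alpha_{V_1}\otimes\cdot$) contributions --- is exactly the paper's proof, and your identity $\mu_{V_1\otimes V_2}=\widetilde{\rho}_1\otimes\alpha_{V_2}+\alpha_{V_1}\otimes(l_{V_2}-r_{V_2})$ is what the paper uses implicitly. But your stated closing mechanism is wrong: it is \emph{not} true that "every condition reduces to the simultaneous validity of the same-numbered condition on $V_1$ and on $V_2$". In the actual computation (and in the paper's) the second leg never satisfies its own copy of \eqref{cond-repres-l-r-1}--\eqref{cond-repres-l-r-4}; the cross terms carrying $l_{V_2}-r_{V_2}$ cancel \emph{pairwise against each other} once the spectator $\alpha$'s are slid into position using \eqref{cond-repres-r-1} and the $l$-compatibility of type \eqref{compatible-rho-alpha}. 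For instance, writing $X=(x_1,\cdots,x_{n-1})$, $Y=(y_1,\cdots,y_{n-1})$, in the tensor version of \eqref{cond-repres-l-r-1} the cross term $\bigl(\alpha_{V_1}r_{V_1}(Y)\bigr)\otimes\bigl((l_{V_2}-r_{V_2})(\alpha(X))\alpha_{V_2}\bigr)$ produced by $L(\alpha(X))R(Y)$ is annihilated by the corresponding cross term of $(-1)^{|X|^{n-1}|Y|^{n-1}}R(\alpha(Y))\mu_{V_1\otimes V_2}(X)$ via $\alpha_{V_1}r_{V_1}(Y)=r_{V_1}(\alpha(Y))\alpha_{V_1}$ and $(l_{V_2}-r_{V_2})(\alpha(X))\alpha_{V_2}=\alpha_{V_2}(l_{V_2}-r_{V_2})(X)$; only the first leg ever consumes the same-numbered $V_1$ identity. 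If you literally try to match the second-leg part against $V_2$'s identity, the bookkeeping cannot close, because each second-leg operator arrives glued to a non-matching first-leg factor such as $\alpha_{V_1}r_{V_1}(Y)$. (Incidentally, this pairwise cancellation is where your Koszul convention earns its keep: under it both cross terms carry the same factor $(-1)^{|X|^{n-1}|Y|^{n-1}}$, whereas the paper composes tensor operators with no graded signs and its displayed cancellations are sign-inconsistent in the super case --- on that point your version is the more defensible one.)

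The second problem is that the one place where the two factors genuinely interact --- your own "first main obstacle", namely verifying that $L=l_{V_1}\otimes\alpha_{V_2}+\alpha_{V_1}\otimes(l_{V_2}-r_{V_2})$ is a representation of the sub-adjacent $n$-Hom-Lie superalgebra $\mathcal{A}^c$ in the sense of Definition \ref{defi:rep-n-Hom-Lie-sup}, as the first clause of Definition \ref{defrep} demands --- is left as a promissory note ("I then expect to verify the three Hom-Lie axioms directly"). You correctly observe that for $n\geq 3$ the $n=2$ shortcut fails because $l_{V_2}-r_{V_2}\neq\widetilde{\rho}_2$, but you give no argument at all for this step, and it is precisely the step that does not follow from leg-splitting plus intertwining: it requires the mixed conditions on $V_2$, and even the super-skew-symmetry of $L$ in all $n-1$ arguments is not evident, since $r_{V_2}$ is super-skew-symmetric only in its first $n-2$ slots. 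To be fair, the paper's proof is silent on this axiom as well --- it verifies \eqref{cond-repres-r-1}, \eqref{cond-repres-l-r-1} and \eqref{cond-repres-l-r-2} in full and declares \eqref{cond-repres-l-r-3}--\eqref{cond-repres-l-r-4} analogous --- so your proposal at least names the hole where the paper does not; but naming it is not filling it, and as written your proof is incomplete exactly there.
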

\begin{proof}
By using the fact that $(V_1,l_{V_1},r_{V_1},\alpha_{V_1})$ and $(V_2,l_{V_2},r_{V_2},\alpha_{V_2})$ are representations of $(\mathcal{A},\{\cdot,\cdots,\cdot\},\alpha)$, then for all $x_1,\cdots,x_{n},y_1,\cdots,y_{n-1}\in \mathcal{H}(\mathcal{A})$, we have:
 \begin{align}
 \bullet &(\alpha_{V_1}\otimes \alpha_{V_2})\circ (r_{V_1}\otimes\alpha_{V_2})(x_1,\cdots,x_{n-1})-(r_{V_1}\otimes \alpha_{V_2})\circ(\alpha_{V_1}\otimes \alpha_{V_2})(\alpha(x_1),\cdots,\alpha(x_{n-1}))\nonumber\\
 &=(\alpha_{V_1}\circ r_{V_1}(x_1,\cdots,x_{n}))\otimes(\alpha_{V_2}\otimes \alpha_{V_2})-(r_{V_1}(\alpha(x_1),\cdots,\alpha(x_{n-1}))\circ\alpha_{V_1})\otimes(\alpha_{V_2}\circ\alpha_{V_2})\nonumber\\&=0\nonumber\\
 \bullet &(l_{V_1}\otimes\alpha_{V_2}+\alpha_{V_1}\otimes(l_{V_2}-r_{V_2}))(\alpha(x_1),\cdots,\alpha(x_{n-1}))\circ (r_{V_1}\otimes \alpha_{V_2})(y_1,\cdots,y_{n-1})\nonumber\\ &-(-1)^{|X|^{n-1}|Y|^{n-1}}(r_{V_1}\otimes \alpha_{V_2})(\alpha(y_1),\cdots,\alpha(y_{n-1}))\circ\mu_{V_1\otimes V_2}(x_1,\cdots,x_{n-1})\nonumber\\
 &+\sum_{i=1}^{n-2}(-1)^{|X|^{n-1}|Y|^{i-1}}(r_{V_1}\otimes \alpha_{V_2})(\alpha(y_1),\cdots,\alpha(y_{i-1}),[x_1,\cdots,x_{n-1},y_i]^C,\alpha(y_{i+1}),\cdots,\alpha(y_{n-1}))\circ(\alpha_{V_1}\otimes\alpha_{V_2}) \nonumber\\&+(-1)^{|X|^{n-1}|Y|^{n-2}}(r_{V_1}\otimes \alpha_{V_2})(\alpha(y_1),\cdots,\alpha(y_{n-2}),\{x_1,\cdots,x_{n-1},y_{n-1}\})\circ(\alpha_{V_1}\otimes\alpha_{V_2})\nonumber\\&=(l_{V_1}(\alpha(x_1),\cdots,\alpha(x_{n-1}))\circ r_{V_1}(y_1,\cdots,y_{n-1}))\otimes(\alpha_{V_2}\circ\alpha_{V_2})\nonumber\\
 &+((\alpha_{V_1}\circ r_{V_1})(y_1,\cdots,y_{n-1}))\otimes(l_{V_2}(\alpha(x_1),\cdots,\alpha(x_{n-1}))\circ \alpha_{V_2
 }\nonumber\\
 &-((\alpha_{V_1}\circ r_{V_1})(y_1,\cdots,y_{n-1}))\otimes (r_{V_2}(\alpha(x_1),\cdots,\alpha(x_{n-1}))\circ \alpha_{V_2
 }\nonumber\\
 &-(-1)^{|X|^{n-1}|Y|^{n-1}}(r_{V_1}\otimes \alpha_{V_2})(\alpha(y_1),\cdots,\alpha(y_{n-2})\circ ((l_{V_1}\otimes \alpha_{V_2}\nonumber+\alpha_{V_1}\otimes(l_{V_2}-r_{V_2}))(x_1,\cdots,x_{n-1})\\&+\sum_{i=1}^{n-1}(-1)^{i}(-1)^{|X_i||X|_{i+1}^{n-1}}(r_{V_1}\otimes \alpha_{V_2})(x_1,\cdots,\hat{x_i},\cdots,x_{n-1},x_i))\nonumber\\
 &+\sum_{i=1}^{n-2}(-1)^{|X|^{n-1}|Y|^{i-1}}(r_{V_1}(\alpha(y_1),\cdots,\alpha(y_{i-1}),[x_1,\cdots,x_{n-1},y_i]^C,\alpha(y_{i+1}),\cdots,\alpha(y_{n-1}))\circ \alpha_{V_1})\otimes(\alpha_{V_2}\circ\alpha_{V_2} ) \nonumber\\
 &+(-1)^{|X|^{n-1}|Y|^{n-2}}(r_{V_1}(\alpha(y_1),\cdots,\alpha(y_{n-2}),\{x_1,\cdots,x_{n-1},y_{n-1}\})\circ\alpha_{V_1})\otimes(\alpha_{V_2}\circ\alpha_{V_2})\nonumber\\&=(l_{V_1}(\alpha(x_1),\cdots,\alpha(x_{n-1}))\circ r_{V_1}(y_1,\cdots,y_{n-1}))\otimes(\alpha_{V_2}\circ\alpha_{V_2})\nonumber\\
 &+((\alpha_{V_1}\circ r_{V_1})(y_1,\cdots,y_{n-1}))\otimes(l_{V_2}(\alpha(x_1),\cdots,\alpha(x_{n-1}))\circ \alpha_{V_2
 }\nonumber\\&-((\alpha_{V_1}\circ r_{V_1})(y_1,\cdots,y_{n-1}))\otimes (r_{V_2}(\alpha(x_1),\cdots,\alpha(x_{n-1}))\circ \alpha_{V_2
 }\nonumber\\ &-(-1)^{|X|^{n-1}|Y|^{n-1}}(r_{V_1}(\alpha(y_1),\cdots,\alpha(y_{n-1})\circ (l_{V_1}(x_1,\cdots,x_{n-1}))\otimes( \alpha_{V_2}\circ\alpha_{V_2})\nonumber\\&-(-1)^{|X|^{n-1}|Y|^{n-1}}(r_{V_1}(\alpha(y_1),\cdots,\alpha(y_{n-1})\circ \alpha_{V_1}))\otimes (\alpha_{V_2}\circ l_{V_2}(x_1,\cdots,x_{n-1})\nonumber\\&+(-1)^{|X|^{n-1}|Y|^{n-1}}(r_{V_1}(\alpha(y_1),\cdots,\alpha(y_{n-1})\circ \alpha_{V_1}))\otimes (\alpha_{V_2}\circ r_{V_2}(x_1,\cdots,x_{n-1})\nonumber\\&-(-1)^{|X|^{n-1}|Y|^{n-1}}\sum_{i=1}^{n-1}(-1)^{i}(-1)^{|X_i||X|_{i+1}^{n-1}}(r_{V_1}(\alpha(y_1),\cdots,\alpha(y_{n-1})\circ r_{V_1}(x_1,\cdots,\hat{x_i},\cdots,x_{n-1},x_i))\otimes(\alpha_{V_2}\circ\alpha_{V_2} ) \nonumber\\
 &+\sum_{i=1}^{n-2}(-1)^{|X|^{n-1}|Y|^{i-1}}(r_{V_1}(\alpha(y_1),\cdots,\alpha(y_{i-1}),[x_1,\cdots,x_{n-1},y_i]^C,\alpha(y_{i+1}),\cdots,\alpha(y_{n-1}))\circ \alpha_{V_1})\otimes(\alpha_{V_2}\circ\alpha_{V_2} ) \nonumber\\
 &+(-1)^{|X|^{n-1}|Y|^{n-2}}(r_{V_1}(\alpha(y_1),\cdots,\alpha(y_{n-2}),\{x_1,\cdots,x_{n-1},y_{n-1}\})\circ\alpha_{V_1})\otimes(\alpha_{V_2}\circ\alpha_{V_2})\nonumber\\
 &=0\nonumber\\
 &\bullet (r_{V_1}\otimes \alpha_{V_2})([x_1,\cdots,x_n]^C,\alpha(y_{1}),\cdots,\alpha(y_{n-2}))\circ(\alpha_{V_1}\otimes \alpha_{V_2})\nonumber\\
 &-\sum_{i=1}^{n}(-1)^{n-i}(-1)^{|x_i||X|_{i+1}^n}(l_{V_1}\otimes\alpha_{V_2}+\alpha_{V_1}\otimes(l_{V_2}-r_{V_2}))(\alpha(x_1),\cdots,\widehat{\alpha(x_i)}),\cdots,\alpha(x_n))\circ (r_{V_1}\otimes\alpha_{V_2})(x_i,y_1,\cdots,y_{n-2})\nonumber\\&=(r_{V_1}([x_1,\cdots,x_n]^C,\alpha(y_{1}),\cdots,\alpha(y_{n-2}))\circ\alpha_{V_1})\otimes(\alpha_{V_2}\circ\alpha_{V_2})\nonumber\\&-\sum_{i=1}^{n}(-1)^{n-i}(-1)^{|x_i||X|_{i+1}^n}(l_{V_1}(\alpha(x_1),\cdots,\widehat{\alpha(x_i}),\cdots,\alpha(x_n))\circ (r_{V_1}(x_i,y_1,\cdots,y_{n-2}))\otimes(\alpha_{V_2}\circ \alpha_{V_2})\nonumber\\&-\sum_{i=1}^{n}(-1)^{n-i}(-1)^{|x_i||X|_{i+1}^n}(\alpha_{V_1}\circ r_{V_1}(x_i,y_1,\cdots,y_{n-2}))\otimes(l_{V_2}(\alpha(x_1),\cdots,\widehat{\alpha(x_i}),\cdots,\alpha(x_n))\circ \alpha_{V_2})\nonumber\\&+\sum_{i=1}^{n}(-1)^{n-i}(-1)^{|x_i||X|_{i+1}^n}(\alpha_{V_1}\circ r_{V_1}(x_i,y_1,\cdots,y_{n-2}))\otimes(r_{V_2}(\alpha(x_1),\cdots,\widehat{\alpha(x_i}),\cdots,\alpha(x_n))\circ \alpha_{V_2})\nonumber\\&=0\nonumber.
\end{align}
By the same way, we show that the conditions (\ref{cond-repres-l-r-3}) and (\ref{cond-repres-l-r-4}) hold. The proposition is proved.
\end{proof}
\begin{lem}\label{adjoint-adjoit-rep}
Let $(V,l,r,\alpha_V)$ be a representation of an $n$-Hom-pre-Lie superalgebra $(\mathcal{A},\{\cdot,\cdots,\cdot\},\alpha)$.  Then we have $$(\widetilde{\rho}^\star)^\star=\widetilde{\rho}\;\;\text{and}\;\;(r^\star)^\star=r,$$ where $\widetilde{\rho}^\star$ and $r^\star$ defined by \eqref{star-right-repr1} and \eqref{star-right-repr2} respectively.
\end{lem}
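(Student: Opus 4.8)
The plan is to unfold the two-step definition of the operation $(-)^\star$ twice and then, using the $\alpha$-compatibility of the maps, reduce both identities to the elementary involutivity of the canonical isomorphism $V^{\ast\ast}\cong V$. The one genuinely structural input is the behaviour of the structure maps under dualization: by Theorem \ref{adjoint-rep-n-hom-pre-lie} (cf.\ Example \ref{dual-representation}) the dual representation carries the structure map $\alpha_{V^\ast}=(\alpha_V^{-1})^\ast$, so the \emph{second} application of $(-)^\star$ of \eqref{star-right-repr1}--\eqref{star-right-repr2} is twisted by $\alpha_{V^\ast}$ rather than by $\alpha_V$. Explicitly,
\begin{equation*}
(\alpha_{V^\ast}^{-2})^\ast=\Big(\big((\alpha_V^{-1})^\ast\big)^{-2}\Big)^\ast=\big((\alpha_V^2)^\ast\big)^\ast=(\alpha_V^2)^{\ast\ast},
\end{equation*}
which becomes simply $\alpha_V^2$ after the identification $V^{\ast\ast}\cong V$. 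Getting this second-level twist right (i.e.\ $\alpha_V^{2}$ and not $\alpha_V^{-2}$) is the crux of the bookkeeping.

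First I would treat $\widetilde\rho$, writing $X=(x_1,\cdots,x_{n-1})$ and viewing $u\in V$ as an element of $V^{\ast\ast}$. Pairing $(\widetilde\rho^\star)^\star(X)(u)$ against an arbitrary $\xi\in V^\ast$ and unfolding the outer $\star$ (whose twist is $(\alpha_{V^\ast}^{-2})^\ast=\alpha_V^2$) followed by the inner $\star$ of \eqref{star-right-repr1}, the two sign changes produced by the two ordinary duals cancel and leave
\begin{equation*}
\big\langle(\widetilde\rho^\star)^\star(X)(u),\xi\big\rangle=\big\langle\xi,\ \alpha_V^{-2}\big(\widetilde\rho(\alpha^2(X))(\alpha_V^2(u))\big)\big\rangle .
\end{equation*}
Iterating the compatibility \eqref{compatible-rho-alpha}, namely $\widetilde\rho(\alpha^2(X))\,\alpha_V^2=\alpha_V^2\,\widetilde\rho(X)$, collapses the inner expression to $\widetilde\rho(X)(u)$, so the right-hand side equals $\langle\xi,\widetilde\rho(X)(u)\rangle=\big\langle\widetilde\rho(X)(u),\xi\big\rangle$ under $V^{\ast\ast}\cong V$. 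Since $\xi$ and $u$ are arbitrary, $(\widetilde\rho^\star)^\star=\widetilde\rho$.

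Finally, the computation for $r$ is verbatim the same: the only facts used above were the definition \eqref{star-right-repr1} and the intertwining relation $\widetilde\rho(\alpha(X))\alpha_V=\alpha_V\widetilde\rho(X)$, and $r$ satisfies the identical definition \eqref{star-right-repr2} together with the intertwining relation \eqref{cond-repres-r-1}, $\alpha_V\,r(X)=r(\alpha(X))\alpha_V$. Running the same substitutions therefore yields $(r^\star)^\star=r$. I expect the main obstacle to be purely the super-sign accounting: one must track the sign $(-1)^{|u||\xi|}$ coming from the canonical pairing $V^{\ast\ast}\cong V$ for homogeneous $u,\xi$ and check that it enters symmetrically on both sides (so the net sign from the two ordinary duals is $+1$), which is legitimate precisely because $\alpha_V$, $\widetilde\rho$ and $r$ are all even.
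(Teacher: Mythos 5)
Your proposal is correct and takes essentially the same route as the paper: the paper's proof likewise unfolds the double $\star$ as $\langle(\widetilde{\rho}^\star)^\star(x_1,\cdots,x_{n-1})(u),\xi\rangle=\langle(\widetilde{\rho}^\star)^\ast(\alpha(x_1),\cdots,\alpha(x_{n-1}))(\alpha_V^{2}(u)),\xi\rangle$ --- i.e.\ with the second-level twist equal to $\alpha_V^{2}$, exactly the bookkeeping point you isolate --- then lets the two minus signs from the ordinary duals cancel and concludes via the intertwining relations $\widetilde{\rho}(\alpha(x_1),\cdots,\alpha(x_{n-1}))\alpha_V=\alpha_V\widetilde{\rho}(x_1,\cdots,x_{n-1})$ and \eqref{cond-repres-r-1}. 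The identity $(r^\star)^\star=r$ is handled there, as in your proposal, by the verbatim analogous computation.
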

\begin{proof}
Let $x_i\in\mathcal{H}(\mathcal{A}),\;1\leq i\leq n-1,\;\xi\in V^\ast$ and $u\in V$, then we have
\begin{align*}
<(\widetilde{\rho}^\star)^\star(x_1,\cdots,x_{n-1})(u),\xi>&=<(\widetilde{\rho}^\star)^\ast(\alpha(x_1),\cdots,\alpha(x_{n-1}))(\alpha_V^2(u)),\xi>\\&=- <\alpha_V^2(u),\widetilde{\rho}^\star(\alpha(x_1),\cdots,\alpha(x_{n-1}))(\xi)>\\&=-<\alpha_V^2(u),\widetilde{\rho}^\ast(\alpha^2(x_1),\cdots,\alpha^2(x_{n-1}))((\alpha_V^{-2})^\ast(\xi))>\\&= <\widetilde{\rho}(\alpha^2(x_1),\cdots,\alpha^2(x_{n-1}))(\alpha_V^2(u)),((\alpha_V^{-2})^\ast(\xi))>\\&= <\widetilde{\rho}(x_1,\cdots,x_{n-1})(u),\xi>, 
\end{align*}
Then $(\widetilde{\rho}^\star)^\star=\widetilde{\rho}$. Similarly, we have $(r^\star)^\star=r$.
\end{proof}
\begin{pro}\label{adjoit-adjoint-rep}
Let $(V,l,r,\alpha_V)$ be a representation of an $n$-Hom-pre-Lie superalgebra $(\mathcal{A},\{\cdot,\cdots,\cdot\},\alpha)$, where $\alpha_V$ is invertible. Then the dual representation of $(V^*,\widetilde{\rho}^\star,-r^\star,(\alpha_V^{-1})^*)$ is $(V,\widetilde{\rho},-r,\alpha_V)$.
\end{pro}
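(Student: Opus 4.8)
The plan is to read off the dual representation of $(V^*,\widetilde{\rho}^\star,-r^\star,(\alpha_V^{-1})^*)$ directly from Theorem \ref{adjoint-rep-n-hom-pre-lie}, applied this time with the \emph{dual} representation playing the role of the input $(V,l,r,\alpha_V)$, and then to collapse the resulting double $\star$-operations by means of Lemma \ref{adjoint-adjoit-rep}. Concretely, write $(l',r',\alpha_V'):=(\widetilde{\rho}^\star,-r^\star,(\alpha_V^{-1})^*)$ for the data of the dual representation on $W:=V^*$. By Theorem \ref{adjoint-rep-n-hom-pre-lie} its dual representation lives on $W^*=(V^*)^*$ and is the triple $(\sigma^\star,-(r')^\star,((\alpha_V')^{-1})^*)$, where $\sigma$ denotes the subadjacent $n$-Hom-Lie representation attached to $(l',r')$ through Proposition \ref{teald}, and all the $\ast$- and $\star$-operations are now formed with respect to $\alpha$ and to the structure map $\alpha_V'$ on $W$. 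Under the canonical (graded) identification $(V^*)^*\cong V$ this is exactly the kind of triple appearing in the statement, so the proof reduces to simplifying each of its three slots.

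The structure-map slot is immediate: using $(\phi^{-1})^*=(\phi^*)^{-1}$ together with the reflexivity identification $(V^*)^*\cong V$, one gets $((\alpha_V')^{-1})^*=\big(((\alpha_V^{-1})^*)^{-1}\big)^*=\big((\alpha_V)^*\big)^*=\alpha_V$, which is the structure map claimed in the statement. For the two actions I would first record the naturality of the $\star$-construction over the defining sum of $\widetilde\rho$: since $r\mapsto r^\star$ and $l\mapsto l^\star$ are linear and commute with the substitutions $x_j\mapsto\alpha(x_j)$ and with the permutation carrying $x_i$ into the last slot, the formula defining $\widetilde\rho$ (see the display preceding Proposition \ref{teald}) dualizes term by term into
\begin{equation*}
\widetilde{\rho}^\star(x_1,\cdots,x_{n-1})=l^\star(x_1,\cdots,x_{n-1})+\sum_{i=1}^{n-1}(-1)^{i}(-1)^{|x_i||X|_{i+1}^{n-1}}r^\star(x_1,\cdots,\widehat{x_i},\cdots,x_{n-1},x_i).
\end{equation*}
Feeding $l'=\widetilde{\rho}^\star$ and $r'=-r^\star$ into the same defining sum for $\sigma$ and invoking this identity lets the $r^\star$-contributions recombine, so that $\sigma$ reduces to a single $\star$-dual; applying $(\widetilde\rho^\star)^\star=\widetilde\rho$ from Lemma \ref{adjoint-adjoit-rep} then turns $\sigma^\star$ into the left action $\widetilde\rho$ of the statement. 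In the same way $-(r')^\star=-(-r^\star)^\star$ is handled through the identity $(r^\star)^\star=r$ of Lemma \ref{adjoint-adjoit-rep}, producing the claimed right action $-r$.

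Finally I would confirm that the triple so obtained genuinely satisfies the defining axioms \eqref{cond-repres-r-1}--\eqref{cond-repres-l-r-4} of Definition \ref{defrep}; but this is automatic, since it is nothing other than the output of Theorem \ref{adjoint-rep-n-hom-pre-lie} applied to the (already verified) dual representation, so no new identity needs checking. The main obstacle lies entirely in the bookkeeping of the previous paragraph: one must pin down the Koszul signs produced by the graded evaluation pairing $(V^*)^*\cong V$ and verify that the $\star$-operation really does distribute across the subadjacent-representation sum with the signs as displayed, since it is precisely the interplay of these signs with the two identities of Lemma \ref{adjoint-adjoit-rep} that decides whether the left and right slots collapse to $\widetilde\rho$ and $-r$ rather than to $l$ and $r$. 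I would therefore carry out this sign analysis carefully, and only after it is settled invoke Lemma \ref{adjoint-adjoit-rep} to conclude.
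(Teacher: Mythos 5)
Your route does not prove the stated proposition, and the middle of your argument contradicts your own displayed identity. The paper's proof is purely slot-wise: it identifies $(V^*)^*=V$ and $(((\alpha_V^{-1})^*)^{-1})^*=\alpha_V$, and then applies Lemma \ref{adjoint-adjoit-rep}, together with linearity of the $\star$-operation, to each slot of the quadruple separately: $(\widetilde{\rho}^\star)^\star=\widetilde{\rho}$ and $(-r^\star)^\star=-(r^\star)^\star=-r$. You instead re-run the construction of Theorem \ref{adjoint-rep-n-hom-pre-lie} on $(V^*,l',r',\alpha_V')=(V^*,\widetilde{\rho}^\star,-r^\star,(\alpha_V^{-1})^*)$, which requires forming $\sigma=\widetilde{\rho'}$ from $(l',r')$ by the sum preceding Proposition \ref{teald}. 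But your own (correct) term-by-term dualization identity then forces a cancellation, not a ``recombination'':
\begin{equation*}
\sigma(x_1,\cdots,x_{n-1})=\widetilde{\rho}^\star(x_1,\cdots,x_{n-1})+\sum_{i=1}^{n-1}(-1)^{i}(-1)^{|x_i||X|_{i+1}^{n-1}}\bigl(-r^\star\bigr)(x_1,\cdots,\widehat{x_i},\cdots,x_{n-1},x_i)=l^\star(x_1,\cdots,x_{n-1}),
\end{equation*}
so $\sigma^\star=(l^\star)^\star=l$, not $\widetilde{\rho}$. The right slot fails as well: the Theorem's recipe outputs $-(r')^\star=-(-r^\star)^\star=+(r^\star)^\star=r$, not $-r$ as you assert; you obtain $-r$ only by dropping the Theorem's outer minus sign and applying $\star$ directly to the given slot $-r^\star$ — which is precisely the slot-wise reading the paper uses and you rejected. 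Carried out consistently, your approach yields the quadruple $(V,l,r,\alpha_V)$, which is a different statement from the proposition's $(V,\widetilde{\rho},-r,\alpha_V)$.

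There is also a quick sanity check showing your route cannot be repaired to reach the stated conclusion: the output of Theorem \ref{adjoint-rep-n-hom-pre-lie} is always a genuine representation, so if it literally equaled $(V,\widetilde{\rho},-r,\alpha_V)$, as your last paragraph claims, then $(V,\widetilde{\rho},-r,\alpha_V)$ would be a representation for every input $(V,l,r,\alpha_V)$ — contradicting the proposition immediately following this one in the paper, which shows this holds if and only if $r(\alpha(x_1),\cdots,\alpha(x_{n-1}))r(y_1,\cdots,y_{n-1})=-(-1)^{|X|^{n-1}|Y|^{n-1}}r(\alpha(y_1),\cdots,\alpha(y_{n-1}))r(x_1,\cdots,x_{n-1})$. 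The content of Proposition \ref{adjoit-adjoint-rep} is the involutivity of the slot-wise $\star$-dualization of the data, and the correct proof is the paper's short one: reflexivity $(V^*)^*=V$, collapse of the structure map, and the two identities of Lemma \ref{adjoint-adjoit-rep} extended by linearity to $-r^\star$.
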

\begin{proof}
It is obviously that $(V^\ast)^\ast=V$ and $(((\alpha_V^{-1})^\ast)^{-1})^\ast=\alpha_V$. Using also Lemma \ref{adjoint-adjoit-rep}, we obtain the result.
\end{proof}

\begin{pro}
Let $(V,l,r,\alpha_V)$ be a representation of an $n$-Hom-pre-Lie superalgebra $(\mathcal{A},\{\cdot,\cdots,\cdot\},\alpha)$, where $\alpha_V$ is invertible. Then the following conditions are equivalent:
\begin{enumerate}
    \item The quadruple $(V,\widetilde{\rho},-r,\alpha_V)$ is a representation of the $n$-Hom-pre-Lie superalgebra $(\mathcal{A},\{\cdot,\cdots,\cdot\},\alpha)$.
    \item The quadruple $(V^\ast,\widetilde{\rho}^\star+r^\star,r^\star,(\alpha_V^{-1})^\ast)$ is a representation of the $n$-Hom-pre-Lie superalgebra $(\mathcal{A},\{\cdot,\cdots,\cdot\},\alpha)$.
    \item $r(\alpha(x_1),\cdots,\alpha(x_{n-1}))r(y_1,\cdots,y_{n-1})=-(-1)^{|X|^{n-1}|Y|^{n-1}}r(\alpha(y_1),\cdots,\alpha(y_{n-1}))r(x_1,\cdots,x_{n-1}),$ for all $x_i,y_i\in\mathcal{H}(\mathcal{A}),\;1\leq i\leq n-1. $
\end{enumerate}
\end{pro}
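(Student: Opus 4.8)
The plan is to establish the three-way equivalence by proving $(1)\Leftrightarrow(3)$ directly and then $(1)\Leftrightarrow(2)$ through the dual-representation machinery of Theorem \ref{adjoint-rep-n-hom-pre-lie} and Proposition \ref{adjoit-adjoint-rep}. The guiding observation is that $\widetilde{\rho}$ is exactly the auxiliary map $\mu$ of Definition \ref{defrep}, and that by Proposition \ref{teald} the triple $(V,\widetilde{\rho},\alpha_V)$ is already a representation of the sub-adjacent $n$-Hom-Lie superalgebra $\mathcal{A}^c$ with no extra hypothesis on $r$. This trivialises several of the representation axioms before any computation begins.

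I would start from condition $(1)$. For the quadruple $(V,\widetilde{\rho},-r,\alpha_V)$ the map $-r$ is still super-skew-symmetric on its first $(n-2)$ slots, its first component $\widetilde{\rho}$ is a representation of $\mathcal{A}^c$ by Proposition \ref{teald}, and its associated $\mu$-map collapses to $l$, since $\widetilde{\rho}+\sum_i(-1)^i(\cdots)(-r)(\cdots)=\widetilde{\rho}-(\widetilde{\rho}-l)=l$. Consequently \eqref{cond-repres-r-1} and the requirement that the first component be an $\mathcal{A}^c$-representation hold automatically, and only the mixed compatibilities \eqref{cond-repres-l-r-1}--\eqref{cond-repres-l-r-4} are in question. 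For each of these I would write the identity for the new data $(\widetilde{\rho},-r,l)$, subtract the corresponding identity for the given representation $(l,r,\widetilde{\rho})$ (which holds by hypothesis, with $\mu=\widetilde{\rho}$), and simplify using $\widetilde{\rho}-l=\sum_i(-1)^i(\cdots)r(\cdots,\widehat{x_i},\cdots,x_i)$. Every term carrying $l$ or $\mu$ cancels, and what survives is purely bilinear in $r$: sums of expressions of the shape $r(\alpha(\cdot))\,r(\cdot)+(-1)^{\ast}r(\alpha(\cdot))\,r(\cdot)$ indexed by the insertions produced by $\widetilde{\rho}-l$.

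The key step is that each surviving relation is an iterated sum of the \emph{single} identity in $(3)$: substituting $(3)$ to super-anticommute every product $r(\alpha(\cdot))\,r(\cdot)$ cancels the two sums term by term, giving $(3)\Rightarrow(1)$. For the converse $(1)\Rightarrow(3)$ I would specialise the relation coming from \eqref{cond-repres-l-r-1}. When $n=2$ the sum $\widetilde{\rho}-l$ reduces to the single term $-r$ and the relation \emph{is} $(3)$; for $n>2$ it is a sum over the $(n-1)$ insertions, and I would recover the pointwise statement by exploiting the super-skew-symmetry of $r$ in its first $(n-2)$ arguments together with multilinearity, choosing the tuples so that all but the desired product drop out. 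This extraction is the main obstacle: the forward implication is a mechanical substitution, whereas passing from the \emph{summed} compatibility back to the \emph{pointwise} super-anticommutativity $(3)$ demands a careful combinatorial specialisation and scrupulous tracking of the Koszul signs $(-1)^{|X|^{n-1}|Y|^{n-1}}$.

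For $(1)\Leftrightarrow(2)$ I would avoid repeating the computation and argue structurally. By Theorem \ref{adjoint-rep-n-hom-pre-lie} the dual of any representation is a representation, and by Lemma \ref{adjoint-adjoit-rep} the operation $(\cdot)^\star$ is involutive on both $\widetilde{\rho}$ and $r$; combined as in Proposition \ref{adjoit-adjoint-rep}, these identify the quadruple in $(2)$ on $V^\ast$ with the dual of the quadruple in $(1)$. Hence $(1)$'s quadruple is a representation precisely when $(2)$'s quadruple is, and the already established $(1)\Leftrightarrow(3)$ closes the equivalence. Should the identification of $(2)$ with the dual require more sign care than the cited results provide, the fallback is to run the same subtract-and-reduce scheme directly on the axioms for $(V^\ast,\widetilde{\rho}^\star+r^\star,r^\star,(\alpha_V^{-1})^\ast)$, which again deposits exactly condition $(3)$; either route reduces the whole statement to the single reverse extraction flagged above.
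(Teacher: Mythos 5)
Your proposal is correct and takes essentially the same route as the paper's own proof: the paper establishes $(1)\Leftrightarrow(2)$ by citing exactly Theorem \ref{adjoint-rep-n-hom-pre-lie} and Proposition \ref{adjoit-adjoint-rep} (i.e., the dual-representation theorem plus involutivity of $(\cdot)^\star$), and asserts $(1)\Leftrightarrow(3)$ from a direct comparison of the representation axioms for $(V,\widetilde{\rho},-r,\alpha_V)$ with those of the given representation $(V,l,r,\alpha_V)$. You actually supply more detail than the paper, whose proof is a two-sentence sketch leaving implicit both the identification of the quadruple in $(2)$ with the dual of the quadruple in $(1)$ and the combinatorial extraction of the pointwise identity $(3)$ that you rightly flag as the delicate step.
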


\begin{proof}
The equivalence of conditions $1$ and $2$ is deduced directly from the Theorem \ref{adjoint-rep-n-hom-pre-lie} and Proposition \ref{adjoit-adjoint-rep}. By using the fact that $(V,l,r,\alpha_V)$ is a representation of an $n$-Hom-pre-Lie superalgebra $(\mathcal{A},\{\cdot,\cdots,\cdot\},\alpha)$, we deduce that conditions $1$ and $3$ are equivalent. 
\end{proof}
\begin{cor}
Let $(\mathcal{A},\{\cdot,\cdots,\cdot\},\alpha)$ be a regular $n$-Hom-pre-Lie superalgebra. Then $(\mathcal{A}^\ast,ad^\star,-R^\star,(\alpha_V^{-1})^\ast)$ is a representation of $(\mathcal{A},\{\cdot,\cdots,\cdot\},\alpha)$. 
\end{cor}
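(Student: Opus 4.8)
The plan is to recognize this statement as nothing more than the specialization of Theorem~\ref{adjoint-rep-n-hom-pre-lie} to the adjoint representation, so that the whole argument reduces to matching notation. First I would recall that, as noted just after Definition~\ref{defrep}, the triple $(\mathcal{A},L,R,\alpha)$ is a representation of $(\mathcal{A},\{\cdot,\cdots,\cdot\},\alpha)$ on itself --- the adjoint representation --- with structure map $\alpha_V=\alpha$. Since $\mathcal{A}$ is assumed regular, $\alpha$ is bijective, so the invertibility hypothesis of Theorem~\ref{adjoint-rep-n-hom-pre-lie} is fulfilled with the choices $V=\mathcal{A}$, $l=L$, $r=R$ and $\alpha_V=\alpha$.

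Next I would identify the induced map $\widetilde{\rho}$ attached to this particular representation. By the remark immediately following Proposition~\ref{teald}, when $(l,r,\alpha_V)=(L,R,\alpha)$ the map $\widetilde{\rho}$ coincides with $ad$, the adjoint representation of the sub-adjacent $n$-Hom-Lie superalgebra $(\mathcal{A}^c,[\cdot,\cdots,\cdot]^C,\alpha)$. With these identifications in hand, the dual representation delivered by Theorem~\ref{adjoint-rep-n-hom-pre-lie} is exactly $(\mathcal{A}^\ast,\widetilde{\rho}^\star,-r^\star,(\alpha^{-1})^\ast)=(\mathcal{A}^\ast,ad^\star,-R^\star,(\alpha^{-1})^\ast)$, which is precisely the quadruple in the statement.

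The few points that need attention are bookkeeping rather than genuine obstacles. One must confirm that regularity of $\mathcal{A}$ forces $\alpha$ to be bijective, so that $\alpha^{-1}$, the dualization $(\alpha^{-1})^\ast$, and hence $ad^\star$ and $R^\star$ are all well defined; and one must check that the starred operators obtained by specializing formulas~\eqref{star-right-repr1}--\eqref{star-right-repr2} to $\widetilde{\rho}=ad$ and $r=R$ are indeed the operators denoted $ad^\star$ and $R^\star$ in the corollary. Once these routine identifications are verified, no further computation is required: the result is an immediate instance of Theorem~\ref{adjoint-rep-n-hom-pre-lie}.
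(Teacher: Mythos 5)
Your proposal is correct and coincides with the paper's intended argument: the corollary is stated without proof precisely because it is the immediate specialization of Theorem~\ref{adjoint-rep-n-hom-pre-lie} to the adjoint representation $(\mathcal{A},L,R,\alpha)$, with regularity supplying the required invertibility of $\alpha_V=\alpha$ and the remark after Proposition~\ref{teald} supplying the identification $\widetilde{\rho}=ad$. You also correctly resolve the paper's slight notational slip, reading the $(\alpha_V^{-1})^\ast$ in the corollary's statement as $(\alpha^{-1})^\ast$, since here $V=\mathcal{A}$ and $\alpha_V=\alpha$.
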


\begin{defi}
Let $(\mathcal{A},\{\cdot,\cdots,\cdot\},\alpha)$ be an $n$-Hom-pre-Lie superalgebra and $(V,l,r,\alpha_V)$ be a representation. An even linear map $T:V \rightarrow \mathcal{A}$ is called an $\mathcal O$-operator associated to $(V,l,r,\alpha_V)$ if $T$ satisfies
\begin{equation}\label{cond-O-operator}
    \alpha\circ T=T\circ\alpha_V,
\end{equation}
\begin{equation}\label{O-op n-pre-Lie}
    \{Tu_1,\cdots,Tu_n\}=T\Big(l(Tu_1,\cdots,Tu_{n-1})(u_n)+\sum_{i=1}^{n-1}(-1)^{i+1}(-1)^{|u_i||U|^n_{i+1}}r(Tu_1,\cdots,\widehat{Tu_i},\cdots,Tu_n)(u_i)\Big),
\end{equation}
$\forall u_i\in \mathcal{H}(V),\;1\leq i\leq n$. If $(V,l,r,\alpha_V)=(\mathcal{A},L,R,\alpha)$, then $T$ is called a Rota-Baxter operator on $\mathcal{A}$ of weight zero denoted by $P$.
\end{defi}

\begin{pro}\label{n-Hom-pre-Lie-Rota-B}
Let $P$ be a Rota-Baxter operator of weight zero on an $n$-Hom-pre-Lie superalgebra $(\mathcal{A},\{\cdot,\cdots,\cdot\},\alpha)$. Then $(\mathcal{A},\{\cdot,\cdots,\cdot\}_P,\alpha)$ is an $n$-Hom-pre-Lie superalgebra where $\{\cdot,\cdots,\cdot\}_P$ is defined by 
$$\{x_1,\cdots,x_n\}_P=\displaystyle\sum_{i=1}^n\{P(x_1),\cdots,P(x_{i-1}),x_i,P(x_{i+1}),\cdots,P(x_n)\},$$
for all $x_i\in\mathcal{H}(\mathcal{A}),\;1\leq i\leq n$.
\end{pro}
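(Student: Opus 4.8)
The plan is to verify directly that the triple $(\mathcal{A},\{\cdot,\cdots,\cdot\}_P,\alpha)$ meets the three requirements in the definition of an $n$-Hom-pre-Lie superalgebra: that $\{\cdot,\cdots,\cdot\}_P$ is even and super-skew-symmetric on its first $(n-1)$ arguments, and that it obeys the two structural identities \eqref{n-Hom-pre-Lie-sup 1} and \eqref{n-Hom-pre-Lie-sup 2}. Evenness is immediate since $P$ and $\{\cdot,\cdots,\cdot\}$ are even. For the super-skew-symmetry I would expand $\{x_1,\cdots,x_n\}_P=\sum_{i=1}^n\{Px_1,\cdots,x_i,\cdots,Px_n\}$ and swap two adjacent entries $x_j,x_{j+1}$ with $j\leq n-2$. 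The two ``free-slot'' summands $i=j$ and $i=j+1$ exchange roles, while each remaining summand transforms by the super-skew-symmetry of the original bracket $\{\cdot,\cdots,\cdot\}$ (using that $P$ is even, so $|Px_k|=|x_k|$). Collecting the resulting factor $-(-1)^{|x_j||x_{j+1}|}$ in each summand gives the claim; this is precisely the bookkeeping already carried out in the proof of Proposition \ref{carpre}.

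The key tool for the two structural identities is the Rota--Baxter relation itself. Since $P$ is a Rota--Baxter operator of weight zero, it is an $\mathcal{O}$-operator with respect to the adjoint representation $(\mathcal{A},L,R,\alpha)$, so identity \eqref{O-op n-pre-Lie} reads
\begin{equation*}
\{Px_1,\cdots,Px_n\}=P\Big(L(Px_1,\cdots,Px_{n-1})(x_n)+\sum_{i=1}^{n-1}(-1)^{i+1}(-1)^{|x_i||X|^n_{i+1}}R(Px_1,\cdots,\widehat{Px_i},\cdots,Px_n)(x_i)\Big),
\end{equation*}
together with $\alpha\circ P=P\circ\alpha$. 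The point is that a fully $P$-conjugated bracket can always be extracted as $P$ applied to a sum of ``one-slot-free'' brackets, and each such free-slot bracket is, up to a sign produced by the super-skew-symmetry of $\{\cdot,\cdots,\cdot\}$, one of the summands of $\{\cdot,\cdots,\cdot\}_P$. Combined with $\alpha P=P\alpha$, this lets one rewrite $P\{y_1,\cdots,y_n\}_P$ in terms of $\{Py_1,\cdots,Py_n\}$, which is the computational engine of the argument.

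With this in hand I would verify \eqref{n-Hom-pre-Lie-sup 1} by expanding $\{\alpha(x_1),\cdots,\alpha(x_{n-1}),\{y_1,\cdots,y_n\}_P\}_P$ via the definition of $\{\cdot,\cdots,\cdot\}_P$: the last entry $\{y_1,\cdots,y_n\}_P$ acquires a $P$ in every summand except the one where it occupies the free slot, and the Rota--Baxter relation converts these into brackets of the form $\{Py_1,\cdots,Py_n\}$ acted on by the outer operators. After using $\alpha P=P\alpha$ to move the twists past $P$, the whole expression should collapse term by term onto the original identity \eqref{n-Hom-pre-Lie-sup 1} for $\{\cdot,\cdots,\cdot\}$, which holds by hypothesis; the commutator $[\cdot,\cdots,\cdot]^C$ occurring there is reproduced because the sum over free slots assembles precisely the defining combination \eqref{eq:ncc}. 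Identity \eqref{n-Hom-pre-Lie-sup 2} is handled by the same scheme, now starting from $\{[x_1,\cdots,x_n]^C_P,\alpha(y_1),\cdots,\alpha(y_{n-1})\}_P$ and tracking how the modified commutator $[\cdot,\cdots,\cdot]^C_P$ of $\{\cdot,\cdots,\cdot\}_P$ decomposes through $P$. The overall structure parallels the proof of Proposition \ref{pro:npreLieT}.

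The main obstacle is the sign and index bookkeeping in these last two steps. The delicate part is that expanding the nested products $\{\cdot,\cdots,\cdot\}_P$ produces double sums over the free-slot positions of the inner and outer brackets, and one must repeatedly invoke super-skew-symmetry on the first $(n-1)$ arguments to bring every $P$-image into a canonical order before the Rota--Baxter relation can be applied. Keeping the Koszul signs $(-1)^{|x_i||X|^n_{i+1}}$ consistent through these rearrangements, checking that the free-slot sums reassemble the commutator $[\cdot,\cdots,\cdot]^C$ exactly, and confirming that the residual signs cancel so that the telescoped expression matches \eqref{n-Hom-pre-Lie-sup 1}--\eqref{n-Hom-pre-Lie-sup 2} for the original structure, is where essentially all of the work concentrates.
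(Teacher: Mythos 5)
Your proposal is correct and follows essentially the same route as the paper: the identity you isolate as the computational engine is exactly the paper's Remark \ref{p-crochet-p}, $P(\{x_1,\cdots,x_n\}_P)=\{P(x_1),\cdots,P(x_n)\}$, and your observation that the free-slot sums reassemble $[\cdot,\cdots,\cdot]^C$ is precisely the content of the paper's lemma that $P$ is a Rota--Baxter operator of weight zero on the sub-adjacent $n$-Hom-Lie superalgebra $\mathcal{A}^C$. The remaining steps you outline --- expanding $\{\alpha(x_1),\cdots,\alpha(x_{n-1}),\{y_1,\cdots,y_n\}_P\}_P$ by the definition of $\{\cdot,\cdots,\cdot\}_P$, commuting $\alpha$ past $P$, and collapsing term by term onto the identities \eqref{n-Hom-pre-Lie-sup 1}--\eqref{n-Hom-pre-Lie-sup 2} for the original bracket --- match the paper's computation, which likewise treats \eqref{n-Hom-pre-Lie-sup 2} ``by the same way.''
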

To show this proposition we need the following lemma.
\begin{lem}
Let $P$ be a Rota-Baxter operator of weight zero on an $n$-Hom-pre-Lie superalgebra $(\mathcal{A},\{\cdot,\cdots,\cdot\},\alpha)$. Then $P$ is a Rota-Baxter operator of weight zero on the sub-adjacent n-Hom-Lie superalgebra $\mathcal{A}^C$.
\end{lem}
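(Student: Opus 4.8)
The plan is to avoid expanding the defining identities head-on and instead route the proof through the graph / semi-direct-product picture, so that the sub-adjacent Lie statement falls out of the pre-Lie one essentially for free.

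First I would unwind the definitions. Being a Rota--Baxter operator of weight zero on $(\mathcal{A},\{\cdot,\cdots,\cdot\},\alpha)$ means, by definition, that $P$ is an $\mathcal{O}$-operator associated to the adjoint representation $(\mathcal{A},L,R,\alpha)$; in particular $P$ satisfies $\alpha\circ P=P\circ\alpha$ together with the identity \eqref{O-op n-pre-Lie} with $(l,r)=(L,R)$. What must be established is that $P$ is an $\mathcal{O}$-operator associated to the adjoint representation $(\mathcal{A},ad,\alpha)$ of the sub-adjacent $n$-Hom-Lie superalgebra $\mathcal{A}^c$, i.e. that $P$ obeys \eqref{eq:Ooperator} with $\rho=ad$. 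Since the compatibility $\alpha\circ P=P\circ\alpha$ is already in hand, only the bracket identity is at issue.

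Next I would record the graph characterisation on the pre-Lie side. Evaluating the semi-direct product bracket \eqref{eq:sum} on the tuple $(Px_1,x_1),\cdots,(Px_n,x_n)\in Gr(P)$ inside $\mathcal{A}\ltimes_{L,R}^{\alpha}\mathcal{A}$ produces a first component $\{Px_1,\cdots,Px_n\}$ and a second component equal precisely to the argument of $P$ in \eqref{O-op n-pre-Lie}; hence $P$ satisfies \eqref{O-op n-pre-Lie} if and only if this product again lands in $Gr(P)$, that is, if and only if $Gr(P)$ is an $n$-Hom-pre-Lie subalgebra of $\mathcal{A}\ltimes_{L,R}^{\alpha}\mathcal{A}$. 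This is the pre-Lie analogue of the graph characterisation of Lie $\mathcal{O}$-operators proved after Definition \ref{defi:O-operator}, and it is a one-line computation from \eqref{eq:sum}. I would then transport closure from the pre-Lie product to the sub-adjacent bracket: by \eqref{eq:ncc} the bracket $[\cdot,\cdots,\cdot]^C$ of $\mathcal{A}\ltimes_{L,R}^{\alpha}\mathcal{A}$ is a signed sum of reorderings of the pre-Lie product $\{\cdot,\cdots,\cdot\}_{\mathcal{A}\oplus\mathcal{A}}$, and a subspace closed under the pre-Lie product on all tuples of its elements is automatically closed under every such combination. Thus $Gr(P)$ is in particular a subalgebra for $[\cdot,\cdots,\cdot]^C$. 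Now the Corollary following Proposition \ref{teald} (applied with $(l,r)=(L,R)$, for which $\widetilde{\rho}=ad$) identifies the sub-adjacent $n$-Hom-Lie superalgebra of $\mathcal{A}\ltimes_{L,R}^{\alpha}\mathcal{A}$ with the semi-direct product $\mathcal{A}^c\ltimes_{ad}^{\alpha}\mathcal{A}$ built via Proposition \ref{direct-sum-n-hom-lie-superalgebras}. Hence $Gr(P)$ is an $n$-Hom-Lie subalgebra of $\mathcal{A}^c\ltimes_{ad}^{\alpha}\mathcal{A}$, and by the graph characterisation of Lie $\mathcal{O}$-operators this says exactly that $P$ is an $\mathcal{O}$-operator with respect to $(\mathcal{A},ad,\alpha)$, i.e. a weight-zero Rota--Baxter operator on $\mathcal{A}^c$, as desired.

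As for the difficulty: the genuinely substantive point is the inheritance of closure in the third step, which is conceptual rather than computational, once the pre-Lie graph characterisation and the identification $\widetilde{\rho}=ad$ are in place. Should one instead attempt a direct verification of \eqref{eq:Ooperator} --- expanding $[Px_1,\cdots,Px_n]^C$ through \eqref{eq:ncc}, applying \eqref{O-op n-pre-Lie} to each resulting all-$P$ product, and matching against the expansion of the right-hand side --- the main obstacle would be the Koszul-sign bookkeeping together with the reindexing that interchanges the outer summation index with the inner one; this is precisely the calculation the graph route is designed to bypass.
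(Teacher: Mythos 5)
Your proposal is correct (at the paper's level of rigor concerning Koszul signs), but it follows a genuinely different route from the paper's own proof. The paper argues by direct computation: it expands $[P(x_1),\cdots,P(x_n)]^C$ through \eqref{eq:ncc}, applies the Rota--Baxter identity to each all-$P$ pre-Lie product in the symmetrized form of Remark \ref{p-crochet-p}, namely $\{P(x_1),\cdots,P(x_n)\}=P\big(\sum_{j=1}^n\{P(x_1),\cdots,x_j,\cdots,P(x_n)\}\big)$, then interchanges the double sum and reassembles the result as $P\big(\sum_{j=1}^n[P(x_1),\cdots,x_j,\cdots,P(x_n)]\big)$ --- precisely the sign-and-reindexing computation your final paragraph predicts and deliberately avoids. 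Your graph route instead chains together results the paper has already established: the pre-Lie graph characterisation you interpolate is indeed a one-line consequence of \eqref{eq:sum} with $(l,r)=(L,R)$ and is the exact analogue of the Lie-side graph proposition following Proposition \ref{direct-sum-n-hom-lie-superalgebras}; closure of $Gr(P)$ under the sub-adjacent bracket of $\mathcal{A}\ltimes_{L,R}^{\alpha}\mathcal{A}$ follows because \eqref{eq:ncc} is a signed sum of pre-Lie products of tuples from $Gr(P)$; and the identification of that sub-adjacent algebra with $\mathcal{A}^c\ltimes_{ad}^{\alpha}\mathcal{A}$ is exactly \eqref{brac} together with the paper's remark after Proposition \ref{teald} that $\widetilde{\rho}=ad$ when $(l,r)=(L,R)$. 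What your approach buys is the elimination of the sign bookkeeping and, more substantially, a generalization for free: the identical argument shows that any $\mathcal{O}$-operator on $(\mathcal{A},\{\cdot,\cdots,\cdot\},\alpha)$ with respect to an arbitrary representation $(V,l,r,\alpha_V)$ is an $\mathcal{O}$-operator on $\mathcal{A}^c$ with respect to $(V,\widetilde{\rho},\alpha_V)$, of which the Lemma is the adjoint special case $(V,l,r,\alpha_V)=(\mathcal{A},L,R,\alpha)$; what the paper's computation buys is self-containedness, four displayed lines depending only on Remark \ref{p-crochet-p}. Two small points you should add for completeness: a subalgebra in the Hom setting must also be invariant under the structure map, so note that $(\alpha\oplus\alpha)(Gr(P))\subseteq Gr(P)$, immediate from $\alpha\circ P=P\circ\alpha$; and when you pass from \eqref{eq:Ooperator} with $\rho=ad$ back to the weight-zero Rota--Baxter identity on $\mathcal{A}^C$, you should record that the prefactor $(-1)^{n-i}(-1)^{|x_i||X|_{i+1}}$ is absorbed by the full super-skew-symmetry of $[\cdot,\cdots,\cdot]^C$ when $x_i$ is moved from the last slot back to slot $i$, so that both formulations of the conclusion agree.
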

\begin{proof}
It is obvious that $\alpha\circ P=P\circ\alpha$. \\
Let $x_i\in\mathcal{H}(\mathcal{A}),\;1\leq i\leq n$, we have
\begin{align*}
[P(x_1),\cdots,P(x_n)]^C&=\displaystyle\sum_{i=1}^n(-1)^{n-i}(-1)^{|x_i||X|^n_{i+1}}\{P(x_1),\cdots,\hat{P(x_i)},\cdots,P(x_n),P(x_i)\}\\&=\displaystyle\sum_{i=1}^n(-1)^{n-i}(-1)^{|x_i||X|^n_{i+1}}P\Big(\displaystyle\sum_{j=1}^n\{P(x_1),\cdots,x_j,\cdots,\hat{P(x_i)},\cdots,P(x_n),P(x_i)\}\Big)\\&=P\Big(\displaystyle\sum_{j=1}^n\displaystyle\sum_{i=1}^n(-1)^{n-i}(-1)^{|x_i||X|^n_{i+1}}\{P(x_1),\cdots,x_j,\cdots,\hat{P(x_i)},\cdots,P(x_n),P(x_i)\}\Big)\\&=P\Big(\displaystyle\sum_{j=1}^n[P(x_1),\cdots,x_j,\cdots,P(x_n)]\Big).    
\end{align*}
Then $P$ is a Rota-Baxter operator on $(\mathcal{A}^C,[\cdot,\cdots,\cdot]^C,\alpha)$
\end{proof}

\begin{rmk}\label{p-crochet-p}
$P(\{x_1,\cdots,x_n\}_p)=\{P(x_1),\cdots,P(x_{n})\},\;\forall x_i\in\mathcal{H}(\mathcal{A}),\;1\leq i\leq n$.
\end{rmk}

\begin{proof}[Proof of Proposition \ref{n-Hom-pre-Lie-Rota-B}]
Let $x_i,y_i\in\mathcal{H}(\mathcal{A}),\;1\leq i\leq n-1$. By using Remark \ref{p-crochet-p} and condition \eqref{n-Hom-pre-Lie-sup 1}, we have
{\small\begin{align*}
 \{\alpha(x_1),\cdots,\alpha(x_{n-1}),\{y_1,\cdots,y_n\}_P\}_P&=\displaystyle\sum_{i=1}^{n-1}\{P(\alpha(x_1)),\cdots,\alpha(x_i),\cdots,P(\alpha(x_{n-1})),P(\{y_1,\cdots,y_n\}_P)\}\\&+\{P(\alpha(x_1)),\cdots,P(\alpha(x_{n-1})),\{y_1,\cdots,y_n\}_P\}\\&=\displaystyle\sum_{i=1}^{n-1}\{P(\alpha(x_1)),\cdots,\alpha(x_i),\cdots,P(\alpha(x_{n-1})),\{P(y_1),\cdots,P(y_n)\}\}\\&+\displaystyle\sum_{i=1}^n\{P(\alpha(x_1)),\cdots,P(\alpha(x_{n-1})),\{P(y_1),\cdots,y_i,\cdots,P(y_n)\}\}\\&=\displaystyle\sum_{i=1}^{n-1}\displaystyle\sum_{j=1}^{n-1}(-1)^{|X|^{n-1}|Y|^{j-1}}\{\alpha(P(y_1)),\cdots,[P(x_1),\cdots,x_i,\cdots,P(x_{n-1}),P(y_j)]^C,\cdots,\alpha(P(y_n))\}\\&+(-1)^{|X|^{n-1}|Y|^{n-1}}\{\alpha(P(y_1)),\cdots,\alpha(P(y_{n-1})),\{P(x_1),\cdots,x_i,\cdots,P(x_i),P(y_n)\}\}\\&+ \displaystyle\sum_{i=1}^n\displaystyle\sum_{j=1}^{i-1}(-1)^{|X|^{n-1}|Y|^{j-1}}\{\alpha(P(y_1)),\cdots,[P(x_1),\cdots,P(x_{n-1}),P(y_j)]^C,\cdots,\alpha(y_i),\cdots,\alpha(P(y_n))\}\\&+ \displaystyle\sum_{i=1}^n\displaystyle\sum_{j=i+1}^{n-1}(-1)^{|X|^{n-1}|Y|^{j-1}}\{\alpha(P(y_1)),\cdots,\alpha(y_i),\cdots,[P(x_1),\cdots,P(x_{n-1}),P(y_j)]^C,\cdots,\alpha(P(y_n))\}\\&+\displaystyle\sum_{i=1}^n(-1)^{|X|^{n-1}|Y|^{i-1}}\{\alpha(P(y_1)),\cdots,[P(x_1),\cdots,P(x_{n-1}),y_j]^C,\cdots,\alpha(P(y_n))\}\\&+(-1)^{|X|^{n-1}|Y|^{n-1}}\{\alpha(P(y_1)),\cdots,\alpha(y_i),\cdots,\alpha(P(y_{n-1})),\{P(x_1),\cdots,P(x_{n-1}),P(y_n)\}\}. 
\end{align*}}
By using \eqref{n-Hom-pre-Lie-sup 1}, a direct computation gives that
\begin{align*}
\{\alpha(x_1),\cdots,\alpha(x_{n-1}),\{y_1,\cdots,y_n\}_P\}_P&=\displaystyle\sum_{i=1}^{n-1}(-1)^{|X|^{n-1}|Y|^{i-1}}\{\alpha(y_1),\cdots,\alpha(y_{i-1}),\{x_1,\cdots,x_{n-1},y_i\}_P,\alpha(y_{i+1}),\cdots,\alpha(y_n)\}_P\\&+(-1)^{|X|^{n-1}|Y|^{n-1}}\{\alpha(y_1),\cdots,\alpha(y_{n-1}),\{x_1,\cdots,x_{n-1},y_n\}_P\}_P,
\end{align*}
 which implies that $\{\cdot,\cdots,\cdot\}_P$ satisfies the condition \eqref{n-Hom-pre-Lie-sup 1}. By the same way, we show that the condition \eqref{n-Hom-pre-Lie-sup 2} satisfies by $\{\cdot,\cdots,\cdot\}_P$. Then $\{\cdot,\cdots,\cdot\}_P$ gives an $n$-Hom-pre-Lie superalgebra structure on $\mathcal{A}$.  
\end{proof}
\begin{pro}
Let $(P_1, P_2)$ be a pair of commuting Rota-Baxter operators (of weight zero) on an $n$-Hom-Lie
superalgebra $(\mathcal{A}, [\cdot, \cdots , \cdot],\alpha)$. Then $P_2$ is a Rota-Baxter operator (of weight zero) on the associated $n$-Hom-pre-Lie superalgebra defined by
$$\{x_1,\cdots,x_n\}=[P_1(x_1),\cdots,P_1(x_{n-1}),x_n],\;\forall x_i\in\mathcal{H}(\mathcal{A}),1\leq i\leq n.$$

\end{pro}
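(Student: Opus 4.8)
The goal is to check that $P_2$ satisfies the two defining conditions \eqref{cond-O-operator} and \eqref{O-op n-pre-Lie} of an $\mathcal{O}$-operator relative to the adjoint representation $(\mathcal{A},L,R,\alpha)$ of the $n$-Hom-pre-Lie superalgebra $(\mathcal{A},\{\cdot,\cdots,\cdot\},\alpha)$ that $P_1$ produces through Proposition \ref{pro:npreLieT}. The compatibility $\alpha\circ P_2=P_2\circ\alpha$ is nothing but the hypothesis that $P_2$ is a Rota-Baxter operator on $(\mathcal{A},[\cdot,\cdots,\cdot],\alpha)$, so the whole content sits in the multiplicative identity \eqref{O-op n-pre-Lie}.

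First I would record the two multiplications of the induced structure: from $\{x_1,\cdots,x_n\}=[P_1(x_1),\cdots,P_1(x_{n-1}),x_n]$ one reads off, using \eqref{eq:R}, that $L(x_1,\cdots,x_{n-1})x_n=[P_1(x_1),\cdots,P_1(x_{n-1}),x_n]$ and $R(x_1,\cdots,x_{n-1})x_n=\{x_n,x_1,\cdots,x_{n-1}\}=[P_1(x_n),P_1(x_1),\cdots,P_1(x_{n-2}),x_{n-1}]$. Next I would expand the left-hand side of \eqref{O-op n-pre-Lie}, namely $\{P_2(x_1),\cdots,P_2(x_n)\}=[P_1P_2(x_1),\cdots,P_1P_2(x_{n-1}),P_2(x_n)]$, and invoke the commutation $P_1P_2=P_2P_1$ to rewrite it as $[P_2(a_1),\cdots,P_2(a_n)]$, where $a_j=P_1(x_j)$ for $1\le j\le n-1$ and $a_n=x_n$.

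The core of the proof is to apply the Rota-Baxter identity \eqref{eq:Ooperator} for $P_2$ on the underlying $n$-Hom-Lie superalgebra to $[P_2(a_1),\cdots,P_2(a_n)]$. This writes it as $P_2$ applied to a sum over $i$ of brackets $[P_2(a_1),\cdots,\widehat{P_2(a_i)},\cdots,P_2(a_n),a_i]$. I expect the $i=n$ summand to reproduce $L(P_2(x_1),\cdots,P_2(x_{n-1}))(x_n)$ after one more use of $P_1P_2=P_2P_1$, while for each $i\le n-1$ I would transport the trailing entry $a_i=P_1(x_i)$ back to the first slot by super-skew-symmetry \eqref{SuperSkewSym}, so that the bracket becomes exactly the one defining $R(P_2(x_1),\cdots,\widehat{P_2(x_i)},\cdots,P_2(x_n))(x_i)$. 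Matching the resulting coefficients against those in \eqref{O-op n-pre-Lie} would then finish the argument.

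The step I expect to be hardest is precisely this sign reconciliation. Identity \eqref{eq:Ooperator} contributes the coefficient $(-1)^{n-i}(-1)^{|x_i||X|^n_{i+1}}$, whereas \eqref{O-op n-pre-Lie} asks for $(-1)^{i+1}(-1)^{|x_i||X|^n_{i+1}}$; the cyclic move of $P_1(x_i)$ across the other $n-1$ entries supplies an extra $(-1)^{n-1}$ together with a Koszul factor in the degrees crossed. The integer parts match since $2n-i-1\equiv i+1\pmod 2$, so the delicate point is to confirm that the accumulated Koszul sign is exactly the one prescribed by the super-skew-symmetry convention governing $R$. I would keep this under control by handling the $i=n$ term in isolation and then verifying the degree exponents of the $i\le n-1$ terms entry by entry, which is where the bulk of the bookkeeping lies.
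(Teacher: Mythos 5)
Your proposal follows the paper's own proof step for step: expand $\{P_2(x_1),\cdots,P_2(x_n)\}=[P_1P_2(x_1),\cdots,P_1P_2(x_{n-1}),P_2(x_n)]$, use the commutation $P_1P_2=P_2P_1$, apply the weight-zero Rota-Baxter identity \eqref{eq:Ooperator} for $P_2$ on the underlying $n$-Hom-Lie superalgebra, and then recognize the $i=n$ summand as the $L$-term and the $i\leq n-1$ summands as $R$-terms after cyclically moving $P_1(x_i)$ to the first slot. The Koszul-sign reconciliation you single out as the delicate step is exactly the point the paper passes over without comment (its last line simply records the coefficient $(-1)^{i+1}(-1)^{|x_i||X|^n_{i+1}}$ with no explicit accounting of the Koszul factor produced by the cyclic move), so your sketch is correct and matches the paper's argument, with that final bookkeeping the only detail left to write out.
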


\begin{proof}
For any $x_i\in\mathcal{H}(\mathcal{A}),\;1\leq i\leq n$, we have
\begin{align*}
   &\quad \alpha \circ P_2=P_2\circ \alpha\\
     &\quad \;\{P_2(x_1),\cdots,P_2(x_n)\}=[P_1(P_2(x_1)),\cdots,P_1(P_2(x_{n-1})),P_2(x_n)]\\
     &=[P_2(P_1(x_1)),\cdots,P_2(P_1(x_{n-1})),P_2(x_n)]\\
     &=P_2\Big([P_2(P_1(x_1)),\cdots,P_2(P_1(x_{n-1})),x_n]\\
     &+\sum_{i=1}^{n-1}(-1)^{n-i}(-1)^{|x_i||X|^n_{i+1}}[P_2(P_1(x_1)),\cdots,\widehat{P_2(P_1(x_i))},\cdots,P_2(P_1(x_{n-1})),P_1(x_i)]\Big)\\
     &=P_2\Big(\{P_2(x_1),\cdots,P_2(x_{n-1}),x_n\}\\
      &+\sum_{i=1}^{n-1}(-1)^{i+1}(-1)^{|x_i||X|^n_{i+1}}\{x_i,P_2(x_1),\cdots,\widehat{P_2(x_i)},\cdots,P_2(x_{n-1}),P_2(x_n)\}\Big).
\end{align*}
Then $P_2$ is a Rota-Baxter operator (of weight zero) on the $n$-Hom-pre-Lie superalgebra $(A, \{\cdot, \cdots, \cdot\},\alpha)$.
\end{proof}
\section{$n$-Hom-pre-Lie superalgebras induced by Hom-pre-Lie superalgebras}
In \cite{Hajjaji-Chtioui-Mabrouk-Makhlouf}, the authors introduced the construction of an $(n+1)$-pre-Lie algebra from an $n$-pre-Lie algebra using the trace map. In this section we generalize this construction to the super-Hom case by a new approach which is the construction of an $n$-Hom-pre-Lie superalgebras from a Hom-pre-Lie superalgebras, the same work has been studied in the Lie case ( see \cite{Mabrouk-Ncib}). We start with the data of an even super-skew-symmetric $(n-2)$-linear form $\Phi:\mathcal{A}\times\cdots\times\mathcal{A}\to\mathbb{K}$ (i.e. $\Phi(x_1,\cdots,x_{n-2})=0,\;\forall x_i\in\mathcal{H}(\mathcal{A}),\;1\leq i\leq n-2$, where $|x_1|+\cdots+|x_{n-2}|\equiv 1[2]$) and we define from this form an $n$-linear map which is super-skew-symmetric on the first $(n-1)$ variables. Let us define a Hom-pre-Lie superalgebra as a triple $(\mathcal{A},\circ,\alpha)$ consisting of a $\mathbb{Z}_2$-vector space $\mathcal{A}$, an even bilinear map $\circ:\mathcal{A}\times\mathcal{A}\to\mathcal{A}$ and an even linear map $\alpha:\mathcal{A}\to\mathcal{A}$, such that the following condition hold:
\begin{equation}\label{hom-pre-lie}
\mathfrak{ass}(x,y,z)-(-1)^{|x||y|}\mathfrak{ass}(y,x,z)=0,\;\forall x,y,z\in\mathcal{H}(\mathcal{A}),   
\end{equation}
where, $\mathfrak{ass}(x,y,z)=\alpha(x)\circ(y\circ z)-(x\circ y)\circ\alpha(z)$. \\

Let $(\mathcal{A},\circ,\alpha)$ be a Hom-pre-Lie superalgebra. Define the $n$-ary product as follows: 
\begin{equation}\label{phi-trace}
 \{x_1,\cdots,x_{n-1},x_n\}_\Phi=\displaystyle\sum_{k=1}^{n-1} (-1)^{k+1}(-1)^{|x_k||X|^{n-1}_{k+1}}\Phi(x_1,\cdots,\hat{x_k},\cdots,x_{n-1})(x_k\circ x_n), 
\end{equation}
for all $x_k\in\mathcal{H}(\mathcal{A}),\;1\leq k\leq n$.\\

It is clear that $\{\cdot,\cdots,\cdot\}_\Phi$ is an even $n$-linear map.
\begin{pro}
The $n$-ary product $ \{\cdot,\cdots,\cdot\}_\Phi$ is super-skew-symmetric on the first $(n-1)$ variables.
\end{pro}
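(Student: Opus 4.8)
The plan is to verify super-skew-symmetry on the first $(n-1)$ variables by checking the sign rule under the transposition of two adjacent arguments $x_j,x_{j+1}$ with $1\le j\le n-2$; since such adjacent transpositions generate the symmetric group on the first $n-1$ slots, this suffices, and the passage to an arbitrary transposition is exactly the equivalence \eqref{SuperSkewSym}$\Leftrightarrow$\eqref{SuperSkewSym1}. First I would expand $\{x_1,\cdots,x_{n-1},x_n\}_\Phi$ using \eqref{phi-trace} and split the summation index $k$ into the three disjoint cases $k\notin\{j,j+1\}$, $k=j$ and $k=j+1$, treating the last argument $x_n$ as a spectator throughout since it only appears in the right factor $x_k\circ x_n$.

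For the terms with $k\notin\{j,j+1\}$, both $x_j$ and $x_{j+1}$ occur inside $\Phi$ and remain adjacent after deleting $x_k$, so the super-skew-symmetry of $\Phi$ replaces $\Phi(\cdots,x_j,x_{j+1},\cdots)$ by $-(-1)^{|x_j||x_{j+1}|}\Phi(\cdots,x_{j+1},x_j,\cdots)$. The grading prefactor $(-1)^{k+1}(-1)^{|x_k||X|^{n-1}_{k+1}}$ is left unchanged by the swap, because the pair $|x_j|+|x_{j+1}|$ either enters $|X|^{n-1}_{k+1}$ as a whole (when $k<j$) or not at all (when $k>j+1$); hence every such term simply acquires the global factor $-(-1)^{|x_j||x_{j+1}|}$.

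The only delicate point is the pair of distinguished terms $k=j$ and $k=j+1$, in which the swapped variable is pulled out of $\Phi$ and composed on the right with $x_n$. Here the two terms exchange roles under the transposition: the $k=j$ term of the swapped product (with $x_{j+1}\circ x_n$ factored out and $x_j$ sitting inside $\Phi$) must be compared with the $k=j+1$ term of the original, and conversely. Tracking the explicit factors I would use $|X|^{n-1}_{j+1}=|x_{j+1}|+|X|^{n-1}_{j+2}$ together with the index shift $(-1)^{j+1}\leftrightarrow(-1)^{j+2}$; a short computation then shows that each distinguished term of the swapped product equals $-(-1)^{|x_j||x_{j+1}|}$ times the matching distinguished term of the original. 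The sign bookkeeping in this step is the main obstacle, since one must simultaneously account for the extra $(-1)$ coming from the index shift and for the rearrangement of the grading weights $|X|^{n-1}_{j+1}$ versus $|X|^{n-1}_{j+2}$.

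Collecting the three contributions, every summand of $\{x_1,\cdots,x_{j+1},x_j,\cdots,x_{n-1},x_n\}_\Phi$ equals $-(-1)^{|x_j||x_{j+1}|}$ times the corresponding summand of $\{x_1,\cdots,x_j,x_{j+1},\cdots,x_{n-1},x_n\}_\Phi$, which yields the desired super-skew-symmetry on the first $(n-1)$ variables.
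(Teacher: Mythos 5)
Your proposal is correct and follows essentially the same route as the paper: the paper's proof also checks a single adjacent transposition $x_i\leftrightarrow x_{i+1}$ ($1\le i\le n-2$), splits the sum in \eqref{phi-trace} into the terms $k\notin\{i,i+1\}$ (handled by the super-skew-symmetry of $\Phi$, with the weight $|X|^{n-1}_{k+1}$ unchanged) and the two distinguished terms $k=i$, $k=i+1$ (which exchange roles, with the sign $-(-1)^{|x_i||x_{i+1}|}$ emerging from the index shift $(-1)^{i+1}\leftrightarrow(-1)^{i+2}$ combined with $|X|^{n-1}_{i+1}=|x_{i+1}|+|X|^{n-1}_{i+2}$), and then factors out the common sign. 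Your sign bookkeeping in the delicate step checks out, so the argument is complete as planned.
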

\begin{proof}
Let $x_1,\cdots,x_n\in\mathcal{H}(\mathcal{A})$, then for all $ i\in\{1,\cdots,n-2\}$, we have
{\small\begin{align*}
 \{x_1,\cdots,x_i,x_{i+1},\cdots,x_{n-1},x_n\}_\Phi&=\displaystyle\sum_{1\leq k\neq i,i+1<n-1}  (-1)^{k+1}(-1)^{|x_k||X|^{n-1}_{k+1}}\Phi(x_1,\cdots,x_i,x_{i+1},\cdots,\hat{x_k},\cdots,x_{n-1})(x_k\circ x_n)\\&+(-1)^{i+1}(-1)^{|x_i||X|^{n-1}_{i+1}}\Phi(x_1,\cdots,\hat{x_i},x_{i+1},\cdots,x_{n-1})(x_i\circ x_n)\\&+(-1)^i(-1)^{|x_{i+1}||X|^{n-1}_{i+2}}\Phi(x_1,\cdots,x_i,\widehat{x_{i+1}},\cdots,x_{n-1})(x_i\circ x_n)\\&=-(-1)^{|x_i||x_{i+1}|}\Big(\displaystyle\sum_{1\leq k\neq i,i+1<n-1}  (-1)^{k+1}(-1)^{|x_k||X|^{n-1}_{k+1}}\Phi(x_1,\cdots,x_{i+1},x_i,\cdots,\hat{x_k},\cdots,x_{n-1})(x_k\circ x_n)\\&+(-1)^i(-1)^{|x_i||X|^{n-1}_{i+2}}\Phi(x_1,\cdots,\hat{x_i},x_{i+1},\cdots,x_{n-1})(_{i+1}\circ x_n)\\&+(-1)^{i+1}(-1)^{|x_i||X|^{n-1}_{i+1}}\Phi(x_1,\cdots,x_i,\widehat{x_{i+1}},\cdots,x_{n-1})_{i+1}\circ x_n)\Big)\\&= -(-1)^{|x_i||x_{i+1}|} \{x_1,\cdots,x_{i+1},x_i,\cdots,x_{n-1},x_n\}_\Phi,
\end{align*}}
which gives that $\{\cdot,\cdots,\cdot\}_\Phi$ is super-skew-symmetric on the first $(n-1)$ terms.
\end{proof}
\begin{thm}\label{n_Hom-pre-super-ind-Hom-pre-super}
Let $(\mathcal{A},\circ,\alpha)$ be a Hom-pre-Lie superalgebra and $\Phi:\mathcal{A}\times\cdots\times\mathcal{A}\to\mathbb{K}$ be an even $(n-2)$-linear super-skew-symmetric form. Then  $(\mathcal{A},\{\cdot,\cdots,\cdot\}_\Phi,\alpha)$ is an $n$-Hom-pre-Lie superalgebra if and only if: 
\begin{align}
 \Phi(x_1\cdots,x_i,y\circ z,x_{i+1},\cdots,x_{n-3})&=0,\;\forall x_i,y,z\in\mathcal{H}(\mathcal{A}),\;1\leq i\leq n-3,\label{cond-induced-1}\\
 \Phi\circ(\alpha\otimes Id \otimes\cdots\otimes Id)&=\Phi,\label{cond-induced-2}\\
 \Phi\wedge\delta\Phi_X&=0,\;\forall X=(x_1,\cdots,x_{n-3})\in\wedge^{n-3}\mathcal{H}(\mathcal{A}),\label{cond-induced-3}
\end{align}
where $$\Phi\wedge\delta\Phi_X(Y)=\displaystyle\sum_{k=1}^{n-1}(-1)^{k+1}(-1)^{|y_k||Y|^{n-1}_{k+1}}\Phi(y_1,\cdots,\hat{y_k},\cdots,y_{n-1})\Phi(X,y_k),\;\forall Y=(y_1,\cdots,y_{n-1})\in\wedge^{n-1}\mathcal{H}(\mathcal{A}),$$
and $\{\cdot,\cdots,\cdot\}_\Phi$ is defined by Eq. \eqref{phi-trace}. We shall say that $(\mathcal{A},\{\cdot,\cdots,\cdot\}_\Phi,\alpha)$ is induced by $(\mathcal{A},\circ,\alpha)$.
\end{thm}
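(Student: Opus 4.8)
The plan is to check the two structure identities \eqref{n-Hom-pre-Lie-sup 1} and \eqref{n-Hom-pre-Lie-sup 2} for $\{\cdot,\cdots,\cdot\}_\Phi$, since super-skew-symmetry on the first $(n-1)$ slots is already granted by the previous proposition. The first step is to make the subadjacent bracket explicit: inserting \eqref{phi-trace} into \eqref{eq:ncc} yields a double sum over the deleted index $i$ and the index $k$ placed to the left of $\circ$, the remaining $n-2$ indices filling $\Phi$. Grouping the contribution of an unordered pair $\{a,b\}$, the two terms $x_a\circ x_b$ and $x_b\circ x_a$ combine, by super-skew-symmetry of $\Phi$ and a short sign check, into the commutator $x_a\circ x_b-(-1)^{|x_a||x_b|}x_b\circ x_a$ of the underlying Hom-pre-Lie superalgebra. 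Hence $[\cdot,\cdots,\cdot]^C$ is a $\Phi$-weighted sum of these binary commutators over pairs $a<b$ -- the super-Hom analogue of the trace construction of the Lie case in \cite{Mabrouk-Ncib}. This closed form is what makes the substitution into \eqref{n-Hom-pre-Lie-sup 1}-\eqref{n-Hom-pre-Lie-sup 2} manageable.

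Next I would substitute this form of $[\cdot,\cdots,\cdot]^C$ together with \eqref{phi-trace} into both sides of each identity and sort the resulting terms by how many factors of $\Phi$ they carry. The terms carrying a single $\Phi$ but a nested $\circ$-product, such as $\alpha(x_k)\circ(y_j\circ y_n)$, are handled by the Hom-pre-Lie identity \eqref{hom-pre-lie}: rewriting $\alpha(x)\circ(y\circ z)$ through $\mathfrak{ass}$ aligns the associator discrepancy between the two sides. What remains are, on the one hand, scalars $\Phi(\cdots)$ evaluated on a slot that has become a commutator $[a,b]$ (hence a $\circ$-product) -- forcing these to vanish is exactly condition \eqref{cond-induced-1} -- and, on the other hand, a mismatch between $\Phi$ read on $\alpha$-twisted and on untwisted arguments, absorbed precisely by condition \eqref{cond-induced-2}. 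Here I would record the small lemma that first-slot invariance \eqref{cond-induced-2} together with super-skew-symmetry of $\Phi$ upgrades (by swap--absorb--swap, since $\alpha$ is even) to $\Phi(\alpha(x_1),\cdots,\alpha(x_{n-2}))=\Phi(x_1,\cdots,x_{n-2})$; this all-slot invariance is what keeps both axioms symmetric under the reindexings produced by the outer bracket.

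The quadratic-in-$\Phi$ terms are the crux. Composing $\{\cdot,\cdots,\cdot\}_\Phi$ with itself produces genuine products $\Phi(\cdots)\,\Phi(\cdots)$, and once \eqref{cond-induced-1}-\eqref{cond-induced-2} have cleared the associator and twisting contributions, these scalar coefficients must be collected slot by slot and matched against the combination $\Phi\wedge\delta\Phi_X$ of the statement. I expect this matching to be the main obstacle: one must carry the Koszul signs arising from permuting homogeneous elements through both the inner and the outer $\Phi$, check that the free index $k$ of the inner product reproduces exactly the alternating sum $\sum_k(-1)^{k+1}(-1)^{|y_k||Y|^{n-1}_{k+1}}\Phi(y_1,\cdots,\hat{y_k},\cdots,y_{n-1})\Phi(X,y_k)$, and confirm that every surviving term is proportional to this expression, so that its vanishing is equivalent to \eqref{cond-induced-3}.

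Finally, assembling the three families gives both directions simultaneously, since the difference of the two sides of each axiom is thereby organized into three packets proportional respectively to \eqref{cond-induced-1}-, \eqref{cond-induced-2}- and \eqref{cond-induced-3}-type expressions. For the ``if'' part one reads the computation forward: \eqref{cond-induced-1} kills the $\Phi$-on-products packet, \eqref{cond-induced-2} (in its all-slot form) reconciles the twisted and untwisted evaluations, and \eqref{cond-induced-3} annihilates the quadratic remainder, so \eqref{n-Hom-pre-Lie-sup 1}-\eqref{n-Hom-pre-Lie-sup 2} hold. For the ``only if'' part, the axioms hold for all homogeneous $x_i,y_j$ while the three packets depend on these arguments through independent data (the product $\circ$, the scalar values of $\Phi$, and the twist $\alpha$); choosing the arguments so that all but one packet is made to survive forces each family to vanish separately, which is precisely the three stated conditions.
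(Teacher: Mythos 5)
Your proposal follows essentially the same route as the paper's proof: expand both sides of \eqref{n-Hom-pre-Lie-sup 1}--\eqref{n-Hom-pre-Lie-sup 2} via \eqref{phi-trace}, use \eqref{cond-induced-1} to kill the terms where $\Phi$ is evaluated on a $\circ$-product or a bracket (the paper isolates this as the auxiliary lemma \eqref{cond-induced-1-equiv}), use \eqref{cond-induced-2} to strip the $\alpha$-twists from $\Phi$'s arguments (the paper uses it implicitly in exactly your upgraded all-slot form), cancel the quadratic-in-$\Phi$ packet by \eqref{cond-induced-3} (the paper's step $N'_1=0$), and reduce the remainder to $\mathfrak{ass}(x_i,y_j,y_n)-(-1)^{|x_i||y_j|}\mathfrak{ass}(y_j,x_i,y_n)=0$ from \eqref{hom-pre-lie}. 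The one caveat is that your ``only if'' direction rests on an unjustified independence-of-packets claim, but the paper itself verifies only the ``if'' direction in detail, so on the substance actually proved you and the paper coincide.
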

\begin{lem}
If an even super-skew-symmetric $(n-2)$-linear form $\Phi:\mathcal{A}\times\cdots\times\mathcal{A}\to\mathbb{K}$ satisfies condition \eqref{cond-induced-1}, then it satisfies the following condition
\begin{equation}\label{cond-induced-1-equiv}
\Phi(x_1,\cdots,x_i,\{y_1,\cdots,y_n\}_\Phi,x_{i+1},\cdots,x_{n-3})=0,    
\end{equation}
for all $x_i,y_j\in\mathcal{H}(\mathcal{A}),\;(i,j)\in\{1,\cdots,n-3\}\times\{1,\cdots,n\}.$
\end{lem}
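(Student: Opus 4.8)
The plan is to substitute the defining formula \eqref{phi-trace} for $\{y_1,\cdots,y_n\}_\Phi$ into the distinguished slot of $\Phi$ and then exploit the multilinearity of $\Phi$ to reduce everything to the hypothesis \eqref{cond-induced-1}. Concretely, I would begin by recalling that, by definition,
\begin{equation*}
\{y_1,\cdots,y_n\}_\Phi=\sum_{k=1}^{n-1}(-1)^{k+1}(-1)^{|y_k||Y|^{n-1}_{k+1}}\Phi(y_1,\cdots,\hat{y_k},\cdots,y_{n-1})\,(y_k\circ y_n),
\end{equation*}
so that $\{y_1,\cdots,y_n\}_\Phi$ is a $\mathbb{K}$-linear combination of the homogeneous products $y_k\circ y_n$, the scalar coefficients being the values $\Phi(y_1,\cdots,\hat{y_k},\cdots,y_{n-1})$.

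Next, since $\Phi$ is an $(n-2)$-linear form, it is in particular linear in the argument occupying the slot into which we have inserted $\{y_1,\cdots,y_n\}_\Phi$. I would therefore pull the sum and the scalar coefficients outside $\Phi$, obtaining
\begin{equation*}
\Phi(x_1,\cdots,x_i,\{y_1,\cdots,y_n\}_\Phi,x_{i+1},\cdots,x_{n-3})=\sum_{k=1}^{n-1}(-1)^{k+1}(-1)^{|y_k||Y|^{n-1}_{k+1}}\Phi(y_1,\cdots,\hat{y_k},\cdots,y_{n-1})\,\Phi(x_1,\cdots,x_i,y_k\circ y_n,x_{i+1},\cdots,x_{n-3}).
\end{equation*}

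The crucial observation is that every summand now contains the factor $\Phi(x_1,\cdots,x_i,y_k\circ y_n,x_{i+1},\cdots,x_{n-3})$, which is precisely $\Phi$ evaluated with the product $y_k\circ y_n$ sitting in one of its entries. By hypothesis \eqref{cond-induced-1} each such factor vanishes, hence every term of the sum is zero and the whole expression vanishes, which is exactly \eqref{cond-induced-1-equiv}.

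As for the main obstacle, there is essentially none of substance here, and I expect the only point requiring attention to be purely bookkeeping, namely checking that the grading signs $(-1)^{k+1}(-1)^{|y_k||Y|^{n-1}_{k+1}}$ and the parity of the inserted element are handled correctly when invoking multilinearity. These signs are, however, irrelevant to the conclusion: since each individual summand is already annihilated by \eqref{cond-induced-1}, the precise coefficients merely multiply zero, so no sign-cancellation argument is needed and the result follows term by term.
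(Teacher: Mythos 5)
Your proof is correct and follows exactly the route the paper intends: the paper's proof is just the one-line remark ``this is a direct computation, by using the expression of $\{\cdot,\cdots,\cdot\}_\Phi$'', and your argument---substitute \eqref{phi-trace} into the distinguished slot, use linearity of $\Phi$ there to pull out the scalar coefficients $\Phi(y_1,\cdots,\hat{y_k},\cdots,y_{n-1})$, and annihilate each summand via \eqref{cond-induced-1} applied to $y_k\circ y_n$---is precisely that computation spelled out. Your closing observation that the grading signs are immaterial, since every term vanishes individually, is also accurate.
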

\begin{proof}
This is a direct computation, by using the expression of $\{\cdot,\cdots,\cdot\}_\Phi$.
\end{proof}

\begin{proof}[Proof of Theorem \ref{n_Hom-pre-super-ind-Hom-pre-super}]

Let $x_i,y_i\in\mathcal{H}(\mathcal{A}),\;1\leq i\leq n$.\\

On the one hand, we have:
\begin{align*}
M&=\{\alpha(x_1),\cdots,\alpha(x_{n-1}),\{y_1,\cdots,y_n\}_\Phi\}_\Phi\\&=\displaystyle\sum_{i=1}^{n-1}(-1)^{i+1}(-1)^{|x_i||X|^{n-1}_{i+1}}\Phi(\alpha(x_1),\cdots,\widehat{\alpha(x_i)},\cdots,\alpha(x_{n-1}))(\alpha(x_i)\circ\{y_1,\cdots,y_n\}_\Phi)\\&=\displaystyle\sum_{i=1}^{n-1}\displaystyle\sum_{j=1}^{n-1}(-1)^{i+j}(-1)^{|x_i||X|^{n-1}_{i+1}+|y_j||Y|_{j+1}^{n-1}}\Phi(\alpha(x_1),\cdots,\widehat{\alpha(x_i)},\cdots,\alpha(x_{n-1}))\Phi(y_1,\cdots,\hat{y_j},\cdots,y_{n-1})(\alpha(x_i)\circ(y_j\circ y_n)).
\end{align*}
On the other hand, we have:
\begin{align*}
N&=\displaystyle\sum_{j=1}^{n-1}(-1)^{|X|^{n-1}|Y|^{j-1}}\{\alpha(y_1),\cdots,\alpha(y_{j-1}),[x_1,\cdots,x_{n-1},y_j]^C_\Phi,\alpha(y_{j+1}),\cdots,\alpha(y_n)\}_\Phi\\&+(-1)^{|X|^{n-1}|Y|^{n-1}}\{\alpha(y_1),\cdots,\alpha(y_{n-1}),\{x_1,\cdots,x_{n-1},y_n\}_\Phi\}_\Phi\\&=\displaystyle\sum_{j=1}^{n-1}\displaystyle\sum_{i=1}^{n-1}(-1)^{n-i}(-1)^{|X|^{n-1}|Y|^{j-1}}(-1)^{|x_i|(|X|^{n-1}_{i+1}+|y_j|)}\{\alpha(y_1),\cdots,\alpha(y_{j-1}),\{x_1,\cdots,\hat{x_i},\cdots,x_{n-1},y_j,x_i\}_\Phi,\alpha(y_{j+1}),\cdots,\alpha(y_n)\}_\Phi\\&+\displaystyle\sum_{j=1}^{n-1}(-1)^{|X|^{n-1}|Y|^{j-1}}\{\alpha(y_1),\cdots,\alpha(y_{j-1}),\{x_1,\cdots,x_{n-1},y_j\}_\Phi,\alpha(y_{j+1}),\cdots,\alpha(y_n)\}_\Phi\\&+(-1)^{|X|^{n-1}|Y|^{n-1}}\{\alpha(y_1),\cdots,\alpha(y_{n-1}),\{x_1,\cdots,x_{n-1},y_n\}_\Phi\}_\Phi\\&=N_1+N_2+N_3,    
\end{align*}
where
{\small\begin{align*}
 N_1&=\displaystyle\sum_{j=1}^{n-1}\displaystyle\sum_{i=1}^{n-1}(-1)^{n-i}(-1)^{|X|^{n-1}|Y|^{j-1}}(-1)^{|x_i|(|X|^{n-1}_{i+1}+|y_j|)}\{\alpha(y_1),\cdots,\alpha(y_{j-1}),\{x_1,\cdots,\hat{x_i},\cdots,x_{n-1},y_k,x_i\}_\Phi,\alpha(y_{j+1}),\cdots,\alpha(y_n)\}_\Phi\\&=\displaystyle\sum_{j=1}^{n-1}\displaystyle\sum_{i=1}^{n-1}\displaystyle\sum_{k=1}^{j-1}(-1)^{n-i+k+1}(-1)^{\gamma_{ij}}\Phi(\alpha(y_1),\cdots,\widehat{\alpha(y_k)},\cdots,\alpha(y_{j-1}),\{x_1,\cdots,\hat{x_i},\cdots,x_{n-1},y_j,x_i\}_\Phi,\cdots,\alpha(y_{n-1}))(\alpha(y_k)\circ\alpha(y_n))\\&+\displaystyle\sum_{j=1}^{n-1}\displaystyle\sum_{i=1}^{n-1}\displaystyle\sum_{k=j+1}^{n-1}(-1)^{n-i+k+1}(-1)^{\gamma_{ij}+|y_k||X|^{n-1}}\Phi(\alpha(y_1),\cdots,\alpha(y_{j-1}),\{x_1,\cdots,\hat{x_i},\cdots,x_{n-1},y_j,x_i\}_\Phi,\cdots,\widehat{\alpha(y_k)},\cdots,\alpha(y_{n-1}))(\alpha(y_k)\circ\alpha(y_n))\\&+ \displaystyle\sum_{j=1}^{n-1}\displaystyle\sum_{i=1}^{n-1}(-1)^{n-i+j+1}(-1)^{\theta_{ij}}\Phi(\alpha(y_1),\cdots,\widehat{\alpha(y_j)},\cdots,\alpha(y_{n-1}))(\{x_1,\cdots,\hat{x_i},\cdots,x_{n-1},y_j,x_i\}_\Phi\circ\alpha(y_n))
\end{align*}}
where $$\gamma_{ij}=|X|^{n-1}|Y|^{j-1}+|x_i|(|X|^{n-1}_{i+1}+|y_j|)+|y_k|(|X|^{n-1}+|Y|^{n-1}_{k+1}),$$ and $$\theta_{ij}=|X|^{n-1}|Y|^{j-1}+|x_i|(|X|^{n-1}_{i+1}+|y_j|)+|Y|^{n-1}_{j+1}(|X|^{n-1}+|y_j|).$$\\

Using Eq. \eqref{cond-induced-1-equiv}, we notice that the first two terms of the second equality of $N_1$ are zero, which gives that
{\small\begin{align*}
 N_1&=\displaystyle\sum_{j=1}^{n-1}\displaystyle\sum_{i=1}^{n-1}(-1)^{n-i+j+1}(-1)^{\theta_{ij}}\Phi(\alpha(y_1),\cdots,\widehat{\alpha(y_j)},\cdots,\alpha(y_{n-1}))(\{x_1,\cdots,\hat{x_i},\cdots,x_{n-1},y_j,x_i\}_\Phi\circ\alpha(y_n))\\&=\displaystyle\sum_{j=1}^{n-1}\displaystyle\sum_{i=1}^{n-1}\displaystyle\sum_{\substack{k=1 \\ k\neq i}}^{n-1}(-1)^{n-i+j+1}(-1)^{\theta_{ijk}}\Phi(\alpha(y_1),\cdots,\widehat{\alpha(y_j)},\cdots,\alpha(y_{n-1}))\Phi(x_1,\cdots,\hat{x_k},\cdots,\hat{x_i},\cdots,x_{n-1},y_j)((x_k\circ x_i)\circ\alpha(y_n))\\&+   \displaystyle\sum_{j=1}^{n-1}\displaystyle\sum_{i=1}^{n-1}(-1)^{i+j}(-1)^{\theta_{ij}}\Phi((\alpha(y_1),\cdots,\widehat{\alpha(y_j)},\cdots,\alpha(y_{n-1}))\Phi(x_1,\cdots,\hat{x_i},\cdots,x_{n-1})((y_j\circ x_i)\circ\alpha(y_n))\\&=N'_1+N'_2,
\end{align*}}
where $\theta_{ijk}=\theta_{ij}+|x_k|(|X|^{n-1}_{k+1}+|x_i|+|y_j|)$.\\

By the same way, we have:
\begin{align*}
N_2&=\displaystyle\sum_{j=1}^{n-1}(-1)^{|X|^{n-1}|Y|^{j-1}}\{\alpha(y_1),\cdots,\alpha(y_{j-1}),\{x_1,\cdots,x_{n-1},y_j\}_\Phi,\alpha(y_{j+1}),\cdots,\alpha(y_n)\}_\Phi\\&=\displaystyle\sum_{j=1}^{n-1}(-1)^{j+1}(-1)^{|X|^{n-1}|Y|^{j-1}}(-1)^{|Y|^{n-1}_{j+1}(|X|^{n-1}+|y_j|)}\Phi(\alpha(y_1),\cdots,\widehat{\alpha(y_j)},\cdots,\alpha(y_{n-1}))(\{x_1,\cdots,x_{n-1},y_j\}_\Phi\circ \alpha(y_n))\\&=    \displaystyle\sum_{j=1}^{n-1}\displaystyle\sum_{i=1}^{n-1}(-1)^{i+j}(-1)^{\lambda_{ij}}\Phi(\alpha(y_1),\cdots,\widehat{\alpha(y_j)},\cdots,\alpha(y_{n-1}))\Phi(x_1,\cdots,\hat{x_i},\cdots,x_{n-1})((x_i\circ y_j)\circ\alpha(y_n)),
\end{align*}
where $\lambda_{ij}=|X|^{n-1}|Y|^{j-1}+|Y|^{n-1}_{j+1}(|X|^{n-1}+|y_j|)+|x_i||X|^{n-1}_{i+1}=\theta_{ij}+|x_i||y_j|$,\\

and
\begin{align*}
 N_3&= (-1)^{|X|^{n-1}|Y|^{n-1}}\{\alpha(y_1),\cdots,\alpha(y_{n-1}),\{x_1,\cdots,x_{n-1},y_n\}_\Phi\}_\Phi\\&= \displaystyle\sum_{j=1}^{n-1}(-1)^{j+1}(-1)^{|X|^{n-1}|Y|^{n-1}}(-1)^{|y_j||Y|^{n-1}_{j+1}}\Phi(\alpha(y_1),\cdots,\widehat{\alpha(y_j)},\cdots,\alpha(y_{n-1}))(\alpha(y_j)\circ\{x_1,\cdots,x_{n-1},y_n\}_\Phi)\\&=\displaystyle\sum_{j=1}^{n-1}\displaystyle\sum_{i=1}^{n-1}(-1)^{i+j}(-1)^{\nu_{ij}}\Phi(\alpha(y_1),\cdots,\widehat{\alpha(y_j)},\cdots,\alpha(y_{n-1}))\Phi(x_1,\cdots,\hat{x_i},\cdots,x_{n-1})(\alpha(y_j)\circ(x_i\circ y_n)),
\end{align*}
where $\nu_{ij}=|X|^{n-1}|Y|^{n-1}+|y_j||Y|^{n-1}_{j+1}+|x_i||X|^{n-1}_{i+1}=|x_i||y_j|+|y_j||Y|^{n-1}_{j+1}+|x_i||X|^{n-1}_{i+1}$,  since $\Phi$ is even.\\

If we fixed $i,k$ in the expression of $N'_1$, then , we get:
{\small\begin{align*}
N'_1&=\displaystyle\sum_{j=1}^{n-1}\displaystyle\sum_{i=1}^{n-1}\displaystyle\sum_{\substack{k=1 \\ k\neq i}}^{n-1}(-1)^{n-i+j+1}(-1)^{\theta_{ijk}}\Phi(\alpha(y_1),\cdots,\hat{\alpha(y_j)},\cdots,\alpha(y_{n-1}))\Phi(x_1,\cdots,\hat{x_k},\cdots,\hat{x_i},\cdots,x_{n-1},y_j)((x_k\circ x_i)\circ\alpha(y_n))\\&=\displaystyle\sum_{i=1}^{n-1}\displaystyle\sum_{\substack{k=1 \\ k\neq i}}^{n-1}(-1)^{n-i}\Big(\displaystyle\sum_{j=1}^{n-1}(-1)^{j+1}(-1)^{\theta_{ijk}}\Phi(\alpha(y_1),\cdots,\hat{\alpha(y_j)},\cdots,\alpha(y_{n-1}))\Phi(x_1,\cdots,\hat{x_k},\cdots,\hat{x_i},\cdots,x_{n-1},y_j)\Big)((x_k\circ x_i)\circ\alpha(y_n))\\&=0,
\end{align*}}
this from Eq. \eqref{cond-induced-3} and the fact that $\Phi$ is even. Moreover, if we apply condition \eqref{cond-induced-2}, we find
{\small\begin{align*}
M-N&=M-N'_2-N_2-N_3\\&=\displaystyle\sum_{i=1}^{n-1}\displaystyle\sum_{j=1}^{n-1}(-1)^{i+j}(-1)^{|x_i||X|^{n-1}_{i+1}+|y_j||Y|_{j+1}^{n-1}}\Phi(x_1,\cdots,\widehat{\alpha(x_i)},\cdots,x_{n-1})\Phi(y_1,\cdots,\hat{y_j},\cdots,y_{n-1})\\&\Big(\alpha(x_i)\circ(y_j\circ y_n)-(x_i\circ y_j)\circ\alpha(y_n))-(-1)^{|x_i||y_j|}(\alpha(y_j)\circ(x_i\circ y_n)-(y_j\circ x_i)\circ\alpha(y_n)))\Big)\\&=\displaystyle\sum_{i=1}^{n-1}\displaystyle\sum_{j=1}^{n-1}(-1)^{i+j}(-1)^{|x_i||X|^{n-1}_{i+1}+|y_j||Y|_{j+1}^{n-1}}\Phi(x_1,\cdots,\widehat{\alpha(x_i)},\cdots,x_{n-1})\Phi(y_1,\cdots,\hat{y_j},\cdots,y_{n-1})\Big(\mathfrak{ass}(x_i,y_j,y_n)-(-1)^{|x_i||y_j|}\mathfrak{ass}(y_j,x_i,y_n)\Big)\\&=0,
\end{align*}}
The last equality follows from the fact that $(\mathcal{A},\circ,\alpha)$ is a Hom-pre-Lie superalgebra. Then $\{\cdot,\cdots,\cdot\}_\Phi$ satisfies condition \eqref{n-Hom-pre-Lie-sup 1} on $\mathcal{A}$. Similarly, we show that $\{\cdot,\cdots,\cdot\}_\Phi$ satisfies condition \eqref{n-Hom-pre-Lie-sup 2} on $\mathcal{A}$. The theorem is proved.
\end{proof}
Let us given a Hom-pre-Lie superalgebra $(\mathcal{A},\circ,\alpha)$ and an even bilinear form $\Phi:\mathcal{A}\times\mathcal{A}\to\mathbb{K}$ satisfying conditions \eqref{cond-induced-1}, \eqref{cond-induced-2} and \eqref{cond-induced-3}. Then by Theorem \ref{n_Hom-pre-super-ind-Hom-pre-super}, the triple $(\mathcal{A},\{\cdot,\cdot,\cdot,\cdot\}_\Phi,\alpha)$ is a $4$-Hom-pre-Lie superalgebra.

\begin{ex}
Let $\mathcal{A}=\mathcal{A}_{\overline{0}}\oplus\mathcal{A}_{\overline{1}}$ be a two dimensional $\mathbb{Z}_2$-vector space with a basis $\{e_1,e_2\}$, where $\mathcal{A}_{\overline{0}}=<e_1>$ and $\mathcal{A}_{\overline{1}}=<e_2>$. Define on the basis of $\mathcal{A}$ the even bilinear map $\circ:\mathcal{A}\times\mathcal{A}\to\mathcal{A}$ by: 
$$e_2\circ e_2=e_1,$$
and the linear map $\alpha:\mathcal{A}\to\mathcal{A}$ by:
$$\alpha(e_1)=0,\;\;\;\alpha(e_2)=e_2.$$
Then $(\mathcal{A},\circ,\alpha)$ is a Hom-pre-Lie superalgebra.\\

Now, we define the super-skew-symmetric bilinear form $\Phi:\mathcal{A}\times\mathcal{A}\to\mathbb{K}$ by
$$\Phi(e_2,e_2)=\lambda,\;\lambda\in\mathbb{K}.$$ It is obvious that $\Phi$ satisfies the conditions \eqref{cond-induced-1}-\eqref{cond-induced-3}. Then, by Theorem \ref{n_Hom-pre-super-ind-Hom-pre-super}, the triple $(\mathcal{A},\{\cdot,\cdot,\cdot,\cdot\}_\Phi,\alpha)$ is a $4$-Hom-pre-Lie superalgebra, where $\{\cdot,\cdot,\cdot,\cdot\}_\Phi$ defined on the basis of $\mathcal{A}$ by $$\{e_2,e_2,e_2,e_2\}_\Phi=\lambda e_1.$$
\end{ex}

Quite normal and thanks to the importance of the representation theory, any reader asks the following question: Can we also extend this work to representation, i.e. is it possible to construct an $n$-Hom-pre-Lie superalgebra representation from a Hom-pre-Lie superalgebra representation? The answer is yes, but the question requires us to define the representation of a Hom-pre-Lie superalgebra $(\mathcal{A},\circ,\alpha)$ which is defined as a quadruple $(V,l,r,\alpha_V)$ consisting of a $\mathbb{Z}_2$-vector space $V$, two even linear maps $l,r:\mathcal{A}\to gl(V)$ and an even linear map $\alpha_V:V\to V$ such that the following conditions hold:
\begin{align}
 \alpha_V l(x)&=l(\alpha(x))\alpha_V,\;\forall x\in\mathcal{H}(\mathcal{A}),\label{cond-rep-Hom-pre-Lie1}\\
 \alpha_V r(x)&=r(\alpha(x))\alpha_V,\;\forall x\in\mathcal{H}(\mathcal{A}),\label{cond-rep-Hom-pre-Lie2}\\
 l([x,y]^C)\alpha_V&=l(\alpha(x))l(y)-(-1)^{|x||y|}l(\alpha(y))l(x),\;\forall x,y\in\mathcal{H}(\mathcal{A}),\label{cond-rep-Hom-pre-Lie3}\\
 r(\alpha(y))r(x)-r(x\circ y)\alpha_V&=r(\alpha(y))l(x)-(-1)^{|x||y|}r(\alpha(x))l(y),\;\forall x,y\in\mathcal{H}(\mathcal{A}),\label{cond-rep-Hom-pre-Lie4}
\end{align}
where $[x,y]^C=x\circ y-(-1)^{|x||y|}y\circ x,\;\forall x,y\in\mathcal{H}(\mathcal{A})$, which is defined a Hom-Lie superalgebra on $\mathcal{A}$ called the subadjacent Hom-Lie superalgebra of $(\mathcal{A},\circ,\alpha)$. 
\begin{rmk}
Conditions \eqref{cond-rep-Hom-pre-Lie1} and \eqref{cond-rep-Hom-pre-Lie3} are equivalent to saying that $l$ is a representation of the Hom-Lie superalgebra $(\mathcal{A},[\cdot,\cdot]^C,\alpha)$ with respect to $\alpha_V$.
\end{rmk}

In the sequel, we allow to answer the previous question which is summarized by the following proposition.
\begin{pro}\label{repr-induite}
Let $(V,l,r,\alpha_V)$ be a representation of a Hom-pre-Lie superalgebra $(\mathcal{A},\circ,\alpha)$ and $\Phi$ be an even super-skew-symmetric $(n-2)$-linear form satisfying conditions \eqref{cond-induced-1}-\eqref{cond-induced-3}. Then $(V,l_\Phi,r_\Phi,\alpha_V)$ is a representation of the $n$-Hom-pre-Lie superalgebra $(\mathcal{A},\{\cdot,\cdots,\cdot\}_\Phi,\alpha)$, where $l_\Phi,r_\Phi:\mathcal{A}\times\cdots\times\mathcal{A} \rightarrow End(V)$ are two even $(n-1)$-linear maps defined by
\begin{align}
 l_\Phi(x_1,\cdots,x_{n-1})&=\displaystyle\sum_{k=1}^{n-1}(-1)^{k+1}(-1)^{|x_k||X|^{n-1}_{k+1}}\Phi(x_1,\cdots,\hat{x_k},\cdots,x_{n-1})l(x_k),\label{repr-induce-l}\\  
 r_\Phi(x_1,\cdots,x_{n-2},x_{n-1})&=\Phi(x_1,\cdots,x_{n-2})r(x_{n-1}),\label{repr-induce-r}
\end{align}
for al $x_k\in\mathcal{H}(\mathcal{A}),\;1\leq k\leq n-1$.
\end{pro}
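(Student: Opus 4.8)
The plan is to avoid verifying the five representation axioms \eqref{cond-repres-r-1}--\eqref{cond-repres-l-r-4} of Definition~\ref{defrep} one at a time, and instead to bootstrap the result from Theorem~\ref{n_Hom-pre-super-ind-Hom-pre-super} through the semidirect product characterization of Proposition~\ref{carpre}. By Proposition~\ref{carpre}, the quadruple $(V,l_\Phi,r_\Phi,\alpha_V)$ is a representation of $(\mathcal{A},\{\cdot,\cdots,\cdot\}_\Phi,\alpha)$ if and only if the bracket $\{\cdot,\cdots,\cdot\}_{\mathcal{A}\ltimes_{l_\Phi,r_\Phi}^{\alpha_V}V}$ defined by \eqref{eq:sum} is an $n$-Hom-pre-Lie superalgebra structure on $\mathcal{A}\oplus V$. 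The strategy is to exhibit this very structure as one induced, via $\Phi$, from an auxiliary binary Hom-pre-Lie superalgebra, so that Theorem~\ref{n_Hom-pre-super-ind-Hom-pre-super} applies directly.

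First I would form the binary semidirect product Hom-pre-Lie superalgebra $\mathcal{B}:=\mathcal{A}\ltimes_{l,r}^{\alpha_V}V$ attached to the given representation $(V,l,r,\alpha_V)$: exactly as in the binary ($n=2$) case of Proposition~\ref{carpre}, conditions \eqref{cond-rep-Hom-pre-Lie1}--\eqref{cond-rep-Hom-pre-Lie4} guarantee that
\[
(x+u)\circ_{\mathcal{B}}(y+v)=x\circ y+l(x)v+(-1)^{|x||y|}r(y)u,\qquad \alpha_{\mathcal{B}}=\alpha+\alpha_V,
\]
is a Hom-pre-Lie superalgebra on $\mathcal{B}=\mathcal{A}\oplus V$. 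Next I would extend $\Phi$ to an even super-skew-symmetric $(n-2)$-linear form $\widetilde{\Phi}$ on $\mathcal{B}$ by $\widetilde{\Phi}(x_1+u_1,\cdots,x_{n-2}+u_{n-2})=\Phi(x_1,\cdots,x_{n-2})$, i.e. by letting it depend only on the $\mathcal{A}$-components. One then checks that $\widetilde{\Phi}$ inherits conditions \eqref{cond-induced-1}--\eqref{cond-induced-3}: \eqref{cond-induced-1} holds because the $\mathcal{A}$-component of $w_1\circ_{\mathcal{B}}w_2$ is the product of the $\mathcal{A}$-components, so it reduces to \eqref{cond-induced-1} for $\Phi$; \eqref{cond-induced-2} holds because $\alpha_{\mathcal{B}}$ acts by $\alpha$ on the first $\mathcal{A}$-slot; and \eqref{cond-induced-3} reduces to the same identity for $\Phi$ on the $\mathcal{A}$-components since $\widetilde{\Phi}$ ignores all $V$-entries. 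Theorem~\ref{n_Hom-pre-super-ind-Hom-pre-super} then yields that $(\mathcal{B},\{\cdot,\cdots,\cdot\}_{\widetilde{\Phi}},\alpha_{\mathcal{B}})$ is an $n$-Hom-pre-Lie superalgebra.

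The decisive step is to identify the two brackets. Writing $z_i=x_i+u_i$ and using $z_k\circ_{\mathcal{B}}z_n=x_k\circ x_n+l(x_k)u_n+(-1)^{|x_k||x_n|}r(x_n)u_k$ together with $\widetilde{\Phi}(z_1,\cdots,\widehat{z_k},\cdots,z_{n-1})=\Phi(x_1,\cdots,\widehat{x_k},\cdots,x_{n-1})$, the definition \eqref{phi-trace} of $\{\cdot,\cdots,\cdot\}_{\widetilde{\Phi}}$ splits into three sums. The $\mathcal{A}$-valued sum is exactly $\{x_1,\cdots,x_n\}_\Phi$; the $l$-sum is $l_\Phi(x_1,\cdots,x_{n-1})u_n$ by \eqref{repr-induce-l}; and in the $r$-sum the extra sign $(-1)^{|x_k||x_n|}$ coming from $\circ_{\mathcal{B}}$ promotes the exponent $|x_k||X|^{n-1}_{k+1}$ to $|x_k||X|^{n}_{k+1}$, while $\Phi(x_1,\cdots,\widehat{x_k},\cdots,x_{n-1})r(x_n)=r_\Phi(x_1,\cdots,\widehat{x_k},\cdots,x_n)$ by \eqref{repr-induce-r}. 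Hence
\[
\{z_1,\cdots,z_n\}_{\widetilde{\Phi}}=\{z_1,\cdots,z_n\}_{\mathcal{A}\ltimes_{l_\Phi,r_\Phi}^{\alpha_V}V},
\]
so the semidirect product structure of Proposition~\ref{carpre} coincides with the induced structure of Theorem~\ref{n_Hom-pre-super-ind-Hom-pre-super}, and is therefore $n$-Hom-pre-Lie. By Proposition~\ref{carpre}, $(V,l_\Phi,r_\Phi,\alpha_V)$ is then a representation of $(\mathcal{A},\{\cdot,\cdots,\cdot\}_\Phi,\alpha)$.

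I expect the main obstacle to be precisely this sign bookkeeping in the bracket identification: one must confirm that the Koszul sign produced by the right multiplication in $\circ_{\mathcal{B}}$ is exactly what is needed to turn the partial degree $|X|^{n-1}_{k+1}$ appearing in \eqref{phi-trace} into the full degree $|X|^{n}_{k+1}$ demanded by \eqref{eq:sum}, and that $r_\Phi$ as defined by \eqref{repr-induce-r} reads off the last slot of its argument correctly. A secondary, more routine point is to make the homogeneity convention $|x_i+u_i|=|x_i|=|u_i|$ explicit, so that the signs in Proposition~\ref{carpre} and in \eqref{phi-trace} match verbatim.
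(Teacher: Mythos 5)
Your proposal is correct and is essentially the paper's own proof: the authors likewise extend $\Phi$ to a form $\Phi_{\mathcal{A}\oplus V}$ depending only on the $\mathcal{A}$-components of the binary semidirect product $\mathcal{A}\ltimes_{l,r}^{\alpha_V}V$, invoke Theorem \ref{n_Hom-pre-super-ind-Hom-pre-super} to get an $n$-Hom-pre-Lie structure on $\mathcal{A}\oplus V$, identify that bracket with the semidirect-product formula \eqref{eq:sum} for $(l_\Phi,r_\Phi)$, and conclude by Proposition \ref{carpre}. Your sign bookkeeping is in fact slightly more careful than the paper's: the Koszul factor $(-1)^{|x_k||x_n|}$ from $\circ_{\mathcal{B}}$ indeed promotes the exponent $|x_k||X|^{n-1}_{k+1}$ of \eqref{phi-trace} to the $|x_k||X|^{n}_{k+1}$ required by \eqref{eq:sum}, a factor the paper's final display silently drops.
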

\begin{proof}
Let $(V,l,r,\alpha_V)$ be a representation of  $(\mathcal{A},\circ,\alpha)$. Defining the map $\Phi_{\mathcal{A}\oplus V}:\mathcal{A}\oplus V\times\cdots\times\mathcal{A}\oplus V\to\mathbb{K}$ by 
$$\Phi_{\mathcal{A}\oplus V}(x_1+u_1,\cdots,x_{n-2}+u_{n-2})=\Phi(x_1,\cdots,x_{n-2}),\;x_i\in\mathcal{H}(\mathcal{A}),\;u_i\in\mathcal{H}(V),\;1\leq i\leq n-2.$$ Then $\Phi_{\mathcal{A}\oplus V}$ satisfying the conditions \eqref{cond-induced-1}-\eqref{cond-induced-3} on the semi-direct product $n$-Hom-pre-Lie superalgebra  $\mathcal{A}\ltimes_{l,r}^{\alpha_V} V$. Then by \eqref{phi-trace}, we have an $n$-Hom-pre-Lie superalgebra structure on $\mathcal{A}\oplus V$ given by 
{\small\begin{align*}
 \{x_1+u_1,\cdots,x_n+u_n\}_{\Phi_{\mathcal{A}\oplus V}}&=\displaystyle\sum_{k=1}^{n-1}(-1)^{k+1}(-1)^{|x_k||X|^{n-1}_{k+1}}\Phi_{\mathcal{A}\oplus V}(x_1+u_1,\cdots,\widehat{x_k+u_k},\cdots,x_{n-1}+u_{n-1}) \big((x_k+u_k)\circ_{\mathcal{A}\oplus V}(x_n+u_n)\big)\\&=\displaystyle\sum_{k=1}^{n-1}(-1)^{k+1}(-1)^{|x_k||X|^{n-1}_{k+1}}\Phi(x_1,\cdots,\hat{x_k},\cdots,x_{n-1}) \Big(x_k\circ x_n+l(x_k)(u_n)+(-1)^{|x_k||x_n|}r(x_n)(u_k)\Big)\\&=\{x_1,\cdots,x_n\}_\Phi+l_\Phi(x_1,\cdots,x_{n-1})(u_n)+\displaystyle\sum_{k=1}^{n-1}(-1)^{k+1}(-1)^{|x_k||X|^{n-1}_{k+1}}r_\Phi(x_1,\cdots,\hat{x_k},\cdots,x_n)(u_k).  
\end{align*}}
By applying Proposition \ref{carpre}, we deduce that $(V,l_\Phi,r_\Phi,\alpha_V)$ is a representation of $(\mathcal{A},\{\cdot,\cdots,\cdot\}_\Phi,\alpha)$.
\end{proof}
\section{Conclusion}
The results of this paper is to introduce the notion of $n$-Hom-pre-Lie superalgebra and their representation which is very important in several theories among them and the most important the cohomology and deformations theories, we also provide some related results and structures based on Rota-Baxter operators, $\mathcal{O}$-operators and Nijenhuis operators. Under some conditions, an $n$-Hom-pre-Lie superalgebra gives rise to an Hom-pre-Lie superalgebra. In the
future, our plan is to study cohomology and deformations of $n$-Hom-pre-Lie superalgebras.

\end{document}